 \numberwithin{equation}{section}
\theoremstyle{plain}
\newtheorem{thm}{Theorem}[section]
\newtheorem{lem}[thm]{Lemma}
\newtheorem{prop}[thm]{Proposition}
\theoremstyle{definition}
\newtheorem{defn}[thm]{Definition}
\newtheorem{ex}[thm]{Example}
\theoremstyle{remark}
\newtheorem{rem}[thm]{Remark}
\newcommand{\R}{\mathbb{R}}
\newcommand{\FF}{\mathcal{F}}
\newcommand{\pv}{\text{P.V.}}
\newcommand{\lap}{\Delta}
\newcommand{\e}{\varepsilon}
\newcommand{\bp}{\begin{proof}[\ensuremath{\mathbf{Proof}}]}
\newcommand{\ep}{\end{proof}}
\begin{document}

\title{Large time behavior for a nonlocal nonlinear gradient flow}

\author{Feng Li and Erik Lindgren} 

\maketitle

\begin{abstract} \noindent 	We study the large time behavior of the nonlinear and nonlocal equation
$$
v_t+(-\lap_p)^sv=f \, ,
$$
where $p\in (1,2)\cup (2,\infty)$, $s\in (0,1)$ and
$$
(-\lap_p)^s v\, (x,t)=2 \,\pv \int_{\R^n}\frac{|v(x,t)-v(x+y,t)|^{p-2}(v(x,t)-v(x+y,t))}{|y|^{n+sp}}\, dy.
$$
This equation arises as a gradient flow in fractional Sobolev spaces. We obtain sharp decay estimates as $t\to\infty$. The proofs are based on an iteration method in the spirit of J. Moser previously used by P. Juutinen and P. Lindqvist.
\end{abstract}

\tableofcontents

\section{Introduction}
We study the large time behavior of solutions of the equation
\begin{equation}\label{MainPDE}
v_t+(-\Delta_p)^s v=f
\end{equation}
where $p\in (1,2)\cup (2,\infty)$, $s\in (0,1)$ and $(-\Delta_p)^s$ is the fractional $p$-Laplacian
\begin{equation}\label{pv}
(-\Delta_p)^s u\, (x):=2\, \pv \int_{\R^n}\frac{|u(x)-u(x+y)|^{p-2}(u(x)-u(x+y))}{|y|^{n+sp}}\, dy.
\end{equation}
Here $\pv$ denotes the \emph{principal value}. The operator $(-\lap_p)^s $ arises as the first variation of the Sobolev-Slobodecki\u{\i} seminorm (see Section \ref{sec:not})
\[
u\mapsto \iint_{\mathbb{R}^n\times \mathbb{R}^n} \frac{|u(x)-u(x)|^p}{|x-y|^{n+s\,p}}\,dx\,dy,
\]
and is a nonlocal (or fractional) version of the $p-$Laplace operator, 
\[
-\Delta_pu=-\mathrm{div\,}(|\nabla u|^{p-2}\nabla u).
\]
In particular, solutions of $(-\Delta_p)^s u =0$ converge to solutions of $-\Delta_p u =0$, as $s$ goes to $1$, if suitably rescaled. For details, see Section 1.4 in \cite{BL15} and \cite{IN10}.

\section{Main result}
Our main result concerns the decay rate of the difference of two solutions for $p\neq 2$. In particular, it implies the sharp convergence rate to the corresponding stationary solution.  In the case $p=2$, the exponential convergence to a stationary solution is well known. The results are different in nature if $p>2$ or $p<2$ and are therefore presented in two different theorems below. For the precise definition of weak solutions and revelant function spaces, see Section \ref{sec:weak}.

\begin{thm}\label{thm:main1}
Let $p>2$, $\Omega$ be a bounded domain and $f\in L^{p^\prime}_{\text{loc}}((0,\infty);(X^{s,p}_0(\Omega,\Omega^\prime))^*)$. Assume that $u$ and $v$ are weak solutions of 
$$
u_t+(-\lap_p)^s u = f\quad \text{ in } \Omega\times (0,\infty),$$
and that $u-v\in L^p_{\text{loc}}((0,\infty);W_0^{s,p}(\Omega))$. Then there is a constant $C=C(\Omega,n,p,s)$ such that 
$$
\sup_{t\geq T}\|v(\cdot,t)-u(\cdot,t)\|_{L^\infty(\Omega)}\leq CT^\frac{-1}{p-2}.
$$
\end{thm}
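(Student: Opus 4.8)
The plan is to study $w:=u-v$, which, subtracting the two equations, is a weak solution of
\[
w_t+\laps u-\laps v=0\quad\text{in }\Omega\times(0,\infty),
\]
with $w(\cdot,t)\in W^{s,p}_0(\Omega)$ for a.e.\ $t$, and then to run a Moser-type iteration on the $L^q(\Omega)$-norms of $w$, starting from an $L^2$-estimate obtained by an elementary differential inequality. Writing $\Phi_r(t):=|t|^{r-2}t$, the algebraic heart of the matter is the inequality: for $p\ge 2$, $q\ge 1$ and real numbers with $a-b=c-d$,
\[
\bigl(\Phi_p(a)-\Phi_p(b)\bigr)\bigl(\Phi_q(c)-\Phi_q(d)\bigr)\ \ge\ C(p,q)\,\Bigl|\,|c|^{\gamma}\sgn(c)-|d|^{\gamma}\sgn(d)\,\Bigr|^{p},\qquad \gamma:=\tfrac{q+p-2}{p},
\]
with $C(p,q)$ at worst polynomial in $q$. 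To see this, note $\Phi_p(a)-\Phi_p(b)=(p-1)\int_b^a|t|^{p-2}\,dt$ has the same sign as $a-b=c-d$ and, since $|t|^{p-2}$ is smallest near $t=0$, one has $|\Phi_p(a)-\Phi_p(b)|\ge 2^{2-p}|c-d|^{p-1}$ irrespective of the position of the interval $[b,a]$; it then remains to prove the one-variable bound $|c-d|^{p-1}|\Phi_q(c)-\Phi_q(d)|\ge C(p,q)\,|\,|c|^\gamma\sgn c-|d|^\gamma\sgn d\,|^p$, which is classical in the $p$-Laplacian literature and reduces to a case analysis on the signs of $c,d$.

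\textbf{Energy identity and $L^2$ decay.} Testing the weak formulation of the $w$-equation with $\Phi_q(w)=|w|^{q-2}w$ (rigorously: first truncate $w$ at height $M$, so that the test function is admissible, apply the identity, then let $M\to\infty$; the time-regularity is handled by Steklov averaging) and inserting the algebraic inequality above gives, for a.e.\ $t$ and every $q\ge 1$,
\[
\frac{1}{q}\frac{d}{dt}\|w(\cdot,t)\|_{L^q(\Omega)}^q\ \le\ -c\,C(p,q)\,\bigl[\,|w(\cdot,t)|^{\gamma_q}\sgn w(\cdot,t)\,\bigr]_{W^{s,p}(\Omega)}^{p},\qquad \gamma_q=\tfrac{q+p-2}{p}.
\]
In particular $t\mapsto\|w(\cdot,t)\|_{L^q(\Omega)}$ is non-increasing for every $q$. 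For $q=2$ one has $\gamma_2=1$, and bounding the Gagliardo seminorm below first by $\|w\|_{L^p(\Omega)}^p$ (fractional Poincar\'e, valid since $\Omega$ is bounded and $w(\cdot,t)\in W^{s,p}_0(\Omega)$) and then by $\|w\|_{L^2(\Omega)}^p$ (H\"older, $p\ge 2$, $\Omega$ bounded) yields $y'\le -c\,y^{p/2}$ for $y:=\|w(\cdot,t)\|_{L^2(\Omega)}^2$; integrating this Bernoulli-type inequality gives $\|w(\cdot,t)\|_{L^2(\Omega)}\le C\,t^{-1/(p-2)}$, already with the sharp exponent.

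\textbf{Moser iteration to $L^\infty$.} Using instead the fractional Sobolev inequality $[\varphi]_{W^{s,p}(\Omega)}\gtrsim\|\varphi\|_{L^{p^\ast_s}(\Omega)}$ with $p^\ast_s=\frac{np}{n-sp}$ (assume $sp<n$; the cases $sp\ge n$ are easier, since then $W^{s,p}_0(\Omega)$ embeds into $L^\infty(\Omega)$ or into all $L^q(\Omega)$), the energy estimate reads $\frac1q\frac{d}{dt}\|w\|_{L^q}^q\lesssim -c_q\,\|w\|_{L^{q'}}^{\gamma_q p}$ with $q'=\gamma_q\,p^\ast_s$ and $c_q=q\,c_S\,C(p,q)$. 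Integrating over a dyadic time interval $[\tau,2\tau]$ and using that $t\mapsto\|w(\cdot,t)\|_{L^{q'}}$ is non-increasing converts a decay bound $\|w(\cdot,\tau)\|_{L^q}\le A_k\,\tau^{-1/(p-2)}$ into $\|w(\cdot,t)\|_{L^{q'}}\le A_{k+1}\,t^{-1/(p-2)}$ with the \emph{same} exponent and $A_{k+1}=2^{1/(p-2)}c_{q_k}^{-1/(\gamma_{q_k}p)}A_k^{\theta_k}$, where $\theta_k=q_k/(q_k+p-2)<1$. Taking $q_0=2$ and $q_{k+1}=\gamma_{q_k}p^\ast_s$, the exponents $q_k$ grow geometrically to $\infty$, so $\prod_k\theta_k$ converges to a positive limit and $\sum_k\bigl(\gamma_{q_k}p\bigr)^{-1}|\log c_{q_k}|<\infty$ because $c_{q_k}$ is polynomial in $q_k$; hence $\sup_k A_k<\infty$, and letting $k\to\infty$ gives $\|w(\cdot,t)\|_{L^\infty(\Omega)}\le C\,t^{-1/(p-2)}$ for all $t>0$. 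Since the right-hand side is decreasing in $t$, taking the supremum over $t\ge T$ yields $\sup_{t\ge T}\|v(\cdot,t)-u(\cdot,t)\|_{L^\infty(\Omega)}\le C\,T^{-1/(p-2)}$.

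\textbf{Main obstacle.} I expect the delicate point to be the bookkeeping in the Moser iteration: keeping the chain of constants $A_k$ bounded while propagating the sharp decay exponent $-1/(p-2)$ unchanged through infinitely many steps, which hinges on the constant $C(p,q)$ in the algebraic inequality growing at most polynomially in $q$. Establishing that algebraic inequality (and justifying the a priori non-admissible test function $|w|^{q-2}w$ by truncation and time-regularization) is the other technical ingredient, but both are essentially known from the $p$-Laplacian literature.
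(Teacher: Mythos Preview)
Your ingredients are correct and match the paper: the algebraic inequality, the truncation/time-regularization of the test function, and the $L^2$-decay via the Bernoulli ODE $(I_2)'\le -cI_2^{p/2}$ are exactly Lemma~B.2/B.3, Lemma~A.1 and Proposition~6.1 (with $m=2$). The gap is in the Moser step.

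In your recursion $A_{k+1}=2^{1/(p-2)}\,c_{q_k}^{-1/(\gamma_{q_k}p)}A_k^{\theta_k}$ you account only for the last two factors; the constant $2^{1/(p-2)}$ produced by the time-halving $t=2\tau$ is forgotten. Since $\theta_k\to 1$ and $(\gamma_{q_k}p)^{-1}\log c_{q_k}\to 0$, the log-recursion reads $\log A_{k+1}-\log A_k=\tfrac{\log 2}{p-2}+o(1)-(1-\theta_k)\log A_k$; if $\log A_k$ stayed bounded, the right-hand side would tend to $\tfrac{\log 2}{p-2}>0$, a contradiction. Hence $A_k\to\infty$ and the iteration does not close. (Equivalently: in the expansion $\log A_K=\sum_{k<K}\bigl(\prod_{k<j<K}\theta_j\bigr)\log D_k+\bigl(\prod_{j<K}\theta_j\bigr)\log A_0$, the tail products $\prod_{j>k}\theta_j$ tend to $1$, so the constant term $\tfrac{\log 2}{p-2}$ in $\log D_k$ makes the series diverge.)

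The paper avoids this by \emph{decoupling} the Moser iteration from the decay rate. Instead of propagating $\|w(\cdot,\tau)\|_{L^{q_k}}\le A_k\tau^{-1/(p-2)}$ for all $\tau$ through infinitely many dyadic intervals, it runs a single space-time iteration on nested intervals $T_k=T(1-2^{-k})\subset[T/2,T]$, with time cutoffs $\eta_k$ and the parabolic Sobolev inequality (combining $[|w|^{\gamma}\sgn w]_{W^{s,p}}$ with $\sup_t\int_\Omega|w|^\alpha$). This yields, with $C$ independent of $T$,
\[
\|w\|_{L^\infty([T,t_2]\times\Omega)}^{\frac{n(p-2)}{sp}+p}\ \le\ C\,T^{-\frac{n+sp}{sp}}\int_{T/2}^{t_2}\!\int_\Omega|w|^p\,dx\,dt,
\]
and only then is the decay $\int_\Omega|w|^p\lesssim t^{-p/(p-2)}$ (Proposition~6.1 with $m=p$) inserted. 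Because all the iteration intervals sit inside $[T/2,T]$, no time-halving factor accumulates. Your scheme can be repaired by taking intervals $[\tau,(1+\delta_k)\tau]$ with $\sum_k\delta_k<\infty$ (e.g.\ $\delta_k=k^{-2}$), since then $\sum_k\log(1+\delta_k)<\infty$ while $\sum_k(q_k+p-2)^{-1}|\log\delta_k|<\infty$; but the fixed dyadic choice $\delta_k\equiv 1$ does not work.
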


\begin{thm}\label{thm:main2}
Let $\max\{1,\frac{2n}{n+2s}\}<p<2$, $\Omega$ be a bounded domain and $f\in L^{p^\prime}_{\text{loc}}((0,\infty);(X^{s,p}_0(\Omega,\Omega^\prime))^*)$. Assume that $u$ and $v$ are weak solutions of 
$$
u_t+(-\lap_p)^s u = f\quad \text{ in } \Omega\times (0,\infty),$$
and that $u-v\in L^p_{\text{loc}}((0,\infty);W_0^{s,p}(\Omega))$ and $u,v\in  L^\infty_{\text{loc}}((0,\infty);W^{s,p}(\R^n))$. Then there are constants $\lambda(\Omega,n,p,s)$ and $C(\Omega,n,p,s)$, such that for all $t>2\tau>0$ there holds
$$
\|v( \cdot,t)-u( \cdot,t)\|_{L^\infty(\Omega)}\leq \frac{C}{(tL)^{\frac{1}{2}+\frac{p\nu}{4(p-\nu)}}}\|v(\cdot,\tau)-u( \cdot,\tau)\|_{L^2(\Omega)}e^{-\lambda L(\frac{t}{2}-\tau)},
$$
where $L=\left(\sup\limits_{\ell\geq\tau}\left([v(\ell, \cdot)]_{W^{s,p}(\mathbb{R}^n)}+[u(\ell,\cdot)]_{W^{s,p}(\mathbb{R}^n)}\right)\right)^{p-2}$ and
\begin{equation}\nonumber
\nu=\left\{\begin{matrix}
\frac{2n}{n+2s}&\quad \text{ if}\ n>sp;\\
\frac{2N}{N+2s}&\quad \text{for any $N>sp$ if}\ sp\geq n.
\end{matrix}
\right.
\end{equation}
\end{thm}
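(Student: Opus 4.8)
The plan is to set $w:=v-u$ and track its decay through a weak energy identity. Since $u$ and $v$ solve the same equation and $w\in L^p_{\mathrm{loc}}((0,\infty);W^{s,p}_0(\Omega))$, subtracting the two weak formulations and testing (after a Steklov regularization in time, then passing to the limit) with $w$ itself gives, for a.e. $t$,
\[
\tfrac12\tfrac{d}{dt}\|w(\cdot,t)\|_{L^2(\Omega)}^2+\mathcal E(t)=0,\qquad \mathcal E(t):=\iint_{\R^n\times\R^n}\frac{\bigl(J_p(v(x)-v(y))-J_p(u(x)-u(y))\bigr)\bigl(w(x)-w(y)\bigr)}{|x-y|^{n+sp}}\,dx\,dy,
\]
where $J_p(\xi):=|\xi|^{p-2}\xi$ and all functions are evaluated at time $t$. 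For $1<p<2$ we use the elementary monotonicity inequality $\bigl(J_p(a)-J_p(b)\bigr)(a-b)\ge (p-1)|a-b|^2(|a|+|b|)^{p-2}$; applying it pointwise with $a=v(x)-v(y)$, $b=u(x)-u(y)$ (so $a-b=w(x)-w(y)$) and then H\"older's inequality with exponents $\tfrac2p$ and $\tfrac2{2-p}$ yields
\[
[w(\cdot,t)]_{W^{s,p}(\R^n)}^{p}\ \le\ C\,\mathcal E(t)^{p/2}\Bigl([v(\cdot,t)]_{W^{s,p}(\R^n)}+[u(\cdot,t)]_{W^{s,p}(\R^n)}\Bigr)^{p(2-p)/2}.
\]
Using $u,v\in L^\infty_{\mathrm{loc}}((0,\infty);W^{s,p}(\R^n))$ to bound the last factor by $L^{-p/2}$ for $t\ge\tau$, this rearranges to $\mathcal E(t)\ge c\,L\,[w(\cdot,t)]_{W^{s,p}(\R^n)}^2$.

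From this the first, exponential half of the estimate follows via the fractional Sobolev inequality. Since $w(\cdot,t)$ is supported in the bounded set $\Omega$, one has $[w(\cdot,t)]_{W^{s,\nu}(\R^n)}\le C(\Omega)[w(\cdot,t)]_{W^{s,p}(\R^n)}$ because $\nu<p$ (this is exactly the hypothesis $p>\tfrac{2n}{n+2s}$ when $n>sp$, and the second alternative for $\nu$ handles the case $sp\ge n$ by first embedding into $W^{s,N}$ with $N>sp$); and $\nu$ is chosen so that its fractional Sobolev conjugate equals $2$, whence $\|w(\cdot,t)\|_{L^2(\Omega)}\le C[w(\cdot,t)]_{W^{s,\nu}(\R^n)}$. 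Combining, $\mathcal E(t)\ge c'L\|w(\cdot,t)\|_{L^2(\Omega)}^2$, so $\tfrac{d}{dt}\|w(\cdot,t)\|_{L^2(\Omega)}^2\le-2\lambda L\|w(\cdot,t)\|_{L^2(\Omega)}^2$ for $t\ge\tau$, and Gr\"onwall's inequality gives $\|w(\cdot,t)\|_{L^2(\Omega)}\le\|w(\cdot,\tau)\|_{L^2(\Omega)}e^{-\lambda L(t-\tau)}$ for $t\ge\tau$.

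The second, polynomial half is an $L^2\to L^\infty$ Moser iteration on the slab $[\tfrac t2,t]$, in the spirit of Juutinen--Lindqvist. For $q=q_k:=2\kappa^{k}$, where $\kappa:=\tfrac{p^\star_s}{2}=\tfrac{np}{2(n-sp)}>1$ (one checks $\kappa>1\Leftrightarrow p>\nu$; when $sp\ge n$ replace $p^\star_s$ by $\tfrac{Np}{N-sp}$), test the equation for $w$ with $|w|^{q-2}w$; the "power" version of the monotonicity inequality together with the same H\"older step as above (the $v$- and $u$-seminorms again being absorbed into the single factor $L$) and the fractional Sobolev embedding $W^{s,p}\hookrightarrow L^{p^\star_s}$ produces a recursive family of estimates of the form
\[
\frac1q\frac{d}{dt}\|w(\cdot,t)\|_{L^q(\Omega)}^q+c\,L\,\|w(\cdot,t)\|_{L^{q\kappa}(\Omega)}^q\ \le\ 0.
\]
Integrating over a nested sequence of time intervals shrinking to $\{t\}$, iterating in $k$, and summing the resulting geometric series --- here $\sum_{k\ge0}\kappa^{-k}=\tfrac{p\nu}{2(p-\nu)}$, which is precisely where the exponent $\tfrac12+\tfrac{p\nu}{4(p-\nu)}$ originates --- gives
\[
\|w(\cdot,t)\|_{L^\infty(\Omega)}\ \le\ \frac{C}{(Lt)^{\frac12+\frac{p\nu}{4(p-\nu)}}}\,\sup_{t/2\le \ell\le t}\|w(\cdot,\ell)\|_{L^2(\Omega)}.
\]
Since $t>2\tau$ we have $[\tfrac t2,t]\subset[\tau,\infty)$, so the exponential estimate of the previous paragraph applies at every $\ell\in[\tfrac t2,t]$ and bounds the supremum by $\|w(\cdot,\tau)\|_{L^2(\Omega)}e^{-\lambda L(\tfrac t2-\tau)}$, which is the claimed inequality.

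The main obstacle is the Moser iteration of the last step. One must (i) establish the correct nonlocal "power" algebraic inequality so that testing with $|w|^{q-2}w$ reproduces, at every level $q$, a quantity comparable to $L\,[\,|w|^{q/2}\,]_{W^{s,p}(\R^n)}^2$ with the $v$- and $u$-dependence uniformly controlled by $L$ --- this is the nonlocal, $p\neq2$ analogue of the classical Moser step and is more delicate than the local one because of the difference quotients of $|w|^{q/2}$ versus those of $w$; (ii) carry out the time-integration over shrinking slabs and the geometric bookkeeping carefully enough to land exactly on the stated exponent; and (iii) make all of the above rigorous for weak solutions via Steklov averaging and density, including the borderline regime $sp\ge n$ through the auxiliary exponent $N>sp$. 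By comparison, the $L^2$ energy identity, the elementary monotonicity inequalities, and the exponential $L^2$-decay are routine.
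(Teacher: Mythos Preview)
Your overall plan---exponential $L^2$ decay from the energy identity, then a Moser iteration on $[t/2,t]$, then combine---is the paper's strategy, and your $L^2$ step is essentially the paper's argument for $m=2$. (A minor point: the embedding $[w]_{W^{s,\nu}}\le C(\Omega)[w]_{W^{s,p}}$ you invoke is not immediate, since the kernels $|x-y|^{-n-s\nu}$ and $|x-y|^{-n-sp}$ differ; it is simpler to apply the fractional Sobolev inequality directly to $w$ and note that $p^\star_s\ge2$ because $p\ge\nu$, then H\"older down to $L^2$.)

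The genuine gap is in the Moser step: your purely time-slice recursion $\tfrac1q\tfrac{d}{dt}\|w\|_{L^q}^q+cL\|w\|_{L^{q\kappa}}^q\le0$ does \emph{not} deliver the exponent $\tfrac12+\tfrac{p\nu}{4(p-\nu)}$. Integrating it on shrinking intervals and using the monotonicity of each $L^{q_k}$ norm gives
\[
\|w(\cdot,t)\|_{L^\infty}\ \le\ C\,(Lt)^{-\frac12\sum_{k\ge0}\kappa^{-k}}\sup_{[t/2,t]}\|w\|_{L^2}\ =\ C\,(Lt)^{-\frac{p\nu}{4(p-\nu)}}\sup_{[t/2,t]}\|w\|_{L^2},
\]
since your own sum $\sum_{k\ge0}\kappa^{-k}=\tfrac{p\nu}{2(p-\nu)}$ enters with the prefactor $\tfrac12$ (from $q_k=2\kappa^k$). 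Feeding in the exponential $L^2$ decay of the supremum then misses a full factor $(tL)^{-1/2}$ from the statement.

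The paper recovers this factor by running a genuinely \emph{parabolic} iteration. One multiplies by a time cutoff $\eta$, iterates on the space--time integrals $\iint|w|^\alpha$ with ratio $\beta=\tfrac{p}{\nu}=\tfrac{sp}{n}+\tfrac p2$ (not your $\kappa=\tfrac{p^\star_s}{2}$), using the parabolic Sobolev inequality together with a H\"older-in-time step that passes from $\int\eta\,[|w|^{\alpha/2}]_{W^{s,p}}^2\,dt$ to $\bigl(\int\eta\,[|w|^{\alpha/2}]_{W^{s,p}}^p\,dt\bigr)^{2/p}\bigl(\int\eta\,dt\bigr)^{(p-2)/p}$; this introduces an extra $T^{(2-p)/2}$ per step and drives the $T$-exponent to $-1-\tfrac{p\nu}{2(p-\nu)}$. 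Crucially, the resulting $L^\infty$ bound is against $\int_{t/2}^{t}\!\int_\Omega|w|^2$ rather than the supremum, and integrating the exponential $L^2$ decay over $[t/2,t]$ produces the matching extra $1/L$. Without these two coupled modifications (parabolic Sobolev plus space--time $L^2$ endpoint), the stated exponent is not attained.
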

The results are sharp in the sense that the rate of convergence in Theorem \ref{thm:main1} cannot be improved and that one cannot have better than exponential convergence in Theorem \ref{thm:main2}. This is discussed in more detail together with examples in Section \ref{sec:ex}.

\begin{rem} The dependence of $\Omega$ in the constants in the theorems above is really a dependence of $|\Omega|$. This dependence enters at several stages and comes from applying embeddings from $W^{s,p}(\R^n)$ to $L^q(\Omega)$, whenever $q$ is not equal to the Sobolev exponent. See Proposition \ref{prop:sobpoin1}.
\end{rem}

\begin{rem} The constant $L$ in Theorem \ref{thm:main2} clearly depends on the solutions $u$ and $v$. It is noteworthy that unless both $u$ and $v$ are equal to the same constant function, $L$ is positive and finite. Moreover, if $u$ is a stationary solution then one may prove that the solution $v$ has a non-increasing $W^{s,p}$-seminorm. This can be done in the same spirit as it is done for the so called fractional Trudinger's equation. See Remark 6.3 in \cite{HL21}. In addition, if $u$ and $v$ are both $s$-H\"older continuous then we may write the estimate with $L$ expressed in terms of the $s$-H\"older seminorms instead.
\end{rem}

\section{Known result}
Evolutionary equations involving the fractional $p$-Laplacian has recently attracted much attention.  Equation \eqref{MainPDE} has been studied in \cite{abd,BB20, BLS21, dzz, GT21, MRT, puh, str19, str19b, Vaz15, Vaz3, Vaz20, warma}. In particular, the large time behavior of solutions to \eqref{MainPDE} with $f=0$ and with zero Dirichlet data has been studied in \cite{Vaz15} and \cite{abd}. In \cite{Vaz15}, it is proved that for $p>2$, there are solutions of the form
$$
U(x,t)=t^\frac{-1}{p-2}F(x), 
$$
where $F$ solves a stationary equation. Using this, decay estimates of the form
$$
|u(x,t)|\leq Ct^\frac{-1}{p-2},
$$
 are obtained.

In \cite{abd}, it is proved that for $1<p<2$, solutions enjoy the finite time extinction property. This is also mentioned in Section 7 in \cite{Vaz15}.

Note that all the above results are for solutions with zero Dirichlet boundary data, while our results detail the large time behavior of the difference of two solutions, and in particular also for solutions with non-zero Dirichlet data.

In passing, we also mention \cite{HLnonlocal}, where the large time behavior of solutions of the doubly nonlinear equation
$$
|\partial_t u|^{p-2}\,\partial_t u+(-\Delta_p)^s u=0, 
$$
is proved to be exponential.

We also seize the opportunity to mention the recent papers \cite{APJ1} and \cite{APJ2}, where another nonlocal doubly nonlinear equation is studied.

The corresponding local problem, that is, \eqref{MainPDE} with the $p$-Laplacian instead of the fractional $p$-Laplacian, is well understood. The large time behavior of solutions with zero Dirichlet boundary data is detailed in Chapters VI-VII in \cite{DiB}. The large time behavior for solutions with general Dirichlet data has been studied in \cite{JL09}. In particular it is proved that for $p>2$ solutions converge towards a stationary solutions with the speed $t^\frac{-1}{p-2}$, while for $p<2$ the rate of convergence is exponential. Examples also show that these rates are sharp. In the present paper we provide a non-local counterpart to their results.

\section{Preliminaries}\label{sec:not}
\subsection{Notation}
Throughout the paper we will use the notation 
$$
J_p(t)=|t|^{p-2}t, \quad p\in (1,\infty).
$$
Moreover, for $x\in \R$ and $M>0$ we define
$$
x_M = \begin{cases} x, & -M<x<M,\\
M, & x\geq M,\\
-M, &x\leq M.
\end{cases}
$$
For convenience, we also introduce 
$$
J_p^M(t)=J_p(t_M).
$$
In order to avoid cumbersome notation and long formulas we will often use $d\mu$ for short for $|x-y|^{-n-ps} dx dy$. 
\subsection{Fractional Sobolev spaces}
For $1<p<\infty$ and $s\in (0,1)$ the norm in the fractional Sobolev spaces $W^{s,p}(\R^n)$ is given by
$$
\|u\|^p_{W^{s,p}(\R^n)}=[u]^p_{W^{s,p}(\R^n)}+\|u\|^p_{L^p(\R^n)},
$$
where the Sobolev-Slobodecki\u{\i} seminorm is defined by
$$
[u]^p_{W^{s,p}(\R^n)}=\int_{\R^n}\!\!\int_{\R^n} \frac{|u(y)-u(x)|^{p}}{|y-x|^{ sp+n}} d x d y=\int_{\R^n}\!\!\int_{\R^n} |u(y)-u(x)|^{p} d\mu.
$$
The space $W^{s,p}_0(\Omega)$ is the closure of $C_0^\infty(\Omega)$ with respect to the norm $\|\cdot \|_{W^{s,p}(\R^n)}$.

\subsection{Sobolev inequalities}

We first state the fractional Sobolev inequality, this result can be found in Theorem 1 in \cite{Maz}.

\begin{thm}\label{thm:sob} Assume $sp<n$ and take $f \in W_0^{s,p}(\R^n)$. Then 
$$
\|f\|^p_{L^{p^*}(\R^n)}\leq \gamma(n,p)\frac{s(1-s)}{(n-sp)^{p-1}}[f]_{W^{s,p}(\R^n)}^p,
$$
where $p^* = \frac{np}{n-sp}$.
\end{thm}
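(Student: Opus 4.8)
The statement is classical --- it is Theorem 1 in \cite{Maz} --- so in the paper the honest thing is simply to quote it; here I sketch the argument anyway. By the very definition of $W_0^{s,p}(\R^n)$ as the $W^{s,p}$-closure of $C_0^\infty(\R^n)$, and since both sides of the inequality are continuous in that norm, it suffices to treat $f\in C_0^\infty(\R^n)$ and pass to the limit; writing $f=f_+-f_-$, one may also assume $f\ge0$.

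The clean part is the endpoint exponent $p=1$. Writing $f=\int_0^\infty\chi_{E_t}\,dt$ with $E_t=\{f>t\}$, Minkowski's integral inequality gives $\|f\|_{L^{n/(n-s)}(\R^n)}\le\int_0^\infty|E_t|^{(n-s)/n}\,dt$, while monotonicity of $t\mapsto\chi_{E_t}$ gives the identity $|f(x)-f(y)|=\int_0^\infty|\chi_{E_t}(x)-\chi_{E_t}(y)|\,dt$, hence $[f]_{W^{s,1}(\R^n)}=2\int_0^\infty P_s(E_t)\,dt$ with $P_s(E):=\iint_{E\times E^c}|x-y|^{-n-s}\,dx\,dy$. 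Everything thus reduces to the fractional isoperimetric inequality $P_s(E)\ge c(n)\,(s(1-s))^{-1}\,|E|^{(n-s)/n}$, which follows from Riesz's rearrangement inequality (after regularising the kernel to make it integrable, so that balls are extremal); the $(s(1-s))^{-1}$ is just the behaviour of $P_s(B_1)$, which blows up like $c(n)/s$ as $s\to0^+$ and like $c(n)/(1-s)$ as $s\to1^-$. This yields the case $p=1$ with constant $\asymp c(n)\,s(1-s)$, consistent with the claim since $(n-sp)^{p-1}=1$ there; the $s(1-s)$ factor is precisely what keeps the inequality alive both in the Bourgain--Brezis--Mironescu limit $s\to1$ and in the Maz'ya--Shaposhnikova limit $s\to0$.

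For $p\in(1,n/s)$ I would run Maz'ya's truncation argument, applied to the dyadic truncations $f_k:=\min\{(f-2^k)_+,\,2^k\}$, $k\in\Z$. Since $t\mapsto\min\{(t-2^k)_+,2^k\}$ is nondecreasing, $1$-Lipschitz and bounded by $2^k$, one has $\sum_k|f_k(x)-f_k(y)|=|f(x)-f(y)|$; comparing super-level sets gives $\|f\|_{L^{p^*}}^p\le C(p)\sum_k\|f_k\|_{L^{p^*}}^p$ together with $\|f_k\|_{L^{p^*}}^{p^*}\le2^{k(p^*-\frac{n}{n-s})}\|f_k\|_{L^{n/(n-s)}}^{n/(n-s)}$, while the coarea formula gives $[f_k]_{W^{s,1}}=2\int_{2^k}^{2^{k+1}}P_s(\{f>\tau\})\,d\tau$. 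Feeding the case $p=1$ into these relations and summing the resulting convergent series in $k$ with H\"older's inequality produces a bound by $[f]_{W^{s,p}(\R^n)}^p$, the factor $(n-sp)^{-(p-1)}$ arising because the ratio of that series degenerates exactly as $sp\to n$. I expect this constant bookkeeping for $p>1$ to be the real obstacle: one must check that no hidden $s$- or $(n-sp)$-dependence enters through the dimensional constant of the isoperimetric inequality, and that the exponents combine to give precisely $s(1-s)/(n-sp)^{p-1}$. It is worth noting that the more elementary route --- the pointwise bound $|f(x)|\lesssim M_{s,p}f(x)^{p/p^*}\|f\|_{L^{p^*}}^{1-p/p^*}$, with $M_{s,p}f(x):=\big(\int_{\R^n}|f(x)-f(y)|^p|x-y|^{-n-sp}\,dy\big)^{1/p}$, obtained by averaging over balls, optimising the radius, then raising to the power $p^*$ and integrating --- does give the embedding but raises a dimensional constant to the power $p^*$, hence a constant blowing up \emph{exponentially} in $(n-sp)^{-1}$; recovering the sharp polynomial rate is exactly what the truncation buys. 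One could alternatively invoke the capacitary criterion of \cite{Maz}, or write $f=c_{n,s}I_s((-\lap)^{s/2}f)$ and combine Hardy--Littlewood--Sobolev with $[f]_{W^{s,p}}^p\asymp\|(-\lap)^{s/2}f\|_{L^p}^p$, but these merely relocate the difficulty into the sharp asymptotics of the corresponding constants.
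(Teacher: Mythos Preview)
The paper gives no proof of this theorem at all: it merely states the inequality and refers to Theorem~1 in \cite{Maz}. Your opening sentence already identifies this correctly, and the sketch you then offer --- coarea plus the fractional isoperimetric inequality for $p=1$, followed by Maz'ya's dyadic truncation to reach general $p$ --- is precisely the strategy behind the cited reference, so there is nothing to compare against and your outline is faithful to the source. Your honest flag that the constant bookkeeping in the truncation step is where the real work lies (recovering the exact $s(1-s)/(n-sp)^{p-1}$ dependence rather than something exponentially worse) is apt; since the paper never uses the explicit form of the constant anyway, a full verification of that asymptotic is unnecessary for its purposes.
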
 

We also include a series of Sobolev-type inequalities.  This can for instance be found in \cite{BLS}, Proposition 2.7.

\begin{prop}
\label{prop:sobpoin1}
Suppose $1<p<\infty$ and $0<s<1$. Let $\Omega\subset\mathbb{R}^n$ be an open and bounded set. For every $u\in W^{s,p}(\mathbb{R}^n)$ such that $u=0$ almost everywhere in $\mathbb{R}^n\setminus\Omega$, we have
\begin{equation}
\label{poin}
\|u\|^p_{L^p(\Omega)}\le C_1\,|\Omega|^\frac{s\,p}{n}\,[u]^p_{W^{s,p}(\mathbb{R}^n)},
\end{equation}
\[
\|u\|^p_{L^\infty(\Omega)}\le C_1\,|\Omega|^{\frac{s\,p}{n}-1}\,[u]^p_{W^{s,p}(\mathbb{R}^n)}, \qquad \mbox{ if }  s\,p>n,
\]
and
\[
\|u\|^p_{L^{q}(\Omega)}\le C_2\,|\Omega|^{\frac{p}{q}+\frac{s\,p}{n}-1}\,[u]^p_{W^{s,p}(\mathbb{R}^n)}, \qquad \mbox{ for every } 1\le q<\infty, \mbox{ if }  s\,p=n
\]
for constants $C_1=C_1(n,p,s)$, $C_2=C_2(n,p,s)$ and $C_3=C_3(n,p,s)$.
\end{prop}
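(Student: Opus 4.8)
I would prove the three inequalities separately. Inequality \eqref{poin} is a fractional Poincar\'e inequality valid for all admissible $s,p$; the $L^\infty$ estimate will follow from a fractional Morrey embedding; and the $L^q$ estimate in the borderline case $sp=n$ will be deduced from the subcritical Sobolev inequality of Theorem~\ref{thm:sob} by interpolating down to a smaller smoothness exponent.

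\emph{The Poincar\'e inequality.} The plan is to use a plain averaging trick, which needs no restriction on $sp$. Fix $x\in\Omega$ and set $R=(2|\Omega|/\omega_n)^{1/n}$ with $\omega_n=|B_1|$, so that $|B_R(x)\setminus\Omega|\ge |B_R|-|\Omega|=|\Omega|$. Since $u$ vanishes almost everywhere outside $\Omega$, averaging the identity $|u(x)|^p=|u(x)-u(y)|^p$ over $y\in B_R(x)\setminus\Omega$ gives
$$
|u(x)|^p=\frac{1}{|B_R(x)\setminus\Omega|}\int_{B_R(x)\setminus\Omega}|u(x)-u(y)|^p\,dy\le \frac{R^{n+sp}}{|\Omega|}\int_{\R^n}\frac{|u(x)-u(y)|^p}{|x-y|^{n+sp}}\,dy ,
$$
where we used $|B_R(x)\setminus\Omega|\ge|\Omega|$ and $|x-y|<R$ on $B_R(x)$. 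Integrating in $x\in\Omega$ and observing that $R^{n+sp}/|\Omega|$ equals a dimensional constant times $|\Omega|^{sp/n}$ yields \eqref{poin}.

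\emph{The borderline case $sp=n$.} Given $q$, I would pick $\sigma\in(0,s)$; then $\sigma p<n$, and since the Sobolev exponent $p^\ast_\sigma:=\frac{np}{n-\sigma p}$ tends to $+\infty$ as $\sigma\uparrow s$, one may fix $\sigma$ with $p^\ast_\sigma\ge q$ (any $\sigma$ works when $q\le p$). To compare $[u]_{W^{\sigma,p}(\R^n)}$ with $[u]_{W^{s,p}(\R^n)}$, split the double integral defining $[u]^p_{W^{\sigma,p}(\R^n)}$ at $|x-y|=R:=|\Omega|^{1/n}$. On $\{|x-y|\le R\}$ one uses $|x-y|^{-n-\sigma p}\le R^{(s-\sigma)p}|x-y|^{-n-sp}$, bounding that part by $R^{(s-\sigma)p}[u]^p_{W^{s,p}(\R^n)}=|\Omega|^{(s-\sigma)p/n}[u]^p_{W^{s,p}(\R^n)}$; on $\{|x-y|>R\}$, convexity of $t\mapsto|t|^p$ and the support of $u$ give a bound $C(n,p,\sigma)R^{-\sigma p}\|u\|^p_{L^p(\Omega)}$, which by \eqref{poin} is again $\le C|\Omega|^{(s-\sigma)p/n}[u]^p_{W^{s,p}(\R^n)}$. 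Hence $[u]^p_{W^{\sigma,p}(\R^n)}\le C|\Omega|^{(s-\sigma)p/n}[u]^p_{W^{s,p}(\R^n)}$. Applying Theorem~\ref{thm:sob} at smoothness $\sigma$ and then H\"older's inequality on $\Omega$ to pass from $L^{p^\ast_\sigma}$ to $L^q$, the powers of $|\Omega|$ collapse precisely to $|\Omega|^{p/q}=|\Omega|^{p/q+sp/n-1}$, because $p/p^\ast_\sigma=1-\sigma p/n$. (The constant may then depend on $q$ through the choice of $\sigma$.)

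\emph{The $L^\infty$ estimate for $sp>n$.} Here I would invoke the fractional Morrey embedding $W^{s,p}(\R^n)\hookrightarrow C^{0,\alpha}(\R^n)$ with $\alpha=s-\frac np\in(0,1)$ and $[u]_{C^{0,\alpha}(\R^n)}\le C(n,p,s)[u]_{W^{s,p}(\R^n)}$; this is the standard dyadic telescoping argument, estimating the mean oscillation of $u$ on a ball $B_\rho(x)$ by $C\rho^{\alpha}[u]_{W^{s,p}(\R^n)}$ and summing the resulting convergent series (this is the point where $sp>n$ is used). Since $u$ is supported in $\Omega$ and $|B_{2R_0}(x)\setminus\Omega|>0$ for $R_0:=(|\Omega|/\omega_n)^{1/n}$ and every $x$, each $x\in\Omega$ admits a point $y\notin\Omega$ with $u(y)=0$ and $|x-y|<2R_0$, so that $|u(x)|=|u(x)-u(y)|\le C[u]_{W^{s,p}(\R^n)}(2R_0)^\alpha$; raising to the $p$-th power produces $|\Omega|^{\alpha p/n}=|\Omega|^{sp/n-1}$. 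The hard part will be this Morrey embedding with the correct scaling — setting up the dyadic oscillation estimate and summing it — together with the routine but careful bookkeeping of the powers of $|\Omega|$ in the borderline step; all of this is contained in \cite{BLS}, Proposition~2.7, which one may also simply cite.
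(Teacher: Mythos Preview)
Your argument is correct. The paper does not actually prove Proposition~\ref{prop:sobpoin1}; it simply records the statement and cites \cite{BLS}, Proposition~2.7 (as you yourself note at the end), so your sketch supplies the content behind that citation rather than taking a different route.
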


\subsection{Parabolic Banach spaces}\label{pBspaces}
Let $I\subset \mathbb{R}$ be an interval and let $V$ be a separable, reflexive Banach space, with its norm $\|\cdot\|_V$. 
Suppose that $v$ is a mapping such that for a.e. $t\in I$, $v(t)$ belongs to $V$. If the function $t\mapsto \|v(t)\|_V$ is measurable on $I$ and $1\le p\le \infty$, then we say that $v$ is an element of the Banach space $L^p(I;V)$ if 
\[
\int_I\|v(t)\|_V^pdt<+\infty.
\]
We write $v\in C(I;V)$ if the mapping $t\mapsto v(t)$ is continuous with respect to the norm on $V$. 

\subsection{Parabolic Sobolev inequalities}
We will be needing the two following parabolic inequalities of Sobolev-type:
\begin{thm}\label{thm:parsob}
Let $sp<n$ and $\Omega\subset\mathbb{R}^n$ be an open and bounded set. Then for any $f\in L^\infty((a,b);L^r(\R^n))\cap L^p((a,b);W_0^{s,p}(\Omega))$ there holds
$$
\int_{a}^b \int_\Omega |f|^{\kappa p}\leq C(n,p,s)\left(\int_a^b [f]_{W^{s,p}(\R^n)}^p \right)\left(\sup_{t\in (a,b)}\int_\Omega |f|^\frac{p\kappa^*(\kappa-1)}{\kappa^*-1}\right)^\frac{\kappa^*-1}{\kappa^*},
$$
for any $\kappa\in [1,\kappa^*]$ where $\kappa^*=n/(n-sp)$ and $r=\frac{p\kappa^*(\kappa-1)}{\kappa^*-1}$.
\end{thm}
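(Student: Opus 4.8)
The statement is a parabolic Sobolev inequality obtained by interpolating the (static) fractional Sobolev inequality of Theorem~\ref{thm:sob} in space with an $L^\infty$-in-time bound, in the usual Moser/DiBenedetto fashion. The plan is as follows. Fix $t\in(a,b)$ and write $\kappa^*=n/(n-sp)$ so that $p^*=\kappa^* p$. Given $\kappa\in[1,\kappa^*]$, I will choose $\theta\in[0,1]$ by the relation
\[
\frac{1}{\kappa p}=\frac{\theta}{p^*}+\frac{1-\theta}{r},\qquad r=\frac{p\kappa^*(\kappa-1)}{\kappa^*-1},
\]
and check that this forces $\theta p^* \cdot \frac{1}{\kappa p} = 1$, i.e.\ the $L^{p^*}$-part appears exactly to the power $p$ after raising to the power $\kappa p$; a short computation with these specific exponents shows $\theta\kappa p = p$ and $(1-\theta)\kappa p = r(\kappa^*-1)/\kappa^*$, which is precisely what is needed to match the two factors on the right-hand side.

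**Key steps.** First, by Hölder's inequality with the interpolation exponents above, for a.e.\ $t$,
\[
\|f(\cdot,t)\|_{L^{\kappa p}(\Omega)} \le \|f(\cdot,t)\|_{L^{p^*}(\R^n)}^{\theta}\,\|f(\cdot,t)\|_{L^{r}(\R^n)}^{1-\theta}.
\]
Second, raise this to the power $\kappa p$ and insert the bookkeeping identities for the exponents to get
\[
\int_\Omega |f(\cdot,t)|^{\kappa p}\,dx \le \|f(\cdot,t)\|_{L^{p^*}(\R^n)}^{p}\,\Big(\int_\Omega |f(\cdot,t)|^{r}\,dx\Big)^{\frac{\kappa^*-1}{\kappa^*}}.
\]
Third, bound the last factor by its supremum in $t$, pull it out of the time integral, and apply Theorem~\ref{thm:sob} to the surviving factor $\|f(\cdot,t)\|_{L^{p^*}(\R^n)}^{p}\le C(n,p,s)\,[f(\cdot,t)]_{W^{s,p}(\R^n)}^{p}$ (using $f(\cdot,t)\in W_0^{s,p}(\Omega)$ for a.e.\ $t$, and absorbing the $s(1-s)/(n-sp)^{p-1}$ factor into $C(n,p,s)$). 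Integrating in $t\in(a,b)$ then yields exactly the claimed inequality, with $r$ indeed equal to $\frac{p\kappa^*(\kappa-1)}{\kappa^*-1}$.

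**Main obstacle.** The only genuinely delicate point is verifying that the interpolation exponent $\theta$ determined by the Hölder balance is legitimate — that is, that $\theta\in[0,1]$ for the whole range $\kappa\in[1,\kappa^*]$ (at $\kappa=1$ one gets $\theta=0$ and the inequality is trivial; at $\kappa=\kappa^*$ one gets $\theta=1$ and $r$ blows up, consistent with the endpoint being the pure Sobolev inequality) — and that the two power identities $\theta\kappa p=p$ and $(1-\theta)\kappa p=r\,\frac{\kappa^*-1}{\kappa^*}$ hold with the stated $r$. This is a routine but slightly fiddly algebra exercise with the exponents; once it is pinned down, the rest is a one-line Hölder-plus-Sobolev argument. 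One should also note the borderline integrability requirement $f\in L^\infty((a,b);L^r(\R^n))$ is exactly what makes the $\sup_t$ finite, which matches the hypothesis.
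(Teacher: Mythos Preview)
Your proof is correct and is essentially the paper's own argument: the paper writes $|f|^{\kappa p}=|f|^{p}\cdot|f|^{(\kappa-1)p}$, applies H\"older at each fixed $t$ with exponents $\kappa^*$ and $\kappa^*/(\kappa^*-1)$, uses Theorem~\ref{thm:sob} on the first factor, and integrates in $t$---which is exactly your interpolation step once one notes that $\theta=1/\kappa$ and $(\kappa-1)p\cdot\kappa^*/(\kappa^*-1)=r$. One small slip in your endpoint commentary (not in the proof itself): the values are swapped---at $\kappa=1$ you have $\theta=1$ and $r=0$, while at $\kappa=\kappa^*$ you have $\theta=1/\kappa^*$ and $r=p^*$, so $r$ does not blow up there.
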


\begin{proof}
This is a direct consequence of the Sobolev inequality. Indeed, let $q=p(1+m/n)$ and apply H\"older's inequality combined with the above Sobolev inequality to obtain	 
\[
\begin{split}
\int_{a}^b \int_\Omega |f|^q dx dt &=\int_{a}^b \int_\Omega |f|^{p+mp/n} dx dt \\
&\leq \int_{a}^b\left(\int_\Omega |f|^{np/(n-sp)} dx\right)^{(n-sp)/n}\left(\int_\Omega |f|^\frac{m}{s} dx\right)^{sp/n}dt\\
&\leq C(n,p,s) \int_{a}^b [f]_{W^{s,p}(\R^n)}^p dt \left(\sup_{t\in [a,b]} \int_\Omega |f|^\frac{m}{s} dx\right)^{sp/n}.
\end{split}
\]
With $\kappa = 1+m/n$ this is exactly the desired result.
\end{proof}

\begin{thm}\label{thm:parsob2}
Let $n\leq sp<N$ and $\Omega\subset\mathbb{R}^n$ be an open and bounded set. Then for any $f\in L^\infty((a,b);L^r(\R^n))\cap L^p((a,b);W_0^{s,p}(\Omega))$ there holds
$$
\int_{a}^b \int_\Omega |f|^{\kappa p}\leq C(\Omega,n,p,s)\left(\int_a^b [f]_{W^{s,p}(\R^n)}^p \right)\left(\sup_{t\in (a,b)}\int_\Omega |f|^\frac{p\kappa^*(\kappa-1)}{\kappa^*-1}\right)^\frac{\kappa^*-1}{\kappa^*}
$$
for any $\kappa\in [1,\kappa^*]$ where $\kappa^*=N/(N-sp)$ and $r=\frac{p\kappa^*(\kappa-1)}{\kappa^*-1}$.
\end{thm}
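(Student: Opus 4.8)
The plan is to repeat the proof of Theorem~\ref{thm:parsob} almost verbatim, with the genuine fractional Sobolev inequality of Theorem~\ref{thm:sob}---which is unavailable here since $sp\geq n$---replaced by a surrogate inequality extracted from Proposition~\ref{prop:sobpoin1} by means of the ``fictitious dimension'' $N>sp$. The first step is therefore to record that, for every $g\in W^{s,p}(\R^n)$ with $g=0$ a.e.\ in $\R^n\setminus\Omega$,
\[
\Big(\int_\Omega |g|^{\kappa^* p}\Big)^{1/\kappa^*}=\|g\|_{L^{p^*_N}(\Omega)}^p\leq C(\Omega,n,p,s)\,[g]^p_{W^{s,p}(\R^n)},\qquad \kappa^*:=\frac{N}{N-sp},\quad p^*_N:=\kappa^* p=\frac{Np}{N-sp}.
\]
When $sp=n$ this is precisely the third estimate in Proposition~\ref{prop:sobpoin1} applied with the (finite, admissible) exponent $q=p^*_N$. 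When $sp>n$ it follows by combining the elementary bound $\|g\|_{L^{p^*_N}(\Omega)}\leq |\Omega|^{1/p^*_N}\|g\|_{L^\infty(\Omega)}$ with the $L^\infty$-estimate of Proposition~\ref{prop:sobpoin1}. In both cases the constant is allowed to depend on $|\Omega|$, as the statement permits. The one conceptual point to keep in mind is that we are \emph{not} embedding into $\R^N$: the functions continue to live on $\R^n$, and $N$ only serves to label the target Lebesgue exponent, which Proposition~\ref{prop:sobpoin1} makes attainable for \emph{every} $N>sp$.

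Given this, fix $\kappa\in[1,\kappa^*]$ and, for a.e.\ $t\in(a,b)$, split and apply H\"older's inequality with conjugate exponents $\kappa^*$ and $\kappa^*/(\kappa^*-1)$:
\[
\int_\Omega |f(t)|^{\kappa p}=\int_\Omega |f(t)|^{p}\,|f(t)|^{(\kappa-1)p}\leq \Big(\int_\Omega |f(t)|^{\kappa^* p}\Big)^{1/\kappa^*}\Big(\int_\Omega |f(t)|^{r}\Big)^{\frac{\kappa^*-1}{\kappa^*}},
\]
where a direct computation gives $r=\frac{p\kappa^*(\kappa-1)}{\kappa^*-1}$, exactly the exponent in the statement, and $\frac{\kappa^*-1}{\kappa^*}=\frac{sp}{N}$. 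Since $f(t)\in W^{s,p}_0(\Omega)\subset W^{s,p}(\R^n)$ vanishes outside $\Omega$ for a.e.\ $t$, the surrogate inequality from the first step bounds the first factor, yielding
\[
\int_\Omega |f(t)|^{\kappa p}\leq C(\Omega,n,p,s)\,[f(t)]^p_{W^{s,p}(\R^n)}\Big(\int_\Omega |f(t)|^{r}\Big)^{\frac{\kappa^*-1}{\kappa^*}}\qquad\text{for a.e.\ }t\in(a,b).
\]

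Finally, integrate this over $t\in(a,b)$, estimate the last factor by $\big(\sup_{t\in(a,b)}\int_\Omega |f|^{r}\big)^{\frac{\kappa^*-1}{\kappa^*}}$---which is finite because $f\in L^\infty((a,b);L^r(\R^n))$---and pull it outside the integral in $t$; this is exactly the asserted inequality. I do not anticipate a genuine obstacle: the argument is a routine transcription of the $sp<n$ case once the surrogate Sobolev inequality is in hand, and the only care needed is in the bookkeeping of exponents (checking $1/\kappa^*=p/p^*_N$, $r=N(\kappa-1)/s$, $\frac{\kappa^*-1}{\kappa^*}=\frac{sp}{N}$) and in verifying that the powers $\kappa^* p$ and $r$ are admissible in Proposition~\ref{prop:sobpoin1} in each of the regimes $sp=n$ and $sp>n$.
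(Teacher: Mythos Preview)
Your proposal is correct and follows essentially the same route as the paper: replace $n$ by the fictitious dimension $N$ in the proof of Theorem~\ref{thm:parsob} and substitute the genuine Sobolev inequality by the appropriate estimate from Proposition~\ref{prop:sobpoin1}. If anything, you are slightly more careful than the paper's one-line proof, which cites only ``the second inequality in Proposition~\ref{prop:sobpoin1}'' (the $L^\infty$ bound for $sp>n$), whereas you correctly separate the cases $sp=n$ and $sp>n$ and invoke the third, respectively second, inequality.
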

\begin{proof} Proceed as in the proof of Theorem \ref{thm:parsob} with replaced $n$ by $N$ and then apply the second inequality in Proposition \ref{prop:sobpoin1}.
\end{proof}

\section{Weak solutions}\label{sec:weak} 
Let $\Omega\subset \mathbb{R}^n$ be a bounded open set in $\mathbb{R}^n$ and assume that $\Omega\Subset\Omega^\prime\subset\R^n$, where $\Omega^\prime$ is a bounded open set in $\R^n$. Define also $I=(t_0,t_1]$, for any $t_0,t_1\in\mathbb{R}$ with $t_0<t_1$. In order to define our class of solutions we need to introduce the {\it tail space}
\[
L^{q}_{\alpha}(\mathbb{R}^n)=\left\{u\in L^{q}_{\rm loc}(\mathbb{R}^n)\, :\, \int_{\mathbb{R}^n} \frac{|u|^q}{1+|x|^{n+\alpha}}\,dx<+\infty\right\},\qquad q\ge 1 \mbox{ and } \alpha>0,
\]
together with the norm
\[
\|u\|_{L_\alpha^{q}(\mathbb{R}^n)} = \left(\int_{\mathbb{R}^n} \frac{|u|^q}{1+|x|^{n+\alpha}}\,dx\right)^{\frac{1}{q}}.
\]
Define also
$$
X^{s,p}_{0}(\Omega,\Omega^\prime)=\big\{v\in W^{s,p}(\Omega^\prime)\cap L^{p-1}_{sp}(\R^n): v=0\  \text{ on}\ \R^n\setminus\Omega\big\}.
$$
\begin{defn}\label{locweak}  Let $f\in L^{p^\prime}(I;(X^{s,p}_0(\Omega,\Omega^\prime))^*)$. 
We say that $u$ is a \emph{weak solution} of
\begin{equation}\label{locweaksol}
\partial_t u + (-\Delta_p)^su = f,\qquad\mbox{ in }\Omega\times I,
\end{equation}
if for any closed interval $J=[T_0,T_1]\subset I$
\[
u\in L^p(J;W^{s,p}(\Omega^\prime))\cap L^{p-1}(J; L^{p-1}_{sp}(\R^n))\cap C(J;L^2(\Omega))
\]
and 
\begin{equation}
\begin{split}
\label{locweakeq}
-\int_J\int_\Omega u(x,t)\,\partial_t\phi(x,t)\,dx\,dt&+ \int_J\iint_{\mathbb{R}^n\times\mathbb{R}^n}J_p(u(x,t)-u(y,t))\,(\phi(x,t)-\phi(y,t))d\mu\,dt  \\   
& = \int_\Omega u(x,T_0)\,\phi(x,T_0)\,dx -\int_\Omega u(x,T_1)\,\phi(x,T_1)\,dx \\
&+ \int_J \langle f(\cdot,t),\phi(\cdot,t)\rangle\,dt,
\end{split}
\end{equation}
for any $\phi\in L^{p}(J;X^{s,p}_0(\Omega,\Omega^\prime))\cap C^1(J;L^2(\Omega))$.
\par
Finally, we say that $u$ is a \emph{weak subsolution} if in \eqref{locweakeq} we replace the equality sign with less than or equal to, for any \emph{non-negative} $\phi$ as above. A \emph{weak supersolution} is defined similarly.
\end{defn}

\section{The degenerate case: $p>2$}
\subsection{A first decay estimate}
We first study the decay of certain $L^p$-norms.
\begin{prop}\label{Prop:Degenerate}
Let $\Omega$ be a bounded domain and let $u$ and $v$ be weak solutions of 
$$
u_t+(-\lap_p)^s u = f\quad \text{ in } \Omega\times (0,\infty).$$
Assume in addition that $f\in L^{p^\prime}_{\text{loc}}((0,\infty);(X^{s,p}_0(\Omega,\Omega^\prime))^*)$ and $w=u-v\in L^p_{\text{loc}}((0,\infty);W_0^{s,p}(\Omega))$. Define
$$
I_m(t)=\int_\Omega |w(x,t)|^m dx, \quad m\geq 2.
$$
Then
$$
I_m(t_2)\leq \min\left(C(t_2-t_1)^\frac{-m}{p-2},I_m(t_1)\right).
$$ 
for $t_2\ge t_1 >0$ and where $C=(\Omega, m,n,p,s)$.
\end{prop}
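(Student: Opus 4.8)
The plan is to subtract the two equations and test the equation for $w=u-v$ with a power of $w$, producing a differential inequality for $I_m$ that simultaneously gives monotonicity and polynomial decay, in the spirit of the Juutinen--Lindqvist treatment of the local problem. Since $u$ and $v$ solve the same equation, $w$ is a weak solution of $\partial_t w+(-\Delta_p)^s u-(-\Delta_p)^s v=0$. After a standard regularization (Steklov averages in time, and the truncations $J_m^M$ of Section~\ref{sec:not} as test functions, followed by $M\to\infty$) one arrives at the energy identity
\[
\frac1m\frac{d}{dt}I_m(t)+\iint_{\R^n\times\R^n}\big(J_p(u(x)-u(y))-J_p(v(x)-v(y))\big)\big(J_m(w(x))-J_m(w(y))\big)\,d\mu=0.
\]
Because $(u(x)-u(y))-(v(x)-v(y))=w(x)-w(y)$ and both $J_p$ and $J_m$ are strictly increasing, the two factors of the integrand carry the same sign, so the integrand is nonnegative; hence $I_m$ is non-increasing, which already gives $I_m(t_2)\le I_m(t_1)$.

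Next I would bound the dissipation from below. For $p\ge 2$ one has the elementary inequality $|J_p(a)-J_p(b)|\ge 2^{2-p}|a-b|^{p-1}$, so, using the sign information again, the integrand above is at least $2^{2-p}\,|w(x)-w(y)|^{p-1}\,|J_m(w(x))-J_m(w(y))|$. The algebraic core of the argument is then the elementary, scaling‑consistent inequality
\[
|a-b|^{p-1}\,|J_m(a)-J_m(b)|\ \ge\ c(m,p)\,\Big|\,|a|^{\frac{m+p-2}{p}}\sgn a-|b|^{\frac{m+p-2}{p}}\sgn b\,\Big|^{p},\qquad a,b\in\R,
\]
proved by splitting into the cases where $a,b$ have equal or opposite signs. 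Writing $\Phi(t)=|t|^{\frac{m+p-2}{p}}\sgn t$ and integrating against $d\mu$, the dissipation is bounded below by $c\,[\Phi(w(\cdot,t))]_{W^{s,p}(\R^n)}^p$, where $\Phi(w)$ vanishes outside $\Omega$.

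To close the argument I would invoke the Sobolev--Poincaré inequality \eqref{poin} of Proposition~\ref{prop:sobpoin1} applied to $\Phi(w)$, which gives $[\Phi(w)]_{W^{s,p}(\R^n)}^p\ge C_1^{-1}|\Omega|^{-sp/n}\int_\Omega|w|^{m+p-2}\,dx$, and then Jensen's inequality (legitimate since $\frac{m+p-2}{m}\ge 1$), which gives $\int_\Omega|w|^{m+p-2}\,dx\ge|\Omega|^{\frac{2-p}{m}}I_m(t)^{\frac{m+p-2}{m}}$. Combining everything yields the differential inequality
\[
\frac{d}{dt}I_m(t)\ \le\ -\,c\,I_m(t)^{1+\frac{p-2}{m}},\qquad c=c(\Omega,m,n,p,s)>0 .
\]
If $I_m$ vanishes at some time the claim is immediate from monotonicity; otherwise $\frac{d}{dt}\big(I_m^{-(p-2)/m}\big)\ge \frac{(p-2)c}{m}$, and integrating over $(t_1,t_2)$ gives $I_m(t_2)\le\big(\tfrac{m}{(p-2)c}\big)^{m/(p-2)}(t_2-t_1)^{-m/(p-2)}$. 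Together with $I_m(t_2)\le I_m(t_1)$ this is the asserted estimate with $C=C(\Omega,m,n,p,s)$.

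The main obstacle is the rigorous derivation of the energy identity and of the resulting differential inequality: one must justify that $J_m(w)$ (suitably truncated) is an admissible test function in the sense of Definition~\ref{locweak}, obtain enough regularity of $t\mapsto I_m(t)$ to run the ODE comparison, and correctly identify the limit of the dissipation term under $M\to\infty$, in particular that $\Phi(w(\cdot,t))\in W^{s,p}(\R^n)$ for a.e.\ $t$ so that \eqref{poin} applies. The algebraic inequality above is the other technical ingredient, but it is routine once the case analysis is carried out.
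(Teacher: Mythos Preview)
Your proposal is correct and follows the same route as the paper: test the difference equation with $J_m(w)$ (truncated), use the algebraic inequality of Lemma~\ref{lem:ineq2}/\ref{lem:ineqM} to bound the dissipation below by $[\,|w|^{(m-2)/p}w\,]_{W^{s,p}}^p$, apply Poincar\'e~\eqref{poin} and H\"older/Jensen to reach the ODE $I_m'\le -C\,I_m^{(m+p-2)/m}$, and integrate. The one procedural difference is the placement of the limit $M\to\infty$: you pass to the limit first and then run the ODE argument for $I_m$, whereas the paper carries out the entire ODE computation for $I_m^M$ with $M$-independent constants and only lets $M\to\infty$ at the very end. The paper's ordering sidesteps exactly the obstacle you flag in your last paragraph---one never needs to know a priori that $I_m(t)<\infty$ for $m>2$ or that $\Phi(w(\cdot,t))\in W^{s,p}(\R^n)$, since all quantities at level $M$ are trivially finite and the final estimate survives the limit by monotone convergence.
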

\begin{proof}
The first step is to estimate the derivative of $I_m$. Define 
$$
I_m^M(t)=\int_\Omega |w_M(x,t)|^m dx,
$$
where $w_M$ is the $M$-truncation of $w$ (see Section \ref{sec:not}). We will obtain an $M$-independent estimate for $I_m^M(t)$, which implies the desired result. We test the equation with $J_m(w_M)$. By Lemma \ref{lem:test} we obtain for $T_1\geq T_0$
\begin{align}
-m\int^{T_1}_{T_0}\iint_{\R^{N}\times\R^{N}}&\Big(J_p(u(x,t)-u(y,t))-J_p(v(x,t)-v(y,t))\Big)\nonumber\\
&\times\Big(J^M_{m}(u(x,t)-v(x,t))-J^M_{m}(u(y,t)-v(y,t))\Big)d\mu dt\label{mdiff} \\
&=I^M_m(T_1)-I^M_m(T_0).\nonumber 
\end{align}
From Lemma \ref{lem:ineqM} 
\[
\begin{split}
(J^M_m(v(x)-u(x))-J^M_m(v(y)-u(y))\left(J_p(v(x)-v(y))-J_p(u(x)-u(y))\right)\\
\geq C(m,p)\Big||w_M(x)|^\frac{m-2}{p}w_M-|w_M(y)|^\frac{m-2}{p}w_M(y)\Big|^p.
\end{split}
\]
From here on out, we omit the $t$-dependence of $u$ and $v$. 
Hence, 
\begin{equation}\label{eq:sest}
\begin{split}
&\int^{T_1}_{T_0}\iint_{\R^{n}\times\R^{n}} (J^M_m(v(x)-u(x))-J^M_m(v(y)-u(y))\left(J_p(v(x)-v(y))-J_p(u(x)-u(y))\right) d\mu dt\\
&\geq C(m,p)\int^{T_1}_{T_0} [|w_M|^{\frac{m-2}{p}}w_M]_{W^{s,p}(\R^n)}^p dt.
\end{split}
\end{equation}
Using that $\||w_M|^{\frac{m-2}{p}}w_M\|_{L^p(\Omega)}\leq C[|w_M|^{\frac{m-2}{p}}w_M]_{W^{s,p}(\R^n)}$, with $C=C(\Omega,n,p,s)$ (cf. \eqref{poin}) together with \eqref{mdiff} and \eqref{eq:sest} and inserted into \eqref{mdiff}, we obtain 
$$
I^M_m(T_1)-I^M_m(T_0)\leq -C\int_{T_0}^{T_1}\|w_M^{\frac{m-2}{p}}w_M\|^p_{L^p}, \quad C=C(\Omega, m,n,p,s).
$$
H\"older's inequality implies
$$
\left(\int_\Omega |w_M|^m dx\right)^\frac{m+p-2}{m}\leq C\int_\Omega |w_M|^{m+p-2} dx,\quad  C=C(\Omega,m,p).
$$
Hence, 
$$
I^M_m(T_1)-I^M_m(T_0)\leq -C\int_{T_0}^{T_1}\left(\int_\Omega |w_M|^m dx\right)^\frac{m+p-2}{m}dt=-C\int_{T_0}^{T_1} (I^M_m)^\frac{m+p-2}{m} dt, \quad C=C(\Omega, m,n,p,s).
$$
In particular, for a.e. $t$
$$
(I^M_m)'(t)\leq -C(I^M_m)^\frac{m+p-2}{m}, \quad C=C(\Omega, m,n,p,s).
$$
This implies
$$
\left((I^M_m)^\frac{2-p}{m}\right)'\geq C,\quad C=C(\Omega, m,n,p,s),
$$
for a.e. $t$. Since $I_m^M$ is a non-decreasing function, we have
$$
(I^M_m(t_2))^\frac{2-p}{m}\geq (I^M_m(t_1))^\frac{2-p}{m}+C(t_2-t_1).
$$
This implies the desired estimate for $I_m^M$ and ends the proof.
\end{proof}

\subsection{The $L^\infty$-estimate via Moser iteration} We now perform a Moser-type iteration in order to obtain an $L^\infty$-estimate for the difference of two solutions.

\begin{prop} \label{prop:degmoser} Let $\Omega$ be a bounded domain and let $u$ and $v$ be weak solutions of 
$$
u_t+(-\lap_p)^s u = f\quad \text{ in } \Omega\times (0,\infty).$$
Assume in addition that $f\in L^{p^\prime}_{\text{loc}}((0,\infty);(X^{s,p}_0(\Omega,\Omega^\prime))^*)$ and $w=u-v\in L^p_{\text{loc}}((0,\infty);W_0^{s,p}(\Omega))$. If $sp<n$ then
$$
\|w\|_{L^\infty([T	,t_2]\times \Omega)}^{\frac{n(p-2)}{sp}+p}\leq C_1T^{-\frac{n+sp}{sp}}\int_{T/2}^{t_2}\int_\Omega |w|^p dx dt,\quad C_1=C_1(n,p,s).
$$
If $n\leq sp<N$ then
$$
\|w\|_{L^\infty([T	,t_2]\times \Omega)}^{\frac{N(p-2)}{sp}+p}\leq C_2T^{-\frac{N+sp}{sp}}\int_{T/2}^{t_2}\int_\Omega |w|^p dx dt,\quad C_2=C_2(\Omega,n,p,s).
$$\end{prop}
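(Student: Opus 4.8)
The plan is a parabolic Moser iteration in the spirit of Juutinen--Lindqvist, combining a time-localized version of the energy estimate already contained in the proof of Proposition~\ref{Prop:Degenerate} with the parabolic Sobolev inequality of Theorem~\ref{thm:parsob} (respectively Theorem~\ref{thm:parsob2}). I would carry out the details for $sp<n$; the case $n\le sp<N$ is handled by the identical argument after replacing $n$ by $N$ throughout and invoking Theorem~\ref{thm:parsob2} in place of Theorem~\ref{thm:parsob}, which is exactly where the dependence on $|\Omega|$ appears, turning the constant into $C_2(\Omega,n,p,s)$. As in Proposition~\ref{Prop:Degenerate} I would argue with the truncation $w_M$, keep every constant independent of $M$, and let $M\to\infty$ at the very end by monotone convergence.

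First I would establish a time-localized Caccioppoli estimate: for $m\ge2$ and $\tfrac T2\le\sigma<\tau\le t_2$, test the equation for $w=u-v$ with $\eta(t)^p J_m(w_M)$, where $\eta$ is Lipschitz with $\eta\equiv0$ on $[0,\sigma]$, $\eta\equiv1$ on $[\tau,t_2]$ and $|\eta'|\le 2/(\tau-\sigma)$. Proceeding as in the proof of Proposition~\ref{Prop:Degenerate} --- Lemma~\ref{lem:test} for the term with the time derivative, Lemma~\ref{lem:ineqM} for the nonlocal term --- but \emph{without} using the Sobolev--Poincar\'e inequality \eqref{poin}, I expect, with $g:=|w_M|^{\frac{m-2}{p}}w_M$,
\[
\sup_{t\in[\tau,t_2]}\int_\Omega|w_M(\cdot,t)|^m\,dx+c(m,p)\,m\int_\tau^{t_2}[g]_{W^{s,p}(\R^n)}^p\,dt\le\frac{C}{\tau-\sigma}\int_\sigma^{t_2}\!\int_\Omega|w_M|^m\,dx\,dt,
\]
with $c(m,p)>0$ from Lemma~\ref{lem:ineqM} and $C$ numerical; keeping the Gagliardo seminorm of $g$ rather than $\|g\|_{L^p(\Omega)}$ is what keeps $\Omega$ out of the constant when $sp<n$.

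Next I would feed these two bounds into Theorem~\ref{thm:parsob} applied to $g$ on $(\tau,t_2)$, choosing $\kappa\in(1,\kappa^*]$ (with $\kappa^*=\tfrac{n}{n-sp}$) so that the exponent of the supremum factor is exactly $m$, i.e. $\tfrac{p\kappa^*(\kappa-1)}{\kappa^*-1}=\tfrac{mp}{m+p-2}$; this forces $\kappa=1+\tfrac{msp}{n(m+p-2)}$, one checks $1<\kappa<\kappa^*$ because $p>2$, and the exponent on the left becomes $(m+p-2)\kappa=\chi m+(p-2)$ with $\chi:=\tfrac{n+sp}{n}$. Using $2-\tfrac1{\kappa^*}=\chi$ and writing $Y(m;a):=\int_a^{t_2}\!\int_\Omega|w_M|^m$, I expect
\[
Y\big(\chi m+(p-2);\tau\big)\le\Big(\tfrac{C}{\tau-\sigma}\Big)^{\chi}Y(m;\sigma)^{\chi},
\]
where $C=C(n,p,s)$ absorbs $(c(m,p)\,m)^{-1}$, which grows at most polynomially in $m$. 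Iterating along $m_0=p$, $m_{j+1}=\chi m_j+(p-2)$, $\tau_j=T(1-2^{-j-1})$ (so $\tau_0=T/2$, $\tau_j\uparrow T$, $\tau_{j+1}-\tau_j=T2^{-j-2}$) gives $Y_j:=Y(m_j;\tau_j)$ with $Y_{j+1}\le(C2^j/T)^{\chi}Y_j^{\chi}$; the recursion has fixed point $-\tfrac{n(p-2)}{sp}$, so $m_j=\chi^j\alpha-\tfrac{n(p-2)}{sp}$ with $\alpha:=p+\tfrac{n(p-2)}{sp}$, hence $m_j\sim\alpha\chi^j$. Iterating the inequality for $\log Y_j$, the geometric series in $j$ converge; since $(\tau,t_2)\subset(\tau_j,t_2)$ for large $j$ whenever $\tau>T$, I conclude $\|w_M\|_{L^\infty((T,t_2)\times\Omega)}\le\liminf_j Y_j^{1/m_j}$ and thus $\|w_M\|_{L^\infty((T,t_2)\times\Omega)}^{\alpha}\le C\,T^{-\chi/(\chi-1)}\,Y(p;T/2)$. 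Since $\chi-1=sp/n$, we have $\chi/(\chi-1)=\tfrac{n+sp}{sp}$ and $\alpha=p+\tfrac{n(p-2)}{sp}$, and letting $M\to\infty$ with $Y(p;T/2)\le\int_{T/2}^{t_2}\!\int_\Omega|w|^p$ gives the claim (and the same with $N$ and Theorem~\ref{thm:parsob2} for $n\le sp<N$).

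The hard parts are twofold. The first is making the time-localized energy estimate rigorous inside the weak formulation: the term with $\partial_t w$ is not meaningful pointwise in $t$, so one must run the Steklov-averaging argument underlying Lemma~\ref{lem:test} simultaneously with the cutoff $\eta$. The second is the bookkeeping: one has to check that $(c(m,p)\,m)^{-1}$ is polynomially bounded in $m$ so that $\prod_j(c(m_j,p)\,m_j)^{-\chi/m_{j+1}}$ converges, and that the geometric sums of the numerical constants and of the factors $2^j$ reproduce precisely the exponent $\tfrac{n+sp}{sp}$ on $T^{-1}$; the delicate point is that the inhomogeneous shift $(p-2)$ in the exponent recursion $m_{j+1}=\chi m_j+(p-2)$ is exactly what turns the naive homogeneous exponent into $p+\tfrac{n(p-2)}{sp}$.
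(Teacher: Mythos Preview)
Your proposal is correct and follows essentially the same route as the paper: truncation to $w_M$, a time-localized Caccioppoli estimate obtained by testing with $\eta(t)J_m(w_M)$ via Lemma~\ref{lem:test} and Lemma~\ref{lem:ineqM}, the parabolic Sobolev inequality (Theorem~\ref{thm:parsob}) with exactly your choice of $\kappa$, and iteration along the recursion $m_{j+1}=\chi m_j+(p-2)$ (the paper's $\beta$ is your $\chi$) over the dyadic levels $T_k=T(1-2^{-k})$. The paper uses $\eta$ rather than $\eta^p$ and notes explicitly that $\alpha(\alpha-1)\big(\tfrac{p}{\alpha-2+p}\big)^p\ge C(p)\alpha^{2-p}$, confirming your polynomial control of $(c(m,p)m)^{-1}$; otherwise the arguments coincide.
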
 

\begin{proof} We perform the proof by obtaining a uniform estimate for $w_M$ (see Section \ref{sec:not}). The proof is split into several different steps. We first perform the proof in the case $sp<n$ and then comment on how it would change for $sp=n$.\\

\noindent {\bf Step 1:} We test the equation with $J_\alpha (w_M) \eta (t)$, where $\alpha\geq 2$ and $\eta$ is a smooth function such that $\eta(t)=0$ for $t\leq t_1$ and $\eta(t)=1$ for $t\geq t_2$. By Lemma \ref{lem:test}
\begin{equation}\label{eq:testa}
\begin{split}
&\frac{1}{\alpha}\int_{t_1}^{t_2} \int_\Omega |w_M|^\alpha\eta'(t) dx dt = \frac{1}{\alpha}\int_\Omega |w_M(x,t_2)|^\alpha dx\\& + \int_{t_1}^{t_2} \int_{\R^n}\int_{\R^n} \eta(t) \big(J_p(v(x)-v(y))-J_p(u(x)-u(y))\big)(J^M_\alpha(w(x))-J^M_\alpha(w(y)) d\mu dt.
\end{split}
\end{equation}
For the last term, we use Lemma \ref{lem:ineqM} to obtain the lower bound
\begin{equation}\label{eq:lbd}
\begin{split}
& \frac{(\alpha-1)}{3\cdot 2^{p-1}}\left(\frac{p}{\alpha-2+p}\right)^p\int_{t_1}^{t_2} \int_{\R^n}\int_{\R^n} \eta(t) 	\big| |w_M(x)|^\frac{\alpha-2}{p} w_M(x)-|w_M(y)|^\frac{\alpha-2}{p} w_M(y)\big|^p d\mu dt\\
&=\frac{(\alpha-1)}{3\cdot 2^{p-1}}\left(\frac{p}{\alpha-2+p}\right)^p\int_{t_1}^{t_2}\eta(t) [|w_M|^\frac{\alpha-2}{p}w_M ]^p_{W^{s,p}(\R^n)}	dt
\end{split}
\end{equation}
Combining \eqref{eq:testa} and \eqref{eq:lbd}, we obtain 
\[
\begin{split}
\frac{1}{\alpha}\int_\Omega |w_M(x,t_2)|^\alpha\ dx&+\frac{(\alpha-1)}{3\cdot 2^{p-1}}\left(\frac{p}{\alpha-2+p}\right)^p\int_{t_1}^{t_2}\eta(t) [|w_M|^\frac{\alpha-2}{p}w_M ]^p_{W^{s,p}(\R^n)}	dt\\
&\leq \frac{1}{\alpha}\int_{t_1}^{t_2} \int_\Omega |w_M|^\alpha\eta'(t) dx dt, 
\end{split}
\]
or
\[
\begin{split}
\int_\Omega |w_M(x,t_2)|^\alpha dx+\alpha^{2-p}C(p)\int_{t_1}^{t_2}\eta(t) [|w_M|^\frac{\alpha-2}{p}w ]^p_{W^{s,p}(\R^n)}	dt\leq \int_{t_1}^{t_2} \int_\Omega |w_M|^\alpha\eta'(t) dx dt, 
\end{split}
\]
where we have used that
$$
\alpha(\alpha-1)\left(\frac{p}{\alpha-2+p}\right)^p\geq \alpha^{2-p}C(p)>0
$$
for all $\alpha\geq 2$.\\

\noindent {\bf Step 2: } 
By varying $t_2$, we obtain (as long as $\eta(t)=1$ for $t\geq t^*$ and $\eta(t_1)=0$)
\begin{equation}\label{eq:tstar}
 \sup_{t\in [t^*,t_2]}\int_\Omega |w_M|^\alpha\eta dx+\alpha^{2-p}C(p)\int_{t^*}^{t_2}\eta(t) [|w_M|^\frac{\alpha-2}{p}w_M ]^p_{W^{s,p}(\R^n)}	dt\leq 2\int_{t_1}^{t_2} \int_\Omega |w_M|^\alpha\eta'(t) dx dt.
\end{equation}
\noindent {\bf Step 3:} 
The parabolic Sobolev embedding (Theorem \ref{thm:parsob}) with $\kappa=\frac{p\alpha s}{(p-2+\alpha)n}+1$ applied with  $f=|w_M|^\frac{\alpha-2}{p}w$ gives
\begin{equation}\label{eq:sobappl}
\begin{split}
\int_{t_1}^{t_2}\int_\Omega |w_M|^{p-2+\alpha(1+\frac{sp}{n})} dx dt&\leq C\int_{t_1}^{t_2} [|w_M|^\frac{\alpha-2}{p}w_M ]^p_{W^{s,p}(\R^n)}	dt\\
&\times \left(\sup_{[t_1,t_2]}\int_\Omega w_M^\alpha dx\right)^\frac{sp}{n},
\end{split}
\end{equation}
for any $t_2\geq t_1$ and where $C=C(n,p,s)$.\\
\noindent {\bf Step 4:} 
Fix $T>0$, let $T_k=T(1-2^{-k})$ for $k=1,2,3,\ldots$ and choose non-negative smooth functions $\eta_k$ such that
$$
\eta_k(t)=\begin{cases} 0 & t\leq T_k\\
1& t\geq T_{k+1}\\
|\eta^\prime_k|\leq \frac{2^{k+1}}{T}.
\end{cases}
$$
Then \eqref{eq:sobappl} combined with \eqref{eq:tstar} with $t^*=T_{k+1}$ and $t_1=T_k$  imply
\[
\begin{split}
&\int_{T_{k+1}}^{t_2}\int_\Omega |w_M|^{p-2+\alpha(1+\frac{sp}{n})} dx dt\leq C\int_{T_{k+1}}^{t_2} [|w_M|^\frac{\alpha-2}{p}w_M ]^p_{W^{s,p}(\R^n)}	dt \left(\sup_{[T_{k+1},t_2]}\int_\Omega w_M^\alpha dx\right)^\frac{sp}{n}\\
&\leq C\int_{T_{k+1}}^{t_2}\eta_k(t) [|w_M|^\frac{\alpha-2}{p}w_M]^p_{W^{s,p}(\R^n)}	dt \left(\sup_{[T_{k+1},t_2]}\int_\Omega \eta_k(t) w_M^\alpha dx\right)^\frac{sp}{n}\\
&\leq \alpha^{p-2}CC(p)\left(\int_{T_k}^{t_2}\int_\Omega |\eta_k'(t)\|w_M|^\alpha dx dt \right)^{1+\frac{sp}{n}}\\
&\leq \alpha^{p-2}C(s,p,n)\left(\frac{2^{k+1}}{T}\right)^{1+\frac{sp}{n}}\left(\int_{T_k}^{t_2}\int_\Omega |w_M|^\alpha dx dt \right)^{1+\frac{sp}{n}},
\end{split}
\]
where we have absorbed the two constants into one. With the notation $\beta = 1+\frac{sp}{n}$ we obtain for $\alpha\geq 2$
\[
\left(\int_{T_{k+1}}^{t_2}\int_\Omega |w_M|^{p-2+\alpha\beta} dx dt\right)^\frac{1}{\beta}\leq (\alpha^{p-2}C(s,p,n))^\frac{1}{\beta}\frac{2^{k+1}}{T}\int_{T_k}^{t_2}\int_\Omega |w_M|^\alpha dx dt .
\]
Starting with 
$$\alpha_1=p,\quad \alpha_2 = p-2+p\beta,\quad \alpha_3=p-2+\alpha_2\beta = (p-2)(1+\beta)+p\beta^2$$
and more generally 
$$
\alpha_{i+1} = p-2+\alpha_{i}\beta, \quad i=1,2,3,\ldots
$$
we obtain
\begin{equation}\label{eq:kest}
\begin{split}
&\left(\int_{T_{k+1}}^{t_2}\int_\Omega |w_M|^{(p-2)(1+\beta+\beta^2+\cdots+\beta^{k-1})+p\beta^k} dx dt\right)^\frac{1}{\beta^k}\\
&\leq (C(s,p,n))^{\frac{1}{\beta}+\frac{1}{\beta^2}+\cdots +\frac{1}{\beta^{k-1}}}
\frac{\beta^{\frac{1}{\beta}+\cdots +\frac{k}{\beta^{k}}}2^{2+\frac{3}{\beta}+\cdots +\frac{k+1}{\beta^{k-1}}}}{T^{1+\frac{1}{\beta}+\cdots +\frac{1}{\beta^{k-1}}}}\int_{T_1}^{t_2}\int_\Omega |w_M|^p dx dt .
\end{split}
\end{equation}
Here we have used that
\[
\begin{split}
((p-2)(1+\beta+\beta^2+\cdots+\beta^{k-1})+p\beta^k)/\beta_k &= \frac{\frac{(p-2)(\beta^k-1)}{1-\beta}}{\beta^k}+p\\
&\to \frac{(p-2)}{1-\beta}+p\\
&=\frac{n(p-2)}{sp}+p
\end{split}
\]
so that $\alpha_k = ((p-2)(1+\beta+\beta^2+\cdots+\beta^{k-1})+p\beta^k)\leq (n+1)\beta^k$, which implies
$$
(\alpha_k^{p-2})^\frac{1}{\beta}\leq \left(n^{p-2}\beta^k\right)^\frac{1}{\beta}.
$$
We also absorbed the part $n^{p-2}$ into the constant $C(s,p,n)$. Since $\beta>1$, the geometric sum satifies
$$
1+\frac{1}{\beta}+\ldots = \frac{n+sp}{sp}
$$
and the two sums
$$
\frac{1}{\beta}+\cdots +\frac{k}{\beta^{k}} ,\quad 2+\frac{3}{\beta}+\cdots +\frac{k+1}{\beta^{k-1}}
$$
are both convergent. We may therefore pass $k\to\infty$ in \eqref{eq:kest} and obtain
$$
\|w_M\|_{L^\infty([T	,t_2]\times \Omega)}^{\frac{n(p-2)}{sp}+p}\leq C_1(n,s,p)T^{-\frac{n+sp}{sp}}\int_{T/2}^{t_2}\int_\Omega |w_M|^p dx dt .
$$
Since the constant is independent of $M$ this implies the desired result for $w$.\\

\noindent {\bf The case $sp\geq n$.} In this case we would choose any $N$ satisfying $n\leq sp<N$ and perform the same proof as above but with $n$ replaced by $N$. The difference is here when we apply the Sobolev embedding (in this case Theorem \ref{thm:parsob2}), then the constant will depend also on $\Omega$, otherwise the proof is identical.

\end{proof}
\section{Proof of Theorem \ref{thm:main1}}
We are now ready to prove the first main theorem. 
\begin{proof} As before we let $w=u-v$. We split the proof into different cases, depending on whether $sp>n$ or not.

\noindent {\bf Case 1: $sp>n$.} Due to the the embedding from $W^{s,p}_0(\R^n)$ into $L^\infty$, this case is simpler. Recall the notation 
$$
I_2(t)=\int_\Omega (w(x,t))^2 dx.
$$
By using Lemma \ref{lem:test} with $w$ as test function, we obtain with the third part of Proposition \ref{prop:sobpoin1} for a.e. $t$
$$
I'_2(t)=-[w(\cdot,t)]_{W^{s,p}(\R^n)}^p \leq -C\|w(\cdot,t)\|^p_{L^\infty(\Omega)},\quad C=C(\Omega,n,p,s).
$$
Therefore, upon integration from $t/2$ to $t$
$$
\int^{t}_{t/2}C\|w(\cdot,\tau)\|^p_{L^\infty(\Omega)} d\tau \leq I_2(t/2)-I(t)\leq I_2(t/2).
$$
By Proposition \ref{Prop:Degenerate}, $t\mapsto\|w(\cdot,t)\|^p_{L^\infty(\Omega)}$ is non-increasing. Hence, 
$$
C(t-t/2)\|w(\cdot,t)\|^p_{L^\infty(\Omega)}\leq \int^{t}_{t/2}C\|w(\cdot,\tau)\|^p_{L^\infty(\Omega)} d\tau\leq I_2(t/2).
$$
Using Proposition \ref{Prop:Degenerate} for $m=2$, we arrive at
$$
\|w(\cdot,t)\|^p_{L^\infty(\Omega)}\leq \frac{C}{t} I_2(t/2)\leq Ct^{-\frac{p}{p-2}}, \quad C=C(\Omega,n,p,s)
$$
which implies the desired result.

\noindent {\bf Case 2: $sp< n$. }
By Proposition \ref{prop:degmoser} (recall  $t\mapsto\|w(\cdot,t)\|_{L^\infty(\Omega)}$ is non-increasing)
\begin{equation}
\label{eq:esti1}
\|w\|_{L^\infty([T	,\infty)]\times \Omega)}^{\frac{n(p-2)}{sp}+p}\leq CT^{-\frac{n+sp}{sp}}\int_{T/2}^{T}\int_\Omega |w|^p dx dt ,\quad C=C(n,s,p).
\end{equation}
By Proposition \ref{Prop:Degenerate} 
$$
T^{-\frac{n+sp}{sp}}\int_{T/2}^{T}\int_\Omega |w|^p dx\leq CT^{-\frac{n+sp}{sp}}\int_{T/2}^{T} t^{-\frac{1}{p-2}} dt\leq CT^{-\frac{n}{sp}-\frac{p}{p-2}},\quad C=C(\Omega,n,p,s).
$$
Inserting this into \eqref{eq:esti1} gives
$$
\|w\|_{L^\infty([T	,\infty)]\times \Omega)}\leq Ct^{-\frac{1}{p-2}}, \quad C=C(\Omega,n,p,s).
$$

\noindent {\bf Case 3: $sp=n$. } Proceed as in the case above, with $n$ replaced by $N$ so that $N>sp$.
\end{proof}

\section{The singular case: $\max\{1, \frac{2n}{n+2s}\}<p<2$}

\subsection{A first decay estimate}
Below we study the decay of $L^p$-norms in the case $n\geq 2$. For the case $n=1$, we refer to Remark \ref{re:singdecay}.

\begin{prop}\label{Prop:Singular} Let $n\geq 2$, $\frac{2n}{n+2s}\leq p<2$, $\Omega$ be a bounded domain and let $u$ and $v$ be weak solutions of 
$$
u_t+(-\lap_p)^s u = f\quad \text{ in } \Omega\times (0,\infty).$$
Assume in addition that $f\in L^{p^\prime}_{\text{loc}}((0,\infty);(X^{s,p}_0(\Omega,\Omega^\prime))^*)$,  $u,v\in L^\infty_{\text{loc}}((0,\infty);W^{s,p}(\R^n))$ and $w=u-v\in L^p_{\text{loc}}((0,\infty);W_0^{s,p}(\Omega))$. Then with
$$
I_m(t)=\int_\Omega |w(x,t)|^m dx, \quad m\geq 2.
$$
there holds
$$
I_m(t_2)\leq I_m(t_1)e^{-\frac{CL(m-1)}{m}(t_2-t_1)},
$$
for $t_2\ge t_1 >0$ and where $C=C(\Omega,n,p,s)$ and 
$$
L=\left[\mathop{\sup}\limits_{t_1\leq \tau\leq t_2}([v(\cdot,\tau)]_{W^{s,p}(\R^n)}+[u(\cdot,\tau)]_{W^{s,p}(\R^n)})\right]^{p-2}.
$$
\end{prop}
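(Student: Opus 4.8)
I would mimic the structure of the proof of Proposition~\ref{Prop:Degenerate}, replacing the algebraic-decay ODE argument by a linear (Gronwall-type) differential inequality whose coefficient carries the factor $L$. As before, set $w=u-v$, introduce the truncation $w_M$ and the truncated quantity $I_m^M(t)=\int_\Omega |w_M(x,t)|^m\,dx$; it suffices to obtain the estimate for $I_m^M$ with a constant independent of $M$, since $I_m^M\uparrow I_m$ as $M\to\infty$. Test the equation (via Lemma~\ref{lem:test}) with $J_m(w_M)$ to get, for $T_1\ge T_0$, the identity \eqref{mdiff}, and then invoke Lemma~\ref{lem:ineqM} exactly as in the degenerate case to obtain the lower bound
\[
I_m^M(T_0)-I_m^M(T_1)\ \ge\ C(m,p)\int_{T_0}^{T_1}\bigl[|w_M|^{\frac{m-2}{p}}w_M\bigr]^p_{W^{s,p}(\R^n)}\,dt.
\]
So far this is identical to the $p>2$ case; the difference is purely in how the seminorm on the right is bounded from below by $I_m^M$.

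**The key new estimate.** Since now $p<2$, Hölder's inequality goes the ``wrong'' way and one cannot directly compare $\int_\Omega |w_M|^m$ with $\int_\Omega |w_M|^{m+p-2}$ without losing a power; this is where the hypothesis $u,v\in L^\infty_{\mathrm{loc}}((0,\infty);W^{s,p}(\R^n))$ and the constant $L$ enter. The idea is to control $\|w_M\|_{L^m(\Omega)}$ in terms of $[|w_M|^{\frac{m-2}{p}}w_M]_{W^{s,p}}$ at the price of a factor involving $[u]_{W^{s,p}}+[v]_{W^{s,p}}$. Concretely, writing $g=|w_M|^{\frac{m-2}{p}}w_M$ so that $|g|^p=|w_M|^{m+p-2}$, I would interpolate: by the fractional Sobolev/Poincaré inequality \eqref{poin} (resp.\ its $sp\ge n$ analogues in Proposition~\ref{prop:sobpoin1}), $\|g\|_{L^p(\Omega)}^p\le C|\Omega|^{sp/n}[g]_{W^{s,p}}^p$, i.e. $\int_\Omega|w_M|^{m+p-2}\le C[g]_{W^{s,p}}^p$; then bound $\int_\Omega |w_M|^m = \int_\Omega |w_M|^{\frac{m(2-p)}{?}}\cdots$ by Hölder between the exponents $m+p-2$ and something controlled by the seminorm of $w_M$ itself (which is $\le [u]_{W^{s,p}}+[v]_{W^{s,p}}$ since $w_M$ is a truncation of $u-v$ and $w_M=0$ outside $\Omega$). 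After collecting exponents this should yield
\[
I_m^M(t)\ \le\ C\,L^{-1}\,[|w_M|^{\frac{m-2}{p}}w_M]^p_{W^{s,p}(\R^n)},\qquad L=\Bigl(\sup_{t_1\le\tau\le t_2}\bigl([u(\cdot,\tau)]_{W^{s,p}}+[v(\cdot,\tau)]_{W^{s,p}}\bigr)\Bigr)^{p-2},
\]
where I have used $p-2<0$ so that the supremum appears with a negative power, i.e.\ $L^{-1}$ is a positive multiple of that supremum. Feeding this back gives $(I_m^M)'(t)\le -C\,\frac{m-1}{m}\,L\,I_m^M(t)$ for a.e.\ $t$, and Gronwall yields $I_m^M(t_2)\le I_m^M(t_1)e^{-\frac{CL(m-1)}{m}(t_2-t_1)}$, whence the claim after $M\to\infty$.

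**Main obstacle.** The delicate point is the interpolation/Hölder bookkeeping that produces precisely the factor $L$ with the right power $p-2$ and, crucially, a constant depending only on $(\Omega,n,p,s)$ and not on $m$ in a way that would spoil the $\frac{m-1}{m}$ dependence. The constraint $p>\frac{2n}{n+2s}$ is exactly what is needed for the relevant Sobolev exponent to be in range (so that $\frac{m-2}{p}w_M+w_M$-type functions lie in the correct $L^q$), and one must track the $n\ge 2$ vs.\ $n=1$ and $sp<n$ vs.\ $sp\ge n$ cases separately — in the borderline/supercritical cases replacing $n$ by a larger $N$ and using the corresponding parts of Proposition~\ref{prop:sobpoin1}, which is why the constant is allowed to depend on $\Omega$. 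A secondary technical nuisance is justifying that $[w_M]_{W^{s,p}(\R^n)}\le [u]_{W^{s,p}(\R^n)}+[v]_{W^{s,p}(\R^n)}$: truncation is $1$-Lipschitz so it does not increase the Gagliardo seminorm of $w=u-v$, and the triangle inequality for $[\cdot]_{W^{s,p}}$ does the rest; one should state this as a short preliminary observation before running the estimate.
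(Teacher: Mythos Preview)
Your first step is where the argument breaks: Lemma~\ref{lem:ineqM} (and the underlying Lemma~\ref{lem:ineq2}) is stated and holds only for $p\ge 2$. For $1<p<2$ the inequality
\[
\big(J_p(a-c)-J_p(b-d)\big)\big(J_{\gamma+1}(a-b)-J_{\gamma+1}(c-d)\big)\ge C\Big||a-b|^{\frac{\gamma-1}{p}}(a-b)-|c-d|^{\frac{\gamma-1}{p}}(c-d)\Big|^p
\]
is simply false (take $c=d$ and $a,b$ large and close: the left side is of order $|a|^{p-2}|a-b|\cdot|a-b|^{\gamma}$, which cannot dominate $|a-b|^{\gamma-1+p}$ uniformly). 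So you never get the lower bound $I_m^M(T_0)-I_m^M(T_1)\ge C\int[|w_M|^{(m-2)/p}w_M]^p_{W^{s,p}}$, and everything downstream --- your interpolation producing the factor $L$, the use of $[w_M]_{W^{s,p}}\le[u]_{W^{s,p}}+[v]_{W^{s,p}}$ --- is built on sand.

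The paper does not try to salvage the degenerate-case pointwise bound. Instead it uses the singular analogue, Lemma~\ref{lemma:Msing_ineq_2}, which reads
\[
\big(J^M_m(w(x))-J^M_m(w(y))\big)\big(J_p(u(x)-u(y))-J_p(v(x)-v(y))\big)\ge c_m\Big||w_M(x)|^{\frac{m-2}{2}}w_M(x)-|w_M(y)|^{\frac{m-2}{2}}w_M(y)\Big|^2\,R^{p-2},
\]
with $R=|u(x)-u(y)|+|v(x)-v(y)|$. The weight $R^{p-2}$ is the point: it already carries the information about $[u]_{W^{s,p}}+[v]_{W^{s,p}}$. One then applies H\"older \emph{to the double integral} with exponents $2/p$ and $2/(2-p)$, writing $|{\cdot}|^p=|{\cdot}|^p R^{(p-2)p/2}R^{(2-p)p/2}$, to obtain
\[
\iint\big|{\cdot}\big|^2 R^{p-2}\,d\mu\ \ge\ \bigl[|w_M|^{\frac{m-2}{2}}w_M\bigr]_{W^{s,p}}^2\,\bigl([u]_{W^{s,p}}+[v]_{W^{s,p}}\bigr)^{p-2}.
\]
Now Sobolev gives $\bigl[|w_M|^{\frac{m-2}{2}}w_M\bigr]_{W^{s,p}}^2\ge C\bigl(\int_\Omega|w_M|^{mp^*_s/2}\bigr)^{2/p^*_s}$, and here the hypothesis $p\ge\frac{2n}{n+2s}$ is exactly $p^*_s\ge2$, so a further H\"older yields $\ge C\int_\Omega|w_M|^m=C\,I_m^M$. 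This delivers $(I_m^M)'\le -C\frac{m-1}{m}L\,I_m^M$ directly; no separate interpolation and no estimate on $[w_M]_{W^{s,p}}$ are needed. Note also the exponent in the auxiliary function is $\frac{m-2}{2}$, not $\frac{m-2}{p}$.
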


\begin{proof} As in the proof of Proposition \ref{Prop:Degenerate}, we will prove the desired estimate for 
$$
I_m^M(t)=\int_\Omega |w_M(x,t)|^m dx,
$$
with constants independent of $M$. This will imply the desired result for $w$. By Lemma \ref{lem:test}
\begin{align}
-m\int^{T_1}_{T_0}\iint_{\R^{n}\times\R^{n}}&\Big(J_p(u(x,t)-u(y,t))-J_p(v(x,t)-v(y,t))\Big)\nonumber\\
&\times\Big(J^M_{m}(u(x,t)-v(x,t))-J^M_{m}(u(y,t)-v(y,t))\Big)d\mu dt\label{mdiff2} \\
&=I^M_m(T_1)-I^M_m(T_0).\nonumber 
\end{align}
Using Lemma \ref{lemma:Msing_ineq_2}, we obtain the following estimate of the integrand in the left hand side of \eqref{mdiff2}
\begin{equation}
\label{eq:pointsing}
\begin{split}
&\big(J^M_m(w(x))-J^M_m(w(y))\big)\big(J_p(u(x)-u(y))-J_p(v(x)-v(y))\big)\\
&\geq \frac{4(m-1)(p-1)}{m^2} \left|w_M(x)^\frac{m-2}{2}w_M(x)-w_M(y)^\frac{m-2}{2}w_M(y)\right|^2 \big(|v(x)-v(y)|+|u(x)-u(y)|\big)^{p-2}.
\end{split}
\end{equation}
In addition, H\"older's inequality with exponents $2/p$ and $2/(2-p)$ implies for functions $a,b,c$ and $d$
\[
\begin{split}
&\iint_{\R^{n}\times\R^{n}} |a^\frac{m-2}{2}a-b^\frac{m-2}{2}b|^p d\mu = \iint_{\R^{n}\times\R^{n}} |a^\frac{m-2}{2}a-b^\frac{m-2}{2}b|^p (c+d)^{(p-2)\frac{p}{2}} (c+d)^{(2-p)\frac{p}{2}} d\mu  \\
&\leq \left(\iint_{\R^{n}\times\R^{n}} |a^\frac{m-2}{2}a-b^\frac{m-2}{2}b|^2 (c+d)^{(p-2)}d\mu \right)^\frac{p}{2}\left(\iint_{\R^{n}\times\R^{n}} |c+d|^p d\mu\right)^\frac{2-p}{2}.
\end{split}
\]
By replacing $a, b, c, d$ with $w_M(x), w_M(y), |v(x)-v(y)|, |u(x)-u(y)|$ respectively, this implies
\begin{equation}
\label{eq:longineq}
\begin{split}
\iint_{\R^{n}\times\R^{n}} &|w_M(x)^\frac{m-2}{2}w(x)-w(y)^\frac{m-2}{2}w_M(y)|^2 (|v(x)-v(y)|+|u(x)-u(y)|)^{(p-2)} d\mu\\
&\geq \left(\iint_{\R^{n}\times\R^{n}} |w_M(x)^\frac{m-2}{2}w_M(x)-w_M(y)^\frac{m-2}{2}w_M(y)|^p d\mu\right)^\frac{2}{p}\\
&\times \left(\iint_{\R^{n}\times\R^{n}} (|v(x)-v(y)|+|u(x)-u(y)|)^pd\mu\right)^\frac{p-2}{p}\\
&\geq C\left(\int_\Omega |w_M|^\frac{mp^*_s}{2}dx\right)^\frac{2}{p^*_s}\left(\iint_{\R^{N}\times\R^{N}} (|v(x)-v(y)|+|u(x)-u(y)|)^p d\mu\right)^\frac{p-2}{p}\\
&\geq C\int_\Omega |w_M|^mdx\left(\iint_{\R^{n}\times\R^{n}} (|v(x)-v(y)|+|u(x)-u(y)|)^p d\mu\right)^\frac{p-2}{p}\\
&\geq C\int_\Omega |w_M|^mdx \left([v]_{W^{s,p}(\R^n)}+[u]_{W^{s,p}(\R^n)}\right)^{p-2},\quad C=C(\Omega,n,s,p),
\end{split}
\end{equation}
for a.e. $t$. Here we have used the Sobolev embedding (Theorem \ref{thm:sob}) together with the observation that $p_s^*=np/(n-sp)$ so that $p^*_s\geq 2$ since $p\geq 2n/(n+2s)$. Integrating \eqref{eq:longineq} and using \eqref{eq:pointsing} together with \eqref{mdiff2}, we obtain
$$
I^M_m(T_1)-I^M_m(T_0)\leq -C\frac{m-1}{m}\int_{T_0}^{T_1}\int_\Omega |w_M(\cdot,t)|^mdx\left([v(\cdot,t)]_{W^{s,p}(\R^n)}+[u(\cdot,t)]_{W^{s,p}(\R^n)}\right)^{p-2} dt,$$
where $C=C(\Omega,n,s,p)$. Therefore, for a.e. $t$, we have
$$
(I^M)'_m(t)\leq -C\frac{m-1}{m} I^M_m ([v(\cdot,t)]_{W^{s,p}(\R^n)}+[u(\cdot,t]_{W^{s,p}(\R^n)})^{p-2}.
$$
Arguing as in the proof of Proposition \ref{Prop:Degenerate}, this implies the desired result for $w_M$ which ends the proof.

\end{proof}

\begin{rem}\label{re:singdecay}
For $n=1$ and $\max\{1,\frac{2}{1+2s}\}<p<2$, we have two different cases: $sp<n=1$ or $sp\geq n=1$ . In the case $sp<n=1$, the proof follows as the proof above. 
In the other case, when $sp\geq n=1$, we simply replace $n$ in $p^*_s$ by any $N>sp$ in the proof of Proposition \ref{Prop:Singular}, and then use the second and third inequalities in Proposition \ref{prop:sobpoin1}.
\end{rem}

\subsection{The $L^\infty$-estimate via Moser iteration} 

We perform a Moser-type iteration in order to obtain an $L^\infty$-estimate for the difference of two solutions.

\begin{prop} \label{prop:singmoser} Suppose $\max\{1,\frac{2n}{n+2s}\}<p<2$ and $t_2\in (T,\frac{3T}{2})$ for some $T>0$. Let $\Omega$ be a bounded domain and let $u$ and $v$ be weak solutions of 
$$
u_t+(-\lap_p)^s u = f\quad \text{ in } \Omega\times (0,\infty).$$
Assume in addition that $f\in L^{p^\prime}_{\text{loc}}((0,\infty);(X^{s,p}_0(\Omega,\Omega^\prime))^*)$,  $u,v\in L^\infty_{\text{loc}}((0,\infty);W^{s,p}(\R^n))$ and $w=u-v\in L^p_{\text{loc}}((0,\infty);W_0^{s,p}(\Omega))$. If $sp<n$ then
$$
\mathop{\sup}\limits_{T\leq t\leq t_2}\|w\|^2_{L^{\infty}(\Omega\times\{t\})}\leq
C_1\frac{T^\frac{(2-p)\nu-2p}{2(p-\nu)}}{L^\frac{p\nu}{2(p-\nu)}}\int_{\frac{T}{2}}^{t_2}\int_\Omega |w|^2 dxdt,\quad C_1=C_1(n,p,s),
$$
where $\nu=\frac{2n}{n+2s}$.

If $n\leq sp< N$ then
$$
\mathop{\sup}\limits_{T\leq t\leq t_2}\|w\|^2_{L^{\infty}(\Omega\times\{t\})}\leq
C_2\frac{T^\frac{(2-p)\nu-2p}{2(p-\nu)}}{L^\frac{p\nu}{2(p-\nu)}}\int_{\frac{T}{2}}^{t_2}\int_\Omega |w|^2 dxdt,\quad C_2=C_2(\Omega,n,p,s),
$$
where $\nu=\frac{2N}{N+2s}$.

In both cases above $L=\left[\mathop{\sup}\limits_{\frac{T}{2}\leq \tau\leq t_2}\left([v(\cdot,\tau)]_{W^{s,p}(\R^n)}+[u(\cdot,\tau)]_{W^{s,p}(\R^n)}\right)\right]^{p-2}$.

\end{prop}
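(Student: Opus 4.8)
The plan is to run a Moser iteration in the same spirit as in the proof of Proposition \ref{prop:degmoser}, but now starting the iteration from the $L^2$-level rather than the $L^p$-level, and carrying the extra factor $L$ (coming from the degeneracy of the $(p-2)$-power in the singular range) through every step. First I would test the equation for $w=u-v$ with $J_\alpha(w_M)\,\eta(t)$ for $\alpha\ge 2$, where $\eta$ is a smooth cutoff vanishing for $t\le t_1$ and equal to $1$ for $t\ge t_2$, exactly as in Step~1 of Proposition \ref{prop:degmoser}. Using Lemma \ref{lem:test} and the pointwise bound from Lemma \ref{lemma:Msing_ineq_2} (as in \eqref{eq:pointsing}) together with the Hölder-then-Sobolev argument from \eqref{eq:longineq}, the key energy inequality I expect to obtain is of the form
\[
\sup_{t\in[t^*,t_2]}\int_\Omega |w_M|^\alpha\eta\,dx
+\frac{\alpha-1}{\alpha^2}\,C\,L\int_{t^*}^{t_2}\eta(t)\Big(\int_\Omega |w_M|^{\frac{\alpha\nu^*}{2}}dx\Big)^{\frac{2}{\nu^*}}dt
\le 2\int_{t_1}^{t_2}\int_\Omega |w_M|^\alpha\,|\eta'|\,dx\,dt,
\]
where $\nu^*=n/(n-sp)$ (so that $\nu=2\nu^*/( \cdots)$, i.e. $\nu$ is the ``$2$-version'' of the Sobolev exponent matching the hypothesis $p\ge 2n/(n+2s)$). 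The point is that on the left-hand side we now gain a genuine $L^2$-to-$L^{\nu^*\cdot(\text{stuff})}$ improvement multiplied by $L$.

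Next I would interpolate the space-time integral $\int\int |w_M|^{\alpha\gamma}$, for the appropriate gain factor $\gamma>1$, between the two quantities controlled on the left (the $\sup_t L^\alpha$-norm and the integrated higher $L^q$-norm in space), exactly the role played by Theorem \ref{thm:parsob}/\ref{thm:parsob2} in the degenerate case; here the ``parabolic Sobolev'' step is replaced by a direct Hölder interpolation since we only have an $L^2$-based gain. This yields a recursive inequality of the shape
\[
\Big(\int_{T_{k+1}}^{t_2}\!\!\int_\Omega |w_M|^{\alpha_{k+1}}\Big)^{1/\gamma^{k}}
\le \Big(\frac{C\,2^{k}}{T\,L^{\theta}}\Big)^{\sum 1/\gamma^j}\,\alpha_k^{\sharp}\cdots\int_{T_1}^{t_2}\!\!\int_\Omega |w_M|^{2},
\]
with $T_k=T(1-2^{-k})$ and $\eta_k$ chosen with $|\eta_k'|\le 2^{k+1}/T$ as in Step~4 of Proposition \ref{prop:degmoser}. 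The exponents $\alpha_k$ satisfy a linear recursion $\alpha_{k+1}=\gamma\alpha_k$ (there is no ``$p-2$'' shift here because the power on $w$ on the left came out as $\alpha\nu^*/2$ with the constant term absorbed), so $\alpha_k/\gamma^k\to 2$, the constants $\alpha_k^\sharp$ grow only geometrically and are absorbed into the $C^{\sum 1/\gamma^j}$ factor, and the two numerical series $\sum k/\gamma^k$ converge. Passing $k\to\infty$ gives
\[
\sup_{T\le t\le t_2}\|w_M\|_{L^\infty(\Omega\times\{t\})}^{2}
\le C\,\frac{T^{-a}}{L^{b}}\int_{T/2}^{t_2}\int_\Omega |w_M|^2\,dx\,dt ,
\]
and since the right side does not depend on $M$, letting $M\to\infty$ yields the claim for $w$.

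The main obstacle — and the only place where real care is needed — is bookkeeping of the two exponents $a$ and $b$, i.e. verifying that the accumulated powers of $T$ and of $L$ coming from the geometric sum $1+\gamma^{-1}+\gamma^{-2}+\cdots=\gamma/(\gamma-1)$ collapse exactly to $a=\tfrac{2p-(2-p)\nu}{2(p-\nu)}$ and $b=\tfrac{p\nu}{2(p-\nu)}$ as stated. One must track that each iteration step contributes one factor $T^{-1}L^{-\theta}$ with a fixed $\theta$ determined by how $L$ enters the energy inequality (it multiplies the ``good'' term, so after dividing it appears with a negative power) and that $\gamma-1$ equals precisely the Sobolev gain $sp/(n+\cdots)$ built from $\nu$; then $\theta\cdot\gamma/(\gamma-1)=b$ and (using $t_2<3T/2$, so $t_2$-dependence is harmless) the $T$-power sums to $a$. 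For the case $n\le sp<N$ the argument is verbatim with $n$ replaced by $N$, the only change being that the Sobolev step uses the second inequality in Proposition \ref{prop:sobpoin1} and hence the constant picks up a dependence on $\Omega$; everything else, including the $\nu=2N/(N+2s)$ formula, goes through unchanged.
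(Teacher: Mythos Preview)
Your proposal is correct, and the exponents do collapse to the stated $a$ and $b$: with your spatial gain $\gamma=2-2/p^*_s$ one finds $1/(\gamma-1)=np/(np+2sp-2n)=p\nu/(2(p-\nu))=b$ and $\gamma/(\gamma-1)=1+b=a$, so the bookkeeping works out. (One small slip: what you call $\nu^*$ should be the full Sobolev exponent $p^*_s=np/(n-sp)$, not $n/(n-sp)$; the term you want on the left is $\big(\int_\Omega |w_M|^{\alpha p^*_s/2}\big)^{2/p^*_s}$.)

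The route, however, differs from the paper's in how the per-step gain is manufactured. You apply the fractional Sobolev inequality \emph{in space, pointwise in $t$}, to pass from $[|w_M|^{\alpha/2}]_{W^{s,p}}^2$ directly to a higher $L^q$-norm, and then recover $\int\!\!\int |w_M|^{\alpha\gamma}$ by a straightforward H\"older interpolation between $\sup_t\int |w_M|^\alpha$ and $\int_t(\int |w_M|^{\alpha p^*_s/2})^{2/p^*_s}$. The paper instead keeps the $W^{s,p}$-seminorm, first applies H\"older \emph{in time} to convert $\int_t\eta\,[\,\cdot\,]_{W^{s,p}}^2$ into $(\int_t\eta\,[\,\cdot\,]_{W^{s,p}}^p)^{2/p}(\int_t\eta)^{(p-2)/p}$, and only then invokes the parabolic Sobolev inequality (Theorem~\ref{thm:parsob}); this produces a per-step factor $T^{(2-p)/(2\beta)}$ in addition to the usual $T^{-1}$, and the iteration gain is $\beta=sp/n+p/2$ rather than your $\gamma$. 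Your approach is a bit more elementary, since it bypasses Theorem~\ref{thm:parsob} and the time-H\"older step altogether; the paper's approach has the advantage of mirroring the degenerate proof of Proposition~\ref{prop:degmoser} line by line. Either way the geometric recursion is pure, $\alpha_{k+1}=\text{(gain)}\cdot\alpha_k$, with no additive shift, and the two routes land on the same final exponents.
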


\begin{proof}
As in the proof of Proposition \ref{prop:degmoser}, we perform the proof for $w_M$ and split the proof into two cases: $sp<n$ and $sp\geq n$.

\bigskip

\noindent {\bf Case 1: $sp<n$.}

{\bf Step 1:} As in step 1 of the case $p\geq 2$ we test the equation with the function $J_\alpha (w_M) \eta (t)$, where $\eta$ is a smooth function such that $\eta(t)=0$ for $t\leq t_1$ and $\eta(t)=1$ for $t\geq t_2$. By Lemma \ref{lem:test}
\begin{equation}\label{sing1}
\begin{split}
&\frac{1}{\alpha}\int_{t_1}^{t_2} \int_\Omega |w_M|^\alpha\eta'(t) dx dt = \frac{1}{\alpha}\int_\Omega |w_M(x,t_2)|^\alpha dx\\& + \int_{t_1}^{t_2} \int_{\R^n}\int_{\R^n} \eta(t) (J_p(v(x)-v(y))-J_p(u(x)-u(y)))(J^M_\alpha(w(x))-J^M_\alpha(w(y)) d\mu dt,
\end{split}
\end{equation}
Using Lemma \ref{lemma:sing_ineq_2} as is done in the proof of Proposition \ref{Prop:Singular}, the last term can be bounded from below by
\begin{equation}\label{sing2}
\begin{split}
&C\frac{\alpha-1}{\alpha^2}\int_{t_1}^{t_2} [w_M^\frac{\alpha-2}{2}w_M]^2_{W^{s,p}(\mathbb{R}^n)} ([v]_{W^{s,p}(\mathbb{R}^n)}+[u]_{W^{s,p}(\mathbb{R}^n)})^{p-2} \eta(t) dt\\
&\geq C\frac{\alpha-1}{\alpha^2} \left(\mathop{\sup}\limits_{t_1\leq t\leq t_2}\left([v(\cdot,t)]_{W^{s,p}(\mathbb{R}^n)}+[u(\cdot,t)]_{W^{s,p}(\mathbb{R}^n)}\right)\right)^{p-2}
\int_{t_1}^{t_2} [w_M^\frac{\alpha-2}{2}w_M]^2_{W^{s,p}(\R^n)}  \eta(t) dt,
\end{split}
\end{equation}
where $C=C(p)$. By H\"older's inequality as above we have 
\begin{equation}\label{sing3}
\begin{split}
\int_{t_1}^{t_2}\eta(t) [|w_M|^\frac{\alpha-2}{2}w_M]^2_{W^{s,p}(\R^n)} dt \geq 
\left(\int_{t_1}^{t_2}\eta(t) [|w_M|^\frac{\alpha-2}{2}w_M]^p_{W^{s,p}(\R^n)} dt\right)^\frac{2}{p}
\left(\int_{t_1}^{t_2}\eta(t)  dt\right)^\frac{p-2}{p}.
\end{split}
\end{equation}
Combining \eqref{sing1}, \eqref{sing2} and \eqref{sing3} we obtain
\[
\begin{split}
\frac{1}{\alpha}\int_\Omega |w(x,t_2)|^\alpha dx &+
C\frac{\alpha-1}{\alpha^2} L(t_1, t_2) \left(\int_{t_1}^{t_2}\eta(t) [|w_M|^\frac{\alpha-2}{2}w ]^p_{W^{s,p}(\R^n)} dt\right)^\frac{2}{p}\left(\int_{t_1}^{t_2}\eta(t)  dt\right)^\frac{p-2}{p}\\
&\leq \frac{1}{\alpha}\int_{t_1}^{t_2} \int_\Omega |w_M|^\alpha\eta'(t) dx dt, 
\end{split}
\]
where $L(t_1, t_2)$ is given by
$$
L(t_1, t_2) = \left(\mathop{\sup}\limits_{t_1\leq t\leq t_2}\left([v(\cdot,t)]_{W^{s,p}(\mathbb{R}^n)}+[u(\cdot,t)]_{W^{s,p}(\mathbb{R}^n)}\right)\right)^{p-2}.
$$

{\bf Step 2:} By varying $t_2$, we obtain (as long as $\eta(t)=1$ for $t\geq t^*$ and $\eta(t_1)=0$)
\begin{equation}
\label{eq:singstar}
\begin{split}
\sup_{t\in [t^*,t_2]}\int_\Omega |w_M|^\alpha\eta dx &+
C\frac{\alpha-1}{\alpha} L (t_1,t_2)\left(\int_{t^*}^{t_2}\eta(t) [|w_M|^\frac{\alpha-2}{2}w_M]^p_{W^{s,p}(\R^n)} dt\right)^\frac{2}{p}\left(\int_{t^*}^{t_2}\eta(t)  dt\right)^\frac{p-2}{p}\\
&\leq 2\int_{t_1}^{t_2} \int_\Omega |w_M|^\alpha\eta'(t) dx dt, \quad C=C(p).
\end{split}
\end{equation}

{\bf Step 3:} By the parabolic Sobolev inequality (Theorem \ref{thm:parsob}), applied to $f=|w_M|^\frac{\alpha-2}{2}w_M$ with $\kappa=\frac{2s}{n}+1$, we have
\begin{equation}
\label{eq:singstar2}
\begin{split}
\int_{t_1}^{t_2}\int_\Omega |w_M|^{p\alpha \frac{2s+n}{2n}} dx dt&\leq C\int_{t_1}^{t_2}\eta(t) [|w_M|^\frac{\alpha-2}{2}w_M]^p_{W^{s,p}(\R^n)}dt\\
&\times \left(\sup_{[t_1,t_2]}\int_\Omega |w_M|^\alpha dx\right)^\frac{sp}{n},\quad C=C(n,p,s).
\end{split}
\end{equation}

{\bf Step 4:} Let $t_2\in(T,\frac{3}{2}T)$ for $T>0$. Define as in the degenerate case $T_k=T(1-2^{-k})$ for $k=1,2,3,\ldots$ and the functions 
$$
\eta_k(t)=\begin{cases} 0 & t\leq T_k\\
1& t\geq T_{k+1}\\
|\eta^\prime_k|\leq \frac{2^{k+1}}{T}.
\end{cases}
$$

Then \eqref{eq:singstar} and \eqref{eq:singstar2} with $t^*=T_{k+1}$ and $t_1=T_k$ imply
\[
\begin{split}
&\int_{T_{k+1}}^{t_2}\int_\Omega |w_M|^{p\alpha \frac{2s+n}{2n}} dx dt\leq C\int_{T_{k+1}}^{t_2} [|w_M|^\frac{\alpha-2}{2}w_M]^p_{W^{s,p}(\R^n)}	dt
\times \left(\sup_{[T_{k+1},t_2]}\int_\Omega |w_M|^\alpha dx\right)^\frac{sp}{n}\\
&\leq C\int_{T_{k+1}}^{t_2} \eta_k(t)[|w_M|^\frac{\alpha-2}{2}w_M]^p_{W^{s,p}(\R^n)}	dt
\times \left(\sup_{[T_{k+1},t_2]}\int_\Omega \eta_k(t)|w_M|^\alpha dx\right)^\frac{sp}{n}\\
&\leq \left(C\frac{\alpha}{L(T_k,t_2)(\alpha-1)}\right)^\frac{p}{2} \left(\int_{T_k}^{t_2} \int_\Omega |w_M|^\alpha\eta'(t) dx dt\right)^{\left(\frac{sp}{n}+\frac{p}{2}\right)}\left(\int_{t^*}^{t_2}\eta(t)  dt\right)^\frac{2-p}{2}\\
&\leq \left(C\frac{\alpha}{L(T_k,t_2)(\alpha-1)}\right)^\frac{p}{2} T^\frac{2-p}{2}\left(\int_{T_k}^{t_2} \int_\Omega |w_M|^\alpha\eta'(t) dx dt\right)^{\left(\frac{sp}{n}+\frac{p}{2}\right)}\\
&\leq \left(C\frac{\alpha}{L(T_k,t_2)(\alpha-1)}\right)^\frac{p}{2}T^\frac{2-p}{2}\left(\frac{2^{k+1}}{T}\right)^{\left(\frac{sp}{n}+\frac{p}{2}\right)} \left(\int_{T_k}^{t_2} \int_\Omega |w_M|^\alpha dx dt\right)^{\left(\frac{sp}{n}+\frac{p}{2}\right)},\quad C=C(n,p,s).
\end{split}
\] 
Let $\beta=\frac{sp}{n}+\frac{p}{2}$. Note that $\beta>1$ since $\frac{2n}{n+2s}<p$. Then the above estimate can be written as
$$
\left(\int_{T_{k+1}}^{t_2}\int_\Omega |w_M|^{\alpha\beta} dx dt\right)^\frac{1}{\beta}\leq \left(C\frac{\alpha}{L(\alpha-1)}\right)^\frac{p}{2\beta}T^\frac{2-p}{2\beta}\frac{2^{k+1}}{T}\int_{T_k}^{t_2} \int_\Omega |w_M|^\alpha dx dt,\quad C=C(n,p,s).
$$
Here we have replaced $L(T_k,2)$ with $L=L(T/2,t_2)$. Now we wish to iterate this, starting from $\alpha_1=2$ and letting $\alpha_k = \beta\alpha_{k-1}$. Then we note that $\alpha/(\alpha-1)\leq 2$ and then we can simply write
$$
\left(C\frac{\alpha}{(\alpha-1)}\right)^\frac{p}{2\beta}\leq C^\frac{1}{\beta},\quad C= C(n,s,p),
$$
upon renaming the constant. With this in mind and the notation $\gamma=1+\frac{2s}{n}$, the estimate becomes
$$
\left(\int_{T_{k+1}}^{t_2}\int_\Omega |w_M|^{\alpha\beta} dx dt\right)^\frac{1}{\beta}\leq C^\frac{1}{\beta}\frac{1}{L^\frac{1}{\nu}}T^\frac{2-p}{2\beta}\frac{2^{k+1}}{T}\int_{T_k}^{t_2} \int_\Omega |w_M|^\alpha dx dt,\quad C=C(n,p,s).
$$
The first iteration becomes
\begin{equation}\label{ineq:sing2}
\begin{split}
\left(\int_{T_2}^{t_2}\int_\Omega |w_M|^{2\beta} dx dt\right)^\frac{1}{\beta}&\leq 
\frac{C^\frac{1}{\beta}}{L^\frac{1}{\gamma}}\frac{4}{T}T^\frac{2-p}{2\beta}
\int_{T_1}^{t_2}\int_\Omega |w_M|^2 dxdt,
\end{split}
\end{equation}
and the second one
\[
\begin{split}
\left(\int_{T_3}^{t_2}\int_{\Omega}|w_M|^{2\beta^2}\right)^\frac{1}{\beta^2}
&\leq \frac{C^{\frac{1}{\beta}+\frac{1}{\beta^2}}}{L^{\frac{1}{\gamma}(1+\frac{1}{\beta})}}
\frac{2^{2+\frac{3}{\beta}}}{T^{1+\frac{1}{\beta}}}T^{\frac{2-p}{2}(\frac{1}{\beta}+\frac{1}{\beta^2})}
\int_{T_1}^{t_2}\int_\Omega |w_M|^2 dxdt.
\end{split}
\]
Note that $$1+\frac{1}{\beta}+\frac{1}{\beta^2}+\ldots=\frac{\beta}{\beta-1}=\frac{p}{p-\nu},$$
$$\frac{1}{\beta}+\frac{1}{\beta^2}+\ldots=\frac{\nu}{p-\nu},$$ where $\nu=\frac{2n}{n+2s}$,
and the sum
$$
2+\frac{3}{\beta}+\frac{4}{\beta^2}+\ldots
$$
is convergent since $\beta>1$. Using this and continuing the iteration, we arrive at the estimate
$$
\mathop{\sup}\limits_{T\leq t\leq t_2}\|w_M\|^2_{L^{\infty}(\Omega\times\{t\})}\leq
C^\prime\frac{T^\frac{(2-p)\nu-2p}{2(p-\nu)}}{L^\frac{p\nu}{2(p-\nu)}}\int_{\frac{T}{2}}^{t_2}\int_\Omega |w_M|^2 dxdt,
$$ 
where $C=C(n,s,p)$, upon renaming the constant again. Since the constant is independent of $M$ this implies the desired result for $w$.\\

\bigskip

\noindent {\bf Case 2: $sp\geq n$.} In this case we would choose any $N$ such that $n\leq sp<N$ and perform the same proof as above but with $n$ replaced by $N$. The difference is here when we apply the Sobolev embedding, then the constant will depend also on $\Omega$, otherwise the proof is identical.

\end{proof}

\section{Proof of Theorem \ref{thm:main2}}

\begin{proof} As in the proof of Theorem \ref{thm:main1}, we split the proof into different cases, depending on whether $sp<n$ or not. Let $w=u-v$, $t>2\tau>0$. 

\noindent {\bf Case 1: $sp< n$. }
Let $t>2\tau>0$. By Proposition \ref{prop:singmoser} together with the fact that $(2-p)\nu-2p<0$  and that $\|w(\cdot,t)\|_{L^2(\Omega)}$ is non-increasing as a function of $t$ we obtain
\begin{equation}
\label{ineq:sing3}
\|w\|^2_{L^{\infty}(\Omega\times\{[t,\infty)\})}\leq
C_1\frac{t^\frac{(2-p)\nu-2p}{2(p-\nu)}}{L^\frac{p\nu}{2(p-\nu)}}\int_{\frac{t}{2}}^{t}\int_\Omega |w|^2 dxdt,\quad C_1=C_1(n,p,s), \quad \nu=\frac{2n}{n+2s}.
\end{equation}
Here $L=L(t,\infty)$ as in the statement of the Theorem 2. By Proposition \ref{Prop:Singular} 
$$
\int_\Omega |w(x,t_2)|^2 dx\leq e^{-\lambda L(t_2-t_1)} \int_\Omega |w(x,t_1)|^2 dx
$$
for $t_2\geq t_1>0$, where $\lambda=\lambda(\Omega,n,p,s)$ and as before 
$$
L(t_1,t_2)=\left[\mathop{\sup}\limits_{t_1\leq \tau\leq t_2}([v(\cdot,\tau)]_{W^{s,p}(\R^n)}+[u(\cdot,\tau)]_{W^{s,p}(\R^n)})\right]^{p-2}.
$$
Since $\|w(\cdot,t)\|_{L^2(\Omega)}$ is non-increasing as a function of $t$, we have
\[
\begin{split}
\int^{t}_{\frac{t}{2}}\int_{\Omega}|w(x,\ell)|^2dxd\ell&\leq\int_{\Omega}|w(x,\tau)|^2dx\int^{t}_{\frac{t}{2}}e^{-\lambda L(\ell-\tau)}d\ell\\
&=\frac{1}{-\lambda L}\left(e^{-\lambda L(t-\tau)}-e^{-\lambda L(\frac{t}{2}-\tau)}\right)\int_{\Omega}|w(x,\tau)|^2dx\\
&\leq \frac{1}{\lambda L}e^{-\lambda L(\frac{t}{2}-\tau)}\int_{\Omega}|w(x,\tau)|^2dx.
\end{split}
\]
Here we have replaced $L(t_1,t_2)$ with $L=L(t,\infty)$. Combined with \eqref{ineq:sing3}, this yields
\[
\begin{split}
\|w\|^2_{L^{\infty}(\Omega\times\{[t,\infty)\})}&\leq
C\frac{t^{\frac{(2-p)\nu-2p}{2(p-\nu)}}}{L^{\frac{p\nu}{2(p-\nu)}+1}}e^{-\lambda L(\frac{t}{2}-\tau)}\int_{\Omega}|w(x,\tau)|^2dx\\
&=C\frac{1}{(tL)^{\frac{p\nu}{2(p-\nu)}+1}}e^{-\lambda L(\frac{t}{2}-\tau)}\int_{\Omega}|w(x,\tau)|^2dx,\quad C=C(\Omega,n,p,s).
\end{split}
\]
Hence,
$$
\|w\|_{L^{\infty}(\Omega\times\{[t,\infty)\})}\leq C(tL)^{-\frac{p\nu}{4(p-\nu)}-\frac{1}{2}}e^{-\lambda L(\frac{t}{2}-\tau)}\|w\|_{L^2(\Omega\times\{\tau\})},
$$
where $C=C(\Omega,n,s,p)$ and $\lambda=\lambda(\Omega,n,s,p)$. This is the desired result.

\medskip

\noindent {\bf Case 2: $n\leq sp< N$. } Proceed as in the case above, with $n$ replaced by $N$.
\end{proof}

\section{Examples}\label{sec:ex}
In this section, we provide examples showing that the rates of convergence obtained in Theorem 1 and 2 are sharp. In the case of Theorem 2, we have not been able to provide an example showing that the decay rate is sharp when $f=0$. 

\begin{ex}[Sharpness of Theorem 1]
In \cite{Vaz15}, it is proved that there are solutions of \eqref{MainPDE} with $f=0$ of the form
$$
U(x,t)=t^\frac{-1}{p-2}F(x), 
$$
where $F$ solves the equation
$$
(-\lap_p)^s F = F.
$$
This shows that the decay to the stationary solution which is identically zero is not faster than of order $t^{-1/(p-2)}$ so that the rate obtained in Theorem 1 is sharp.
\end{ex}

\begin{ex}[Sharpness of Theorem 2] We prove that there is a solution that does not converge faster than exponentially to the stationary solution, so that the rate cannot be better in general. It should be mentioned that the same separation of variables as in the example above yields a solution that has \emph{extinction in finite time}. See Theorem 5.2 in \cite{abd} and Section 7 in \cite{Vaz15}.

Let $u$ be a solution of 
$$
\begin{cases}
(-\lap_p)^s u = u& \text{ in } \Omega\\
u = 0 & \text{ in }\Omega^c
\end{cases}
$$
for a bounded domain $\Omega$. By Theorem 3.1 in \cite{ILPS16}, $u\in L^\infty(\Omega)$. This solution can be constructed as a minimizer of the seminorm $[u]_{W^{s,p}(\R^n)}$ over functions vanishing outside $\Omega$ with $L^2$-norm equal to 1, given that $2>p> 2n/(n+2s)$, so that the embedding of $W^{s,p}$ into $L^2$ is compact.

For $\lambda>0$ let 
$$
w=(1+e^{-\lambda t})u.
$$
We will now show that $w$ satisfies
$$
\begin{cases}
w_t+(-\lap_p)^s w \leq u& \text{ in }\Omega\times (0,\infty)\\
w=2u &\text{ in } \Omega\times\{0\}\\
w = 0 & \text{ in }\Omega^c.
\end{cases}
$$
The second and the third equalities are trivial. To show the first one we note that 
$w_t=-\lambda e^{-\lambda t}u$ and 
\[
\begin{split}
(-\lap_p)^s w & = (1+e^{-\lambda t})^{p-1}(-\lap_p)^s u \\
&=(1+ e^{-\lambda t})^{p-1}u\\
&\leq\left(1+ c(1+e^{-\lambda t})^{p-2}e^{-\lambda t}\right) u\\
&\leq u+ce^{-\lambda t} u,
\end{split}
\]
where we have used Lemma \ref{lemma:simple inequality} to estimate $(1+\lambda e^{-\lambda t})^{p-1}$. Therefore
$$
w_t+(-\lap_p)^s w\leq -\lambda e^{-\lambda t}u+u+ce^{-\lambda t} u=u+e^{-\lambda t}u(-\lambda+c)\leq u
$$
if $\lambda\geq c$. Hence $w$ satisifies the desired properties. Take now $v$ to be a solution of (provided by Theorem 3.3 in \cite{GT21}). 
$$
\begin{cases}
v_t+(-\lap_p)^s v = u& \text{ in }\Omega\times (0,\infty)\\
v=2u&\text{ in } \Omega\times\{0\}\\
v = 0 &\text{ in } \Omega^c.
\end{cases}
$$
Then by Theorem 2, $v\to u$ exponentially and $v\geq w$ by comparison (Proposition \ref{prop:comp2}). Therefore, 
$$
v-u\geq w-u=e^{-\lambda t} u.
$$
Hence, $v$ will not converge faster than exponentially to $u$.
\end{ex}

\appendix

\section{Comparison principles and regularization of test functions}
\subsection{Testing the equation with powers of $u$ and $v$}
In this section we prove that we may test the equation with Lipschitz functions of $u-v$.

The result below is very similar to Lemma $3.3$ in \cite{BLS21}. For completeness, we spell out some details here. In order to perform the proof we need to regularize the test function in time as is done in \cite{BLS21}. Let $\zeta: \mathbb{R}\rightarrow\mathbb{R}$ be a nonnegative, even smooth function with compact support in $(-1/2,1/2)$, satisfying $\int_{\mathbb{R}}\zeta(\tau)d\tau=1$. If $f\in L^1((a,b))$, we define the convolution
\begin{equation}\label{convolution}
f^{\epsilon}(t) = \frac{1}{\e}\int^{t+\frac{\e}{2}}_{t-\frac{\e}{2}}\zeta(\frac{t-\ell}{\e})f(\ell)d\ell=\frac{1}{\e}\int^{\frac{\e}{2}}_{-\frac{\e}{2}}\zeta(\frac{\sigma}{\e})f(t-\sigma)d\sigma, \text{\quad for }t\in(a,b),
\end{equation} 
where $0<\e<\min\{b-t, t-a\}$.

\begin{lem}\label{lem:test}
Assume that $u$ and $v$ are weak solutions of \eqref{locweaksol} in $\Omega\times (0,\infty)$ with $f\in L^{p^\prime}_{\rm loc}((0,\infty);(X^{s,p}_0(\Omega,\Omega^\prime))^*)$ such that $u-v\in L^p_{loc}((0,\infty); W_0^{s,p}(\Omega))$. Let $\eta:\R\to\R$ be a smooth function.
Then, for any globally Lipschitz function $F(t)$ $(t\in\R)$ such that $F(0)=0$ we have
\begin{align*}\label{test_func}
\int^{T_1}_{T_0}\iint_{\R^{N}\times\R^{N}}&\Big(J_p\big(u(x,t)-u(y,t)\big)-J_p\big(v(x,t)-v(y,t)\big)\Big)\\
&\times\Big(F\big(u(x,t)-v(x,t)\big)-F\big(u(y,t)-v(y,t)\big)\Big)\eta(t)d\mu dt\\
&+\int_{\Omega}\FF\big(u(x,T_1)-v(x,T_1)\big)\eta(T_1)dx\\
&=\int_{\Omega}\FF\big(u(x,T_0)-v(x,T_0)\big)\eta(T_0)dx+\int^{T_1}_{T_0}\int_{\Omega}\FF\big(u(x,t)-v(x,t)\big)\eta'(t) dx dt,
\end{align*}
where $\FF(t) = \int^{t}_0F(\rho)d\rho$.
\end{lem}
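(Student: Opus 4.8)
The plan is to establish the identity first for smooth-in-time test functions by a standard Steklov/mollification argument, and then pass to the general Lipschitz $F$ by approximation. More precisely, I would begin by recording that both $u$ and $v$ satisfy the weak formulation \eqref{locweakeq}; subtracting the two identities is not directly allowed because the admissible test functions $\phi$ are required to be $C^1$ in time, whereas the natural choice $\phi(x,t)=F(u(x,t)-v(x,t))\eta(t)$ has only the time regularity inherited from $u-v\in L^p_{\mathrm{loc}}((0,\infty);W^{s,p}_0(\Omega))\cap C([T_0,T_1];L^2(\Omega))$. This is precisely why the time-mollification \eqref{convolution} is introduced. So the first real step is to take as test function in the weak formulation for $u$ (and separately for $v$) the regularized object built from $\big(F(w)\big)^{\e}\eta$, or more symmetrically, to test the equation for $u$ against $(F(w))^\e \eta$ at time level shifted appropriately, using that convolution is self-adjoint on $L^2((T_0,T_1))$ to move the mollification onto $u$ itself in the parabolic term.

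Next I would handle the three groups of terms separately. The parabolic (time-derivative) term: after using the self-adjointness of the mollifier and the chain rule, $\int \partial_t(u-v)^\e \, F(w)\,\eta$ should, as $\e\to 0$, converge to $-\int_\Omega \mathcal{F}(w)\eta$ evaluated between the endpoints $T_0,T_1$ plus $\int\!\int \mathcal{F}(w)\eta'$; this is the place where one uses $\mathcal{F}'=F$, that $F(0)=0$ so $\mathcal{F}$ grows at most quadratically (hence $\mathcal{F}(w)\in L^\infty(L^1)$ by $w\in C(L^2)$), and the continuity of $t\mapsto w(\cdot,t)$ in $L^2$ to make sense of the boundary terms and to pass to the limit. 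The nonlocal (diffusion) term: here one wants $\iint\big(J_p(u(x)-u(y))-J_p(v(x)-v(y))\big)\big((F(w))^\e(x)-(F(w))^\e(y)\big)\eta \to$ the un-mollified version. Since $F$ is Lipschitz, $F(w)\in L^p_{\mathrm{loc}}(W^{s,p}_0(\Omega))$ with seminorm controlled by $[w]_{W^{s,p}}$, and $(F(w))^\e \to F(w)$ in $L^p((T_0,T_1);W^{s,p})$; combined with the fact that $J_p(u(x)-u(y))-J_p(v(x)-v(y))$ lies in the dual space $L^{p'}((T_0,T_1);\text{(fractional)})$ — which needs the hypothesis $w\in L^p_{\mathrm{loc}}(W^{s,p}_0)$ together with $u,v$ being in the solution class — this limit follows from weak-strong convergence. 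The forcing term involving $f$ cancels: since $u$ and $v$ solve the \emph{same} equation with the same $f$, subtracting removes $f$ entirely, so I would never actually need $f$ to pair with $F(w)$; this is the reason the statement has no $f$-term.

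For the reduction from general Lipschitz $F$ to the smooth case, I would approximate $F$ uniformly by smooth $F_k$ with uniformly bounded Lipschitz constants and $F_k(0)=0$ (e.g. mollify $F$ and subtract the value at $0$), note $\mathcal{F}_k\to\mathcal{F}$ locally uniformly, and pass to the limit in each of the three term groups using dominated convergence: the nonlocal term needs $|F_k(w(x))-F_k(w(y))|\le \mathrm{Lip}(F)|w(x)-w(y)|$ uniformly plus $J_p(u(x)-u(y))-J_p(v(x)-v(y))\in L^1(d\mu\,dt)$-type integrability, and the parabolic term is immediate from uniform boundedness of $\mathcal{F}_k$ on the relevant range and $w\in C(L^2)$.

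The main obstacle I expect is the nonlocal term: one must argue carefully that the ``flux'' $J_p(u(x,t)-u(y,t))-J_p(v(x,t)-v(y,t))$ is integrable against $d\mu\,dt$ after multiplication by the admissible increments $F(w)(x)-F(w)(y)$. This is not just Hölder's inequality with $p,p'$ applied naively, because $u$ and $v$ individually are only in $W^{s,p}(\Omega')$ locally and in the tail space $L^{p-1}_{sp}(\R^n)$, not globally in $W^{s,p}(\R^n)$; the cancellation $w=u-v\in W^{s,p}_0(\Omega)$ is what saves the integrability, and one has to split the domain of integration into the ``inside $\Omega'\times\Omega'$'' part (where $W^{s,p}$ regularity is available) and the ``tail'' part (where one uses that $w$, hence $F(w)$, vanishes outside $\Omega$ together with the tail estimates on $u,v$), exactly as in Lemma 3.3 of \cite{BLS21}. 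Since the paper explicitly says this result is very similar to that lemma, I would cite \cite{BLS21} for the technical integrability splitting and only spell out the chain-rule and limiting arguments that are specific to the present formulation with the Lipschitz $F$ and the smooth cutoff $\eta(t)$.
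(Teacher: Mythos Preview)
Your overall strategy is correct and aligns with the paper's proof: mollify in time, exploit the cancellation of the $f$-term when subtracting the two weak formulations, handle the parabolic term via the chain rule and the primitive $\mathcal{F}$, and split the nonlocal term into the local piece on $\Omega'\times\Omega'$ and the tail piece on $\Omega\times(\R^n\setminus\Omega')$, citing \cite{BLS21} for the latter.

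There is, however, one technical point where the paper proceeds differently and more cleanly. You propose to test with $(F(w))^{\e}\eta$ and then, after moving the mollifier by self-adjointness, arrive at $\int \partial_t w^{\e}\,F(w)\,\eta$. At this stage you invoke ``the chain rule'', but $\partial_t w^{\e}\,F(w)$ is \emph{not} an exact derivative: the chain rule would require $F(w^{\e})$, not $F(w)$, in the second factor. To close your argument you would need a commutator estimate showing $\int \partial_t w^{\e}\,\big(F(w)-F(w^{\e})\big)\,\eta\to 0$, which is doable (using that $\partial_t w$ lies in the dual space via the equation and $F(w)-F(w^{\e})\to 0$ in $L^p(W^{s,p}_0)$) but is an extra step. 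The paper sidesteps this entirely by choosing $\phi=F\big((u-v)^{\e}\big)\eta$, i.e.\ mollifying \emph{inside} $F$. Then the term becomes $\int \partial_t w^{\e}\,F(w^{\e})\,\eta = \int \partial_t\big(\mathcal{F}(w^{\e})\big)\eta$ exactly, since $\mathcal{F}$ is $C^1$ with $\mathcal{F}'=F$; integration by parts in $t$ then gives the boundary and $\eta'$ terms directly, for every $\e>0$, before passing to the limit. In particular this makes your separate approximation of Lipschitz $F$ by smooth $F_k$ unnecessary.
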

\begin{proof}
Let $(T_0, T_1)\Subset J \subset(0,\infty)$ and $\phi\in L^{p}(J;X^{s,p}_0(\Omega,\Omega^\prime))\cap C^1(J;L^2(\Omega))$. For $\e$ small enough,
we use the time-regularized function $\phi^\e$ as a test function in \eqref{locweaksol}. 
By the properties of convolution, Fubini's theorem and integration by parts we obtain
\[
\begin{split}
-\int^{T_1}_{T_0}\int_{\Omega}u(x,t)\partial_t\phi^\e(x,t)&dxdt=\int_{\Omega}\int^{T_1-\frac{\e}{2}}_{T_0+\frac{\e}{2}}\partial_{\ell}u^{\e}(x, \ell)\phi(x,\ell)d\ell dx + \Sigma(u,\e)\\
&-\int_{\Omega}\Big[u^{\e}\big(x,T_1-\frac{\e}{2}\big)\phi\big(x, T_1-\frac{\e}{2}\big)-u^{\e}\big(x,T_0+\frac{\e}{2}\big)\phi\big(x, T_0+\frac{\e}{2}\big)\Big]dx,
\end{split}
\]
where
\[
\begin{split}
\Sigma(u,\e)=&-\int_{\Omega}\int^{T_0+\frac{\e}{2}}_{T_0-\frac{\e}{2}}\Big(\frac{1}{\e}\int^{\ell+\frac{\e}{2}}_{T_0}u(x,t)\zeta\big(\frac{\ell-t}{\e}\big)dt\Big)\partial_{\ell}\phi(x,\ell)d\ell dx\\
&-\int_{\Omega}\int^{T_1+\frac{\e}{2}}_{T_1-\frac{\e}{2}}\Big(\frac{1}{\e}\int^{T_1}_{\ell-\frac{\e}{2}}u(x,t)\zeta\big(\frac{\ell-t}{\e}\big)dt\Big)\partial_{\ell}\phi(x,\ell)d\ell dx.
\end{split}
\]
Since $u$ is a weak solution, it follows that for $0<\e<\e_0$
\begin{equation}\label{test_u}
\begin{split}
\int^{T_1}_{T_0}\iint_{\R^n\times\R^n}&J_p\big(u(x,t)-u(y,t)\big)\big(\phi^{\e}(x,t)-\phi^{\e}(y,t)\big)d\mu dt\\
&+\int_{\Omega}\int^{T_1-\frac{\e}{2}}_{T_0+\frac{\e}{2}}\partial_t u^{\e}(x,t)\phi(x,t)dtdx+\Sigma(u,\e)\\
=\int_{\Omega}&\Big[u(x,T_0)\phi(x,T_0)-u^{\e}\big(x,T_0+\frac{\e}{2}\big)\phi\big(x,T_0+\frac{\e}{2}\big)\Big]dx\\
+&\int_{\Omega}\Big[u^{\e}\big(x,T_1-\frac{\e}{2}\big)\phi\big(x,T_1-\frac{\e}{2}\big)-u(x,T_1)\phi(x,T_1)\Big]dx\\
+&\int^{T_1}_{T_0}\langle f(\cdot,t),\phi^\e(\cdot,t)\rangle dt.
\end{split}
\end{equation}
By integration by parts, the term $\Sigma(u,\e)$ can be rewritten as
\[
\begin{split}
\Sigma(u,\e)=&-\int_{\Omega}\Big(\frac{1}{\e}\int^{T_0+\e}_{T_0}u(x,t)\zeta\big(\frac{T_0-t}{\e}+\frac{1}{2}\big)dt\Big)\phi\big(x,T_0+\frac{\e}{2}\big)dx\\
&+\int_{\Omega}\int^{T_0+\frac{\e}{2}}_{T_0-\frac{\e}{2}}\Big(\frac{1}{\e^2}\int^{\ell+\frac{\e}{2}}_{T_0}u(x,t)\zeta^\prime\big(\frac{\ell-t}{\e}\big)dt\Big)\phi(x,\ell)d\ell dx\\
&+\int_{\Omega}\Big(\frac{1}{\e}\int^{T_1}_{T_1-\e}u(x,t)\zeta\big(\frac{T_1-t}{\e}-\frac{1}{2}\big)dt\Big)\phi\big(x,T_1-\frac{\e}{2}\big)dx\\
&-\int_{\Omega}\int^{T_1+\frac{\e}{2}}_{T_1-\frac{\e}{2}}\Big(\frac{1}{\e^2}\int^{T_1}_{\ell-\frac{\e}{2}}u(x,t)\zeta^\prime\big(\frac{\ell-t}{\e}\big)dt\Big)\phi(x,\ell)d\ell dx,
\end{split}
\]
where we used the fact that $\zeta$ has compact support in $(-1/2,1/2)$. Using a change of variables, we may write
\[
\begin{split}
\Sigma(u,\e)=&-\int_{\Omega}\left(\int_{-\frac{1}{2}}^{\frac{1}{2}} u\left(x,T_0-\varepsilon\,\rho+\frac{\varepsilon}{2}\right)\,\zeta(\rho)\,d\rho\right)\,\phi\left(x,T_0+\frac{\varepsilon}{2}\right)\,dx\\
&+\int_{\Omega}\int^{\frac{1}{2}}_{-\frac{1}{2}}\Big(\int^{\rho}_{-\frac{1}{2}}u(x,\e\rho+T_0-\e\sigma)\zeta^\prime(\sigma)d\sigma\Big)\phi(x,\e\rho+T_0)d\rho dx\\
&+\int_{\Omega}\left(\int^{\frac{1}{2}}_{-\frac{1}{2}} u\left(x,T_1-\varepsilon\,\rho-\frac{\varepsilon}{2}\right)\,\zeta(\rho)\,d\rho\right)\,\phi\left(x,T_1-\frac{\varepsilon}{2}\right)\,dx\\
&-\int_{\Omega}\int^{\frac{1}{2}}_{-\frac{1}{2}}\Big(\int^{\frac{1}{2}}_{\rho}u(x,\e\rho+T_1-\e\sigma)\zeta^\prime(\sigma)d\sigma\Big)\phi(x,\e\rho+T_1)d\rho dx.
\end{split}
\]
For another solution $v$ we may write the exact same formula, i.e.,
\begin{equation}\label{test_v}
\begin{split}
\int^{T_1}_{T_0}\iint_{\R^n\times\R^n}&\Big(J_p\big(v(x,t)-v(y,t)\big)\Big)\Big(\phi^{\e}(x,t)-\phi^{\e}(y,t)\Big)d\mu dt\\
&+\int_{\Omega}\int^{T_1-\frac{\e}{2}}_{T_0+\frac{\e}{2}}\partial_t v^{\e}(x,t)\phi(x,t)dtdx+\Sigma(v,\e)\\
=\int_{\Omega}&\Big[v(x,T_0)\phi(x,T_0)-v^{\e}\big(x,T_0+\frac{\e}{2}\big)\phi\big(x,T_0+\frac{\e}{2}\big)\Big]dx\\
+&\int_{\Omega}\Big[v^{\e}\big(x,T_1-\frac{\e}{2}\big)\phi\big(x,T_1-\frac{\e}{2}\big)-v(x,T_1)\phi(x,T_1)\Big]dx\\
+&\int^{T_1}_{T_0}\langle f(\cdot,t),\phi^\e(\cdot,t)\rangle dt.
\end{split}
\end{equation}
We now subtract \eqref{test_v} from \eqref{test_u} and obtain
\begin{equation}\label{test_uv}
\begin{split}
\int^{T_1}_{T_0}\iint_{\R^n\times\R^n}&\Big(J_p\big(u(x,t)-u(y,t)\big)-J_p\big(v(x,t)-v(y,t)\big)\Big)\Big(\phi^{\e}(x,t)-\phi^{\e}(y,t)\Big)d\mu dt\\
&+\int_{\Omega}\int^{T_1-\frac{\e}{2}}_{T_0+\frac{\e}{2}}\partial_t \big(u^{\e}(x,t)-v^{\e}(x,t)\big)\phi(x,t)dtdx+\Sigma(u,\e)-\Sigma(v,\e)\\
=\int_{\Omega}&\Big[u(x,T_0)\phi(x,T_0)-u^{\e}\big(x,T_0+\frac{\e}{2}\big)\phi\big(x,T_0+\frac{\e}{2}\big)\Big]dx\\
-&\int_{\Omega}\Big[v(x,T_0)\phi(x,T_0)-v^{\e}\big(x,T_0+\frac{\e}{2}\big)\phi\big(x,T_0+\frac{\e}{2}\big)\Big]dx\\
+&\int_{\Omega}\Big[u^{\e}\big(x,T_1-\frac{\e}{2}\big)\phi\big(x,T_1-\frac{\e}{2}\big)-u(x,T_1)\phi(x,T_1)\Big]dx\\
-&\int_{\Omega}\Big[v^{\e}\big(x,T_1-\frac{\e}{2}\big)\phi\big(x,T_1-\frac{\e}{2}\big)-v(x,T_1)\phi(x,T_1)\Big]dx,
\end{split}
\end{equation}
for every $\phi\in L^{p}(J;X^{s,p}_0(\Omega,\Omega^\prime))\cap C^1(J;L^2(\Omega))$. With $F$ as the statement we now use \eqref{test_uv} with the choice
$$
\phi=F(u^{\e}-v^{\e})\eta:=F(u-v)^{\e}\eta.
$$
This yields
\begin{equation}\label{test_u-v}
\begin{split}
\int^{T_1}_{T_0}&\iint_{\R^n\times\R^n}\Big(J_p\big(u(x,t)-u(y,t)\big)-J_p\big(v(x,t)-v(y,t)\big)\Big)\\
&\times \Big( \big(F\big(u^{\e}(x,t)-v^{\e}(x,t)\big)\eta(t)\big)^{\e}-\big(F\big(u^{\e}(y,t)-v^{\e}(y,t)\big)\eta(t)\big)^\e\Big)d\mu dt\\
&+\int^{T_1-\frac{\e}{2}}_{T_0+\frac{\e}{2}}\int_{\Omega}\partial_t \big(u^{\e}(x,t)-v^{\e}(x,t)\big)F\big(u^{\e}(x,t)-v^{\e}(x,t)\big)\eta(t) dxdt+\Sigma(u,\e)-\Sigma(v,\e)\\
=\int_{\Omega}&\left[(u-v)\big(x,T_0\big)F\big(u-v\big)^{\e}(x,T_0)\eta(T_0)-(u-v)^{\e}\big(x,T_0+\frac{\e}{2}\big)F\big(u-v\big)^{\e}\big(x,T_0+\frac{\e}{2}\big)\eta\big(T_0+\frac{\e}{2}\big)\right]dx\\
+\int_{\Omega}&\left[(u-v)^{\e}\big(x,T_1-\frac{\e}{2}\big)F\big(u-v\big)^{\e}\big(x,T_1-\frac{\e}{2}\big)\eta\big(T_1-\frac{\e}{2}\big)-(u-v)(x,T_1)F\big(u-v\big)^{\e}(x,T_1)\eta(T_1)\right]dx.
\end{split}
\end{equation}

We observe that 
$$
\partial_t\big(u^{\e}(x,t)-v^{\e}(x,t)\big)F\big(u^{\e}(x,t)-v^{\e}(x,t)\big)\eta(t)=\partial_t\Big(\FF\big(u^{\e}(x,t)-v^{\e}(x,t)\big)\eta(t)\Big)-\FF\big(u^{\e}(x,t)-v^{\e}(x,t)\big)\eta'(t).
$$
Therefore,
\[
\begin{split}
&\int^{T_1-\frac{\e}{2}}_{T_0+\frac{\e}{2}}\int_{\Omega}\partial_t \big(u^{\e}(x,t)-v^{\e}(x,t)\big)F\big(u^{\e}(x,t)-v^{\e}(x,t)\big)\eta(t) dxdt\\
&=\int_{\Omega}\FF\big(u^{\e}(x,T_1-\frac{\e}{2})-v^{\e}(x,T_1-\frac{\e}{2})\big)\eta(T_1-\frac{\e}{2})dx\\
&-\int_{\Omega}\FF\big(u^{\e}(x,T_0+\frac{\e}{2})-v^{\e}(x,T_0+\frac{\e}{2})\big)\eta( T_0+\frac{\e}{2})dx-\int^{T_1-\frac{\e}{2}}_{T_0+\frac{\e}{2}}\int_{\Omega} \FF\big(u^{\e}(x,t)-v^{\e}(x,t)\big)\eta'(t)dx dt.
\end{split}
\]
By inserting this into \eqref{test_u-v} we obtain
\begin{equation}
\begin{split}\label{test_u_v_2}
\int^{T_1}_{T_0}&\iint_{\R^n\times\R^n}\Big(J_p\big(u(x,t)-u(y,t)\big)-J_p\big(v(x,t)-v(y,t)\big)\Big)\\
&\times  \Big( \big(F\big(u^{\e}(x,t)-v^{\e}(x,t)\big)\eta(t)\big)^{\e}-\big(F\big(u^{\e}(y,t)-v^{\e}(y,t)\big)\eta(t)\big)^\e\Big)d\mu dt\\
&+\int_{\Omega}\Big(\FF(u-v)^{\e}\big(x, T_1-\frac{\e}{2}\big)\eta(T_1-\frac{\e}{2})-\FF(u-v)^{\e}\big(x,T_0+\frac{\e}{2}\big)\eta(T_0+\frac{\e}{2})\Big)dx+\Sigma(u,\e)-\Sigma(v,\e)\\
=\int_{\Omega}&\left[(u-v)\big(x,T_0\big)F\big(u-v\big)^{\e}(x,T_0)\eta(T_0)-(u-v)^{\e}\big(x,T_0+\frac{\e}{2}\big)F\big(u-v\big)^{\e}\big(x,T_0+\frac{\e}{2}\big)\eta\big(T_0+\frac{\e}{2}\big)\right]dx\\
+\int_{\Omega}&\left[(u-v)^{\e}\big(x,T_1-\frac{\e}{2}\big)F\big(u-v\big)^{\e}\big(x,T_1-\frac{\e}{2}\big)\eta\big(T_1-\frac{\e}{2}\big)-(u-v)(x,T_1)F\big(u-v\big)^{\e}(x,T_1)\eta(T_1)\right]dx\\
+&\int^{T_1-\frac{\e}{2}}_{T_0+\frac{\e}{2}}\int_{\Omega} \FF\big(u^{\e}(x,t)-v^{\e}(x,t)\big)\eta'(t)dx dt.
\end{split}
\end{equation}

We now wish to pass $\e\to 0$ in \eqref{test_u_v_2}. We start with the second term in the right-hand side. We have
\[
\begin{split}
(u-v)^{\e}&\big(x,T_1-\frac{\e}{2}\big)F(u-v)^{\e}(x,T_1-\frac{\e}{2})-(u-v)(x,T_1)F(u-v)^{\e}(x,T_1)\\
&\leq C_{F}\Big|(u-v)^{\e}\big(x,T_1-\frac{\e}{2}\big)\Big|\Big|(u-v)^{\e}\big(x,T_1-\frac{\e}{2}\big) - (u-v)(x,T_1)\Big|\\
&+ C_{F}\big|(u-v)\big(x,T_1\big)\big|\Big|(u-v)^{\e}(x,T_1-\frac{\e}{2})-\big(u-v\big)^{\e}\big(x,T_1\big)\Big|,
\end{split}
\]
where $C_{F}$ denotes the Lipschitz constant of $F(t)$, which is indepent of $\e$. After integration this yields
\[
\begin{split}
\int_{\Omega}&(u-v)^{\e}\big(x,T_1-\frac{\e}{2}\big)F(u-v)^{\e}(x,T_1-\frac{\e}{2})-(u-v)(x,T_1)F(u-v)^{\e}(x,T_1))dx\\
&\leq C_{F}\int_{\Omega}\Big|(u-v)^{\e}\big(x,T_1-\frac{\e}{2}\big)\Big|\Big|(u-v)^{\e}\big(x,T_1-\frac{\e}{2}\big) - (u-v)(x,T_1)\Big|dx\\
&+C_{F}\int_{\Omega}\big|(u-v)\big(x,T_1\big)\big|\Big|(u-v)^{\e}(x,T_1-\frac{\e}{2})-\big(u-v\big)^{\e}\big(x,T_1\big)\Big|dx
\end{split}
\]
which converges to 0 as $\e\rightarrow 0$, by the fact that $u-v\in C_{\rm loc}((0,\infty);L^2(\Omega))$. Since $\eta(T_1-\frac{\e}{2})\to \eta(T_1)$ uniformly, this allows us to pass to the limit in this term. The same arguments may be applied to prove that the first term of the right-hand side also converges to zero.

For the term which is on the left-hand side of \eqref{test_u_v_2}
$$
\int_{\Omega}\FF (u-v)^{\e}\big(x,T_1-\frac{\e}{2}\big)\eta\big(T_1-\frac{\e}{2}\big)dx
$$
we estimate the following difference 
\[
\begin{split}
&\left|\int_{\Omega}\FF (u-v)^{\e}\big(x,T_1-\frac{\e}{2}\big)dx-\int_{\Omega}\FF (u-v)(x,T_1)dx\right|
\end{split}
\]
and follow the same steps as above to prove the convergence to 0 as $\e\rightarrow0$. The term containing $\FF (u-v)^{\e}\big(x,T_0+\frac{\e}{2}\big)$ and the last term in the right hand side may also be treated in a similar way. We omit the details.

Similar arguments also gives that the term $\Sigma(u,\e)-\Sigma(v,\e)\rightarrow 0$ as $\e\rightarrow 0$, using that
$$u-v\in L^p_{\rm loc}\big((0,\infty); W^{s,p}_0(\Omega)\big)\cap  C_{\rm loc}(I;L^2(\Omega)).
$$
In order to to treat the first term in the left-hand side we write
\[
\begin{split}
\mathcal{A}_\e=&\int^{T_1}_{T_0}\iint_{\R^n\times\R^n}\Big(J_p\big(u(x,t)-u(y,t)\big)-J_p\big(v(x,t)-v(y,t)\big)\Big)\\
&\times  \Big( \big(F\big(u^{\e}(x,t)-v^{\e}(x,t)\big)\eta(t)\big)^{\e}-\big(F\big(u^{\e}(y,t)-v^{\e}(y,t)\big)\eta(t)\big)^\e\Big)d\mu dt,
\end{split}
\]
and
\[
\begin{split}
\mathcal{A}=&\int^{T_1}_{T_0}\iint_{\R^n\times\R^n}\Big(J_p\big(u(x,t)-u(y,t)\big)-J_p\big(v(x,t)-v(y,t)\big)\Big)\\
&\times \Big( F\big(u(x,t)-v(x,t)\big)-F\big(u(y,t)-v(y,t)\big)\Big)\eta(t) d\mu dt.
\end{split}
\]
Then
\[
\begin{split}
|\mathcal{A}_{\e}-\mathcal{A}|=&\int^{T_1}_{T_0}\iint_{\R^n\times\R^n}\Big(J_p\big(u(x,t)-u(y,t)\big)-J_p\big(v(x,t)-v(y,t)\big)\Big)\\
&\times\Big(\big(F(u-v)^{\e}(x,t)\eta(t)\big)^{\e}-F(u-v)(x,t)\eta(t)\\
&-\big(\big(F(u-v)^{\e}(y,t)\eta(t)\big)^\e-F(u-v)(y,t)\eta(t)\big)\Big)d\mu dt.
\end{split}
\]
Recall that, being weak solutions, $u,v\in L^p_{\rm loc}(J;W^{s,p}(\Omega'))$ for some $\Omega\Subset\Omega^\prime$, and also $u-v$ vanishes outside $\Omega$. Therefore, 
\[
\begin{split}
|\mathcal{A}_{\e}-\mathcal{A}|&=\int^{T_1}_{T_0}\iint_{\Omega^\prime\times\Omega^\prime}\Big(J_p\big(u(x,t)-u(y,t)\big)-J_p\big(v(x,t)-v(y,t)\big)\Big)\\
&\times\Big( \big(F(u-v)^{\e}(x,t)\eta(t)\big)^{\e}-F(u-v)(x,t)\eta(t) \\
&-\big(\big(F(u-v)^{\e}(y,t)\eta(t)\big)^\e-F(u-v)(y,t)\eta(t)\big)\Big)d\mu dt\\
+2\int^{T_1}_{T_0}&\iint_{\Omega\times(\R^n\setminus\Omega^\prime)}\Big(J_p\big(u(x,t)-u(y,t)\big)-J_p\big(v(x,t)-v(y,t)\big)\Big)\\
&\times \Big(\big(F(u-v)^{\e}(x,t)\eta(t)\big)^{\e}-F(u-v)(x,t)\eta(t)\Big)d\mu dt:=\Theta_1(\e)+\Theta_2(\e).
\end{split}
\]
Since $F(t)$ is Lipschitz, $F(0)=0$ and $u-v\in L_{\rm loc}^p\big((0,\infty); W^{s,p}_0(\Omega')\big)$ we have 
$$
\int^{T_1}_{T_0}[\big(F(u-v)^{\e}(x,t)\eta(t)\big)^{\e}]^p_{W^{s,p}(\Omega')}dt\leq C_F.
$$
Hence, up to extracting a subsequence, 
$$
\big(F(u-v)^{\e}(x,t)\eta(t)\big)^{\e}\to F(u-v)(x,t)\eta(t),
$$
weakly in
$
L^p(J; W^{s,p}_0(\Omega')).
$
This means exactly that 
$$
\frac{\big(F(u-v)^{\e}(x,t)\eta(t)\big)^{\e}-\big(F(u-v)^{\e}(y,t)\eta(t)\big)^{\e}}{|x-y|^{\frac{N}{p}+s}}
$$
converges weakly in $L^{p}(J; L^{p}(\Omega'\times\Omega'))$. Using that 
$$
\frac{J_p\big(u(x,t)-u(y,t)\big)-J_p\big(v(x,t)-v(y,t)\big)}{|x-y|^{\frac{N}{p^\prime}+s(p-1)}}
$$
belongs to the space $L^{p^\prime}(J; L^{p^{\prime}}(\Omega'\times\Omega'))$, 
this implies
$$
\lim_{\e\rightarrow 0}\Theta_1(\e) = 0. 
$$
For $\Theta_2(\e)$, let $x\in \Omega$, $t\in [T_0,T_1]$ and define
$$
\mathcal{F}(x,t)=\int_{\R^n\setminus\Omega^\prime}\frac{J_p\big(u(x,t)-u(y,t)\big)-J_p\big(v(x,t)-v(y,t)}{|x-y|^{n+sp}} dy.
$$
Then 
\[
\begin{split}
|\mathcal{F}(x,t)|&\leq C(\Omega,\Omega')\int_{\R^n\setminus\Omega^\prime}\frac{|u(x,t)|^{p-1}+|v(x,t)|^{p-1}+|u(y,t)|^{p-1}+|v(y,t)|^{p-1}}{|x-y|^{n+sp}} dy \\
& \leq C(\Omega,\Omega')\left(|u(x,t)|^{p-1}+|v(x,t)|^{p-1}+\|u(\cdot,t)\|_{L^{p-1}_{sp}(\R^n)}^{p-1}+\|v(\cdot,t)\|_{L^{p-1}_{sp}(\R^n)}^{p-1}\right)\in L^{p'}(J;L^{p'}(\Omega)).
\end{split}
\]

Since $F(t)$ is Lipschitz continuous, $F(0)=0$ and $u,v\in L_{\rm loc}^p\big((0,\infty));L^p(\Omega)\big)$, we have
$$
\int^{T_1}_{T_0}\|\big(F(u-v)^{\e}(x,t)\eta(t)\big)^{\e}-F(u-v)(x,t)\eta(t)\|_{L^p(\Omega)} dt\leq C_F.
$$
We may therefore extract a subsequence such that $\big(F(u-v)^{\e}\eta\big)^{\e}\to F(u-v)\eta$ weakly in $L^p\big(J;L^p(\Omega)\big)$. This permits to conclude
$$
\lim_{\e\rightarrow 0}\Theta_2(\e) = 0.
$$
\end{proof}

\subsection{Comparison principle}
In this section, we present two comparison principles. The proofs of these two propositions are almost identical with the proof of Proposition A.6 in \cite{BLS21}. We provide some details below. We first introduce some notation from \cite{BLS21}.

As before we assume that $\Omega\Subset\Omega^\prime\subset\R^n$, where $\Omega^\prime$ is a bounded open set in $\R^n$. We define
$$
X^{s,p}_{\psi}(\Omega,\Omega^\prime)=\big\{v\in W^{s,p}(\Omega^\prime)\cap L^{p-1}_{sp}(\R^n): v=\psi\  on\ \R^n\setminus\Omega\big\},
$$
where 
$$
\psi\in W^{s,p}(\Omega^\prime)\cap L^{p-1}_{sp}(\R^n).
$$
If $\psi=0$, we have that $X^{s,p}_0(\Omega, \Omega^\prime)\subset W^{s,p}(\Omega^\prime)$.

Now we give the definition of a weak solution of the initial boundary value problem as in \cite{BLS21}. Let $I=[t_0,t_1]$.
Assume that
\[
\begin{split}
&u_0\in L^2(\Omega),\\
&f\in L^{p^\prime}(I;(X^{s,p}_0(\Omega,\Omega^\prime))^*),\\
&g\in L^p(I;W^{s,p}(\Omega^\prime))\cap L^{p-1}(I;L^{p-1}_{sp}(\R^n))\textup{ and }\partial_t g\in L^{p^\prime}(I; [W^{s,p}(\Omega^\prime)]^*).
\end{split}
\]
We say that $u$ is a weak solution of the initial boundary value problem
$$
\begin{cases}
u_t+(-\lap_p)^s u = f& \text{ in }\Omega\times I,\\
u=u_0&\text{ in } \Omega\times\{t_0\},\\
u = g &\text{ in } (\R^n\setminus\Omega)\times I,
\end{cases}
$$
if the following properties are satisfied:
\begin{itemize}
\item[1.] $u\in L^p(I;W^{s,p}(\Omega^\prime))\cap L^{p-1}(I; L^{p-1}_{sp}(\R^n))\cap C(I;L^2(\Omega))$;
\item[2.] $u\in X^{s,p}_{g}(\Omega,\Omega^\prime)$ for almost every $t\in I$;
\item[3.] $\lim_{t\rightarrow t_0}\|u(\cdot,t)-u_0\|_{L^2(\Omega)}=0$;
\item[4.] for every $J:=[T_0, T_1]\subset I$ and every $\phi\in L^{p}(J;X^{s,p}_0(\Omega,\Omega^\prime))\cap C^1(J;L^2(\Omega))$
\[
\begin{split}
-\int^{T_1}_{T_0}&\int_{\Omega}u(x,t)\partial_t\phi(x,t)dxdt\\
&+\int^{T_1}_{T_0}\iint_{\R^n\times\R^n}\big(J_p(u(x,t)-u(y,t))\times(\phi(x,t)-\phi(y,t))\big)d\mu dt\\
&=\int_{\Omega}u(x,T_0)\phi(x,T_0)dx-\int_{\Omega}u(x,T_1)\phi(x,T_1)dx + \int^{T_1}_{T_0}\langle f(\cdot,t),\phi(\cdot,t)\rangle dt.
\end{split}
\]
\end{itemize}

\begin{prop}[Comparison with subsolutions: degenerate case]\label{prop:comp1}
Let $p\geq2$, $I=[t_0,t_1]$, $\Omega\Subset\Omega^\prime$ and suppose that $v\in L^\infty(I;L^\infty(\R^n))$ is a weak subsolution of \eqref{locweaksol} in $\Omega\times I$ satisfying 
$$
v\in L^p(I; W^{s,p}(\Omega^\prime))\cap C(I;L^2(\Omega)),\quad \partial_t v\in L^{p^\prime}(I;(X^{s,p}_0(\Omega,\Omega^\prime))^*),
$$
$$
\lim\limits_{t\rightarrow t_0}\|v(\cdot,t)-v_0\|_{L^2(\Omega)}=0\quad for\ some\ v_0\in L^2(\Omega). 
$$
Consider the unique weak solution $u$ to the initial boundary value problem
\[
\begin{cases}
u_t+(-\lap_p)^s u = f& \text{ in }\Omega\times I,\\
u(\cdot,t_0)=v_0&\text{ in } \Omega,\\
u = v&\text{ in } (\R^n\setminus\Omega)\times I,
\end{cases}
\]
where
$$
f\in L^{p^\prime}(I; (W^{s,p}(\Omega^\prime))^*).
$$
Then 
$$
u\geq v, \qquad in\ \R^n\times I.
$$

\end{prop}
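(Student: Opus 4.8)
The plan is to adapt the proof of Proposition~A.6 in \cite{BLS21}: test the equation solved by $u$ and the subsolution inequality satisfied by $v$ against the positive part $(v-u)_+$, and exploit the monotonicity of $\tau\mapsto J_p(\tau)$. Write $w=v-u$; since $u=v$ on $(\mathbb{R}^n\setminus\Omega)\times I$, it is enough to show that $w_+=0$ a.e.\ in $\Omega\times I$. First observe that $w_+$ is an admissible test function: for a.e.\ $t\in I$ one has $w(\cdot,t)\in W^{s,p}(\Omega^\prime)$ and $w(\cdot,t)=0$ outside $\Omega$, hence $w_+(\cdot,t)\in X^{s,p}_0(\Omega,\Omega^\prime)$, because taking the positive part does not increase the Sobolev--Slobodecki\u{\i} seminorm and $w_+$ remains supported in $\overline{\Omega}\Subset\Omega^\prime$ so that its $L^{p-1}_{sp}$-tail is finite; moreover $w_+\ge 0$, $w_+\in C(I;L^2(\Omega))$, and $w_+(\cdot,t_0)=0$ because $\|w(\cdot,t)\|_{L^2(\Omega)}\to 0$ as $t\to t_0$.

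Since $w_+$ is not $C^1$ in time, I would first test with the time-regularizations of $w_+$ built from the mollification \eqref{convolution}, exactly as in the proof of Lemma~\ref{lem:test}, then subtract the weak formulation for $u$ (an equality) from that for $v$ (an inequality, $v$ being a subsolution), and finally let the regularization parameter tend to $0$; the $f$-terms cancel. Treating the parabolic term via the identity $w\,\partial_t w_+=\frac12\partial_t(w_+^2)$ as in Lemma~\ref{lem:test}, and working on $J=[t_0,T_1]$ for an arbitrary $T_1\in I$ (no cutoff in time near $t_0$ is needed, the initial data of $u$ and $v$ coinciding), the boundary contributions combine to give $+\frac12\int_\Omega w_+(x,T_1)^2\,dx$ on the left, and one obtains
\[
\int_{t_0}^{T_1}\iint_{\mathbb{R}^n\times\mathbb{R}^n}\Big(J_p(v(x,t)-v(y,t))-J_p(u(x,t)-u(y,t))\Big)\big(w_+(x,t)-w_+(y,t)\big)\,d\mu\,dt+\frac12\int_\Omega w_+(x,T_1)^2\,dx\le 0 .
\]
Now fix $t$ and set $a=v(x)-v(y)$, $b=u(x)-u(y)$, so $a-b=w(x)-w(y)$; since both $\tau\mapsto J_p(\tau)$ and $\tau\mapsto\tau_+$ are nondecreasing, $J_p(a)-J_p(b)$ and $w(x)_+-w(y)_+$ either vanish or have the same sign as $a-b$, so their product is $\ge 0$. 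Hence both terms on the left are nonnegative, which forces $\int_\Omega w_+(x,T_1)^2\,dx=0$; as $T_1\in I$ was arbitrary, $w_+\equiv 0$ a.e.\ in $\Omega\times I$, i.e.\ $u\ge v$ a.e.\ in $\Omega\times I$, and therefore in $\mathbb{R}^n\times I$.

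The only real work is the time-regularization step: justifying rigorously that $w_+$ may be used as a test function and passing to the limit. This is the same computation as in the proof of Lemma~\ref{lem:test}, now carried out for a \emph{subsolution} $v$ and a \emph{solution} $u$ instead of two solutions; in particular one still needs the weak-convergence argument for the nonlinear nonlocal term (the analogues of the terms $\Theta_1,\Theta_2$ there) and the control of the tail contributions with $x\in\Omega$, $y\notin\Omega^\prime$, which is where the hypotheses $v\in L^\infty(I;L^\infty(\mathbb{R}^n))$ and $\partial_t v\in L^{p^\prime}(I;(X^{s,p}_0(\Omega,\Omega^\prime))^*)$ enter. Since \cite{BLS21}, Proposition~A.6 performs this in an essentially identical setting, only the minor changes need to be recorded.
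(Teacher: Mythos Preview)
Your proposal is correct and follows essentially the same route as the paper's proof: regularize in time as in Lemma~\ref{lem:test}, test the difference of the weak formulations with $(v-u)_+$, and use monotonicity to conclude. Two small remarks: the paper works on $[T_0,T_1]\Subset(t_0,t_1)$ and only lets $T_0\to t_0$ \emph{after} removing the mollification (the convolution needs room, so integrating directly from $t_0$ is not quite legitimate), and for the nonnegativity of the nonlocal integrand the paper invokes the quantitative bound $(J_p(a-b)-J_p(c-d))((a-c)_+-(b-d)_+)\ge C_p'|(a-c)_+-(b-d)_+|^p$ from \cite{BLS}, though your simpler monotonicity observation already suffices for the conclusion.
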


\begin{proof}
Take $J=[T_0,T_1]\Subset(t_0,t_1)$, then as in the first part of the Lemma \ref{lem:test}, we obtain
\[
\begin{split}
\int^{T_1}_{T_0}&\iint_{\R^n\times\R^n}\Big(J_p(v(x,t)-v(y,t))-J_p(u(x,t)-u(y,t))\Big)\Big(\phi^{\e}(x,t)-\phi^\e(y,t)\Big)d\mu dt\\
&+\int_{\Omega}\int^{T_1-\frac{\e}{2}}_{T_0+\frac{\e}{2}}\partial_t(v^{\e}(x,t)-u^\e(x,t))\phi(x,t)dtdx+\Sigma(\e)\\
&\leq\int_{\Omega}\Big[(v(x,T_0)-u(x,T_0))\phi(x,T_0)-(v^\e(x,T_0+\frac{\e}{2})-u^\e(x,T_0+\frac{\e}{2}))\phi(x,T_0+\frac{\e}{2})\Big]dx\\
&\quad+\int_{\Omega}\Big[(v^\e(x,T_1-\frac{\e}{2})-u^\e(x,T_1-\frac{\e}{2}))\phi(x,T_1-\frac{\e}{2})-(v(x,T_1)-u(x,T_1))\phi(x,T_1)\Big]dx,
\end{split}
\]
for every nonnegative $\phi\in L^p((T_0,T_1); X^{s,p}_0(\Omega, \Omega^\prime))\cap C^1((T_0,T_1);L^2(\Omega))$. The quantity $\Sigma(\e)$ is defined as in the proof of Lemma \ref{lem:test}, with $v-u$ in place of $u$. Note that since $v$ is a subsolution, we obtain an inequality above, instead of an equality. Take the test function $\phi$ by\footnote{This choice of $\phi$ is merely Lipschitz in time. By density argument it can be seen that Lipschitz functions are also admissible as test functions.}
$$
\phi(x,t) = \left(v^\e(x,t)-u^\e(x,t)\right)_+,
$$
which gives
\[
\begin{split}
\int_{\Omega}&\int^{T_1-\frac{\e}{2}}_{T_0+\frac{\e}{2}}\partial_t(v^\e(x,t)-u^\e(x,t))\phi(x,t)dtdx\\
&=\int_{\Omega}\int^{T_1-\frac{\e}{2}}_{T_0+\frac{\e}{2}}\partial_t(v^\e(x,t)-u^\e(x,t))(v^\e(x,t)-u^\e(x,t))_+dtdx\\
&=\int_{\Omega}\int^{T_1-\frac{\e}{2}}_{T_0+\frac{\e}{2}}\frac{1}{2}\partial_t\big((v^\e(x,t)-u^\e(x,t))_+\big)^2dtdx\\
&=\frac{1}{2}\Big[\int_\Omega\big(v^\e(x,T_1-\frac{\e}{2})-u^\e(x,T_1-\frac{\e}{2})\big)^2_+dx
-\int_\Omega\big(v^\e(x,T_0+\frac{\e}{2})-u^\e(x,T_0+\frac{\e}{2})\big)^2_+dx\Big].\\
\end{split}
\]
As in the proof of Lemma \ref{lem:test}, we have that 
\[
\begin{split}
&\int_\Omega\big[\big(v(x,T_0)-u(x,T_0)\big)\phi(x,T_0)-\big(v^\e(x,T_0+\frac{\e}{2})-u^\e(x,T_0+\frac{\e}{2})\big)\phi(x,T_0+\frac{\e}{2})\big]dx\\
+&\int_\Omega\big[\big(v^\e(x,T_1-\frac{\e}{2})-u^\e(x,T_1-\frac{\e}{2})\big)\phi(x,T_1-\frac{\e}{2})-\big(v(x,T_1)-u(x,T_1)\big)\phi(x,T_1)\big]dx,
\end{split}
\]
tends to $0$ $\e\rightarrow 0$. Passing $\e\rightarrow 0$, we obtain 
\[
\begin{split}
\int^{T_1}_{T_0}&\iint_{\R^n\times\R^n}\big(J_p(v(x,t)-v(y,t))-J_p(u(x,t)-u(y,t))\big)\\
&\times\big((v(x,t)-u(x,t))_+-(v(y,t)-u(y,t))_+\big)d\mu dt\\
&+\frac{1}{2}\left[\int_{\Omega}(v(x,T_1)-u(x,T_1))^2_+dx-\int_{\Omega}(v(x,T_0)-u(x,T_0))^2_+dx\right]\leq 0.
\end{split}
\]
By the Lemma A.3 in \cite{BLS}
\[
\begin{split}
\big(&J_p(a-b)-J_p(c-d)\big)\big((a-c)_+-(b-d)_+\big)\\
&\geq C_p|(a-b)-(c-d)|^{p-1}|(a-c)_+-(b-d)_+|\geq C^\prime_p|(a-c)_+-(b-d)_+|^p,
\end{split}
\]
with constants $C_p>0$ and $C^\prime_p>0$. Therefore, for all $T_0$ and $T_1$ such that  $t_0<T_0<T_1<t_1$, there holds
\[
\begin{split}
C&\int^{T_1}_{T_0}\big[(v-u)_+\big]^p_{W^{s,p}(\R^n)}dt\\
&\leq\frac{1}{2}\left[\int_{\Omega}(v(x,T_0)-u(x,T_0))^2_+dx-\int_{\Omega}(v(x,T_1)-u(x,T_1))^2_+dx\right].
\end{split}
\]
Using the hypothesis on $v$ and $u$ together with the monotone convergence theorem, we may pass $T_0\rightarrow t_0$ and obtain
$$
0\leq C\int^{T_1}_{t_0}\big[(v-u)_+\big]^p_{W^{s,p}(\R^n)}dt\leq -\frac{1}{2}\int_{\Omega}\big(v(x,T_1)-u(x,T_1)\big)^2_+dx.
$$
This implies that
$$
u(x,T_1)\geq v(x,T_1)
$$
 for a.e. $x\in\Omega$. Since $T_1$ is arbitrary, the proof is complete.

\end{proof}

\begin{prop}[Comparison with subsolutions: singular case]\label{prop:comp2}
Let $1<p<2$, $I=[t_0,t_1]$, $\Omega\Subset\Omega^\prime$ and suppose that $v\in L^\infty(I;L^\infty(\R^n))$ is a weak subsolution of \eqref{locweaksol} in $\Omega\times I$ satisfying 
$$
v\in L^p(I; W^{s,p}(\Omega^\prime))\cap C(I;L^2(\Omega)),\quad \partial_t v\in L^{p^\prime}(I;(X^{s,p}_0(\Omega,\Omega^\prime))^*),
$$
$$
\lim\limits_{t\rightarrow t_0}\|v(\cdot,t)-v_0\|_{L^2(\Omega)}=0\quad for\ some\ v_0\in L^2(\Omega). 
$$
Consider the unique weak solution $u$ to the initial boundary value problem
\[
\begin{cases}
u_t+(-\lap_p)^s u = f& \text{ in }\Omega\times I,\\
u(\cdot,t_0)=v_0&\text{ in } \Omega,\\
u = v&\text{ in } (\R^n\setminus\Omega)\times I,
\end{cases}
\]
where
$$
f\in L^{p^\prime}(I; (W^{s,p}(\Omega^\prime))^*).
$$
Then 
$$
u\geq v, \qquad in\ \R^n\times I.
$$

\end{prop}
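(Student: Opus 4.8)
The plan is to reproduce the proof of Proposition~\ref{prop:comp1} almost verbatim; the only genuinely new point is that, for $1<p<2$, the coercive pointwise inequality used there must be replaced by a bare sign (monotonicity) inequality, which is in fact all that the comparison argument requires. Fix $J=[T_0,T_1]\Subset(t_0,t_1)$. Time-regularizing as in the first part of Lemma~\ref{lem:test} and using that $v$ is a \emph{subsolution}, one obtains, for every nonnegative $\phi\in L^{p}(J;X^{s,p}_0(\Omega,\Omega^\prime))\cap C^1(J;L^2(\Omega))$ and all small $\e>0$, exactly the inequality displayed at the beginning of the proof of Proposition~\ref{prop:comp1}, with $v-u$ in place of $u$ in the definition of $\Sigma(\e)$. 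I would then choose
\[
\phi(x,t)=\big(v^\e(x,t)-u^\e(x,t)\big)_+,
\]
which is admissible by the same density remark as in the degenerate case; note that $v^\e=u^\e$ on $\R^n\setminus\Omega$ because $u=v$ there, so $\phi$ is supported in $\Omega$.

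For this choice the parabolic term telescopes into $\tfrac12\int_\Omega\big(v^\e-u^\e\big)^2_+\,dx$ evaluated at the shifted endpoints $T_1-\tfrac{\e}{2}$ and $T_0+\tfrac{\e}{2}$, exactly as in Proposition~\ref{prop:comp1}. The passage $\e\to0$ is then carried out by the very same estimates used to prove Lemma~\ref{lem:test}: the boundary correction terms and the term $\Sigma(u,\e)-\Sigma(v,\e)$ vanish because $u,v\in C_{\rm loc}(I;L^2(\Omega))$ and $u-v\in L^p_{\rm loc}(I;W_0^{s,p}(\Omega))$, while the nonlocal term converges by the weak-convergence argument there --- and none of those estimates uses $p\ge2$. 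In the limit one arrives at
\[
\begin{split}
\int^{T_1}_{T_0}\iint_{\R^n\times\R^n}&\big(J_p(v(x,t)-v(y,t))-J_p(u(x,t)-u(y,t))\big)\\
&\times\big((v(x,t)-u(x,t))_+-(v(y,t)-u(y,t))_+\big)\,d\mu\,dt\\
&+\tfrac12\Big[\int_{\Omega}(v(x,T_1)-u(x,T_1))^2_+\,dx-\int_{\Omega}(v(x,T_0)-u(x,T_0))^2_+\,dx\Big]\le 0.
\end{split}
\]

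The only $p$-dependent ingredient is the treatment of the double integral. For $p\ge2$ it is bounded below by a positive multiple of $\int_{T_0}^{T_1}[(v-u)_+]^p_{W^{s,p}(\R^n)}\,dt$; for $p<2$ no such bound holds, but none is needed. Since $J_p$ is strictly increasing, $J_p(a-b)-J_p(c-d)$ has the same sign as $(a-b)-(c-d)=(a-c)-(b-d)$; and since $t\mapsto t_+$ is nondecreasing, $(a-c)_+-(b-d)_+$ has the same sign as $(a-c)-(b-d)$ as well. Hence
\[
\big(J_p(a-b)-J_p(c-d)\big)\big((a-c)_+-(b-d)_+\big)\ge 0
\]
for every $p>1$ and all reals $a,b,c,d$ (cf.\ Lemma A.3 in \cite{BLS}). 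Thus the double integral is nonnegative, so $\int_{\Omega}(v(x,T_1)-u(x,T_1))^2_+\,dx\le \int_{\Omega}(v(x,T_0)-u(x,T_0))^2_+\,dx$. Letting $T_0\to t_0$ and using $\lim_{t\to t_0}\|v(\cdot,t)-v_0\|_{L^2(\Omega)}=0$ together with $u(\cdot,t_0)=v_0$, the right-hand side tends to $0$, so $(v(\cdot,T_1)-u(\cdot,T_1))_+=0$ a.e.\ in $\Omega$. Since $T_1\in(t_0,t_1)$ is arbitrary and $u=v$ on $(\R^n\setminus\Omega)\times I$, we conclude $u\ge v$ in $\R^n\times I$.

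I do not expect a genuinely new obstacle: the delicate step is the $\e\to0$ limit in the nonlocal term, which is identical to the one already handled in Lemma~\ref{lem:test} and Proposition~\ref{prop:comp1} and is valid for all $1<p<\infty$. The point that deserves attention is only that replacing the coercive inequality by the sign inequality above still closes the argument; it does, precisely because $u$ and $v$ share the initial datum $v_0$, so the right-hand side genuinely vanishes as $T_0\to t_0$ and no quantitative control of $(v-u)_+$ is required.
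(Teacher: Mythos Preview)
Your proposal is correct and follows essentially the same route as the paper: reproduce the regularization/test-function argument of Proposition~\ref{prop:comp1}, then replace the coercive lower bound on the integrand by the mere nonnegativity
\[
\big(J_p(a-b)-J_p(c-d)\big)\big((a-c)_+-(b-d)_+\big)\ge 0,
\]
and close by sending $T_0\to t_0$. The paper argues this nonnegativity by writing $J_p(A)-J_p(B)=c(|A|,|B|,p)\,(A-B)$ with $c\ge0$ (via $J_p(A)-J_p(B)=(p-1)\int_B^A|t|^{p-2}\,dt$) and then using $(A-B)(A_+-B_+)\ge(A_+-B_+)^2$; your sign argument based on the monotonicity of $J_p$ and of $t\mapsto t_+$ is the same idea stated more concisely.
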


\begin{proof}
By following the same steps as in the proof of Proposition \ref{Prop:Degenerate}, we obtain
\[
\begin{split}
\int^{T_1}_{T_0}&\iint_{\R^n\times\R^n}\big(J_p(v(x,t)-v(y,t))-J_p(u(x,t)-u(y,t))\big)\\
&\times\big((v(x,t)-u(x,t))_+-(v(y,t)-u(y,t))_+\big)d\mu dt\\
&\leq-\frac{1}{2}\left[\int_{\Omega}(v(x,T_1)-u(x,T_1))^2_+dx-\int_{\Omega}(v(x,T_0)-u(x,T_0))^2_+dx\right].
\end{split}
\]
Since 
$$
|a|^{p-2}a-|b|^{p-2}b=(p-1)\int^{a}_{b}|t|^{p-2}dt=c(|a|,|b|,p)(a-b),
$$
in which $c(|a|,|b|,p)$ is a non-negtive term depending on $|a|$, $|b|$ and $p$, for the nonlocal term in the first integral on the left hand side we have that
\[
\begin{split}
\int^{T_1}_{T_0}&\iint_{\R^n\times\R^n}\big(J_p(v(x,t)-v(y,t))-J_p(u(x,t)-u(y,t))\big)\\
&\times\big((v(x,t)-u(x,t))_+-(v(y,t)-u(y,t))_+\big)d\mu dt\\
&=(p-1)\int^{T_1}_{T_0}\iint_{\R^n\times\R^n}c(|v|,|u|,p)\big(v(x,t)-v(y,t)-(u(x,t)-u(y,t))\big)\\
&\times\big((v(x,t)-u(x,t))_+-(v(y,t)-u(y,t))_+\big)d\mu dt.
\end{split}
\]
Then since
\[
\begin{split}
(a-b)&(a_+-b_+)=\big(a_++a_--(b_++b_-)\big)(a_+-b_+)\\
&=a^2_++b^2_+-2a_+b_+-a_+b_--a_-b_+\geq(a_+-b_+)^2
\end{split}
\]
where $t_+$ and $t_-$ denote the positive part and negative part of $t$ and $t=t_++t_-$, we have that
\[
\begin{split}
0\leq(p-1)\int^{T_1}_{T_0}&\iint_{\R^n\times\R^n}c(|v|,|u|,p)\big((v(x,t)-u(x,t))_+-(v(y,t)-u(y,t))_+\big)^2d\mu dt\\
&\leq\int^{T_1}_{T_0}\iint_{\R^n\times\R^n}\big(J_p(v(x,t)-v(y,t))-J_p(u(x,t)-u(y,t))\big)\\
&\times\big((v(x,t)-u(x,t))_+-(v(y,t)-u(y,t))_+\big)d\mu dt\\
&\leq-\frac{1}{2}\big[\int_{\Omega}(v(x,T_1)-u(x,T_1))^2_+dx-\int_{\Omega}(v(x,T_0)-u(x,T_0))^2_+dx\big].
\end{split}
\]
By letting $T_0\to t_0$, we obtain the desired result.
\end{proof}

\section{Pointwise inequalities}

\begin{lem}\label{lem:ineq}
Let $p>2$ and $q\ge 1$. For every $a,b\in\mathbb{R}$ we have
\begin{equation}
J_{q}(a-b)\,\Big(J_p(a)-J_p(b)\Big)\ge (p-1)\,\left(\frac{q}{p-2+q}\right)^q\, \left||a|^\frac{p-2}{q} a-|b|^\frac{p-2}{q} \right|^q.
\end{equation}
\end{lem}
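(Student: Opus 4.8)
The plan is to reduce the inequality, by means of two elementary integral representations, to a single application of Jensen's inequality.

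\textbf{Step 1: reduce to the case $a>b$.} If $a=b$ both sides vanish. Swapping $a$ and $b$ turns $J_q(a-b)$ into $-J_q(a-b)$ and $J_p(a)-J_p(b)$ into $-(J_p(a)-J_p(b))$, so the left-hand side is unchanged; the right-hand side is manifestly symmetric in $a,b$. Hence it suffices to treat $a>b$, in which case $J_q(a-b)=(a-b)^{q-1}\ge 0$.

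\textbf{Step 2: integral representations.} Since $\frac{d}{dt}\big(|t|^{p-2}t\big)=(p-1)|t|^{p-2}$ and $p>2$ makes $|t|^{p-2}$ continuous, one has
\[
J_p(a)-J_p(b)=(p-1)\int_b^a |t|^{p-2}\,dt .
\]
Likewise, setting $g(t)=|t|^{\frac{p-2}{q}}t$ gives $g'(t)=\frac{p-2+q}{q}|t|^{\frac{p-2}{q}}$, so
\[
|a|^{\frac{p-2}{q}}a-|b|^{\frac{p-2}{q}}b=g(a)-g(b)=\frac{p-2+q}{q}\int_b^a|t|^{\frac{p-2}{q}}\,dt\ \ge 0 .
\]
Inserting these into the claimed inequality, the factor $(p-1)$ cancels and the constant $\big(\tfrac{q}{p-2+q}\big)^q$ exactly cancels the factor $\big(\tfrac{p-2+q}{q}\big)^q$ coming from $|g(a)-g(b)|^q$. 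Thus the statement is equivalent to
\[
(a-b)^{q-1}\int_b^a|t|^{p-2}\,dt\ \ge\ \left(\int_b^a|t|^{\frac{p-2}{q}}\,dt\right)^{q}.
\]

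\textbf{Step 3: Jensen's inequality.} Write $r=\frac{p-2}{q}>0$, so that $p-2=qr$. On the interval $[b,a]$ equipped with the normalised Lebesgue measure $\frac{dt}{a-b}$, apply Jensen's inequality with the convex function $x\mapsto x^{q}$ (convex because $q\ge1$) and the nonnegative function $t\mapsto|t|^{r}$:
\[
\frac{1}{a-b}\int_b^a|t|^{qr}\,dt=\frac{1}{a-b}\int_b^a\big(|t|^{r}\big)^{q}\,dt\ \ge\ \left(\frac{1}{a-b}\int_b^a|t|^{r}\,dt\right)^{q}.
\]
Multiplying through by $(a-b)^{q}$ yields precisely the inequality of Step 2, completing the argument.

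\textbf{Main obstacle.} The proof is short, so the only genuine point is to recognise the substitution $g(t)=|t|^{(p-2)/q}t$ which converts the awkward constant $\big(\tfrac{q}{p-2+q}\big)^q$ into the normalising factor needed for Jensen; once this is seen, everything is routine. One should also check that the integral representations remain valid when $0\in(b,a)$, but this causes no difficulty since only $|t|$ enters the integrands and $|t|^{p-2}$, $|t|^{(p-2)/q}$ are continuous for $p>2$, $q\ge 1$.
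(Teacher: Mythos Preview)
Your proof is correct. The paper does not actually prove this lemma at all; it merely records it as ``Lemma A.1 in \cite{BLS}'' and moves on, so there is no in-paper argument to compare against. Your route via the two integral representations and Jensen's inequality is the standard one (and is essentially how the result is obtained in the cited reference), so nothing further is needed.
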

This is Lemma A.1 in \cite{BLS}.

\begin{lem}\label{lem:ineq2}
Let $p\ge 2$, $\gamma\ge 1$ and $a,b,c,d\in\mathbb{R}$. Then
\begin{equation}
\label{erik}
\begin{split}
\Big(J_p(a-c)-J_p(b-d)\Big)&\Big(J_{\gamma+1}(a-b)-J_{\gamma+1}(c-d)\Big)\\
&\ge C\, \Big||a-b|^\frac{\gamma-1}{p}\,(a-b)-|c-d|^\frac{\gamma-1}{p}\,(c-d)\Big|^p,
\end{split}
\end{equation}
where
$$
C(p,\gamma)=\frac{\gamma}{3\cdot 2^{p-1}}\left(\frac{p}{\gamma-1+p}\right)^p.
$$ 
\end{lem}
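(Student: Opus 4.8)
The plan is to split the left-hand side of \eqref{erik} into two nonnegative monotone increments linked by a common sign, bound the first by a one-variable calculus estimate and the second by Lemma~\ref{lem:ineq}.

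First I would introduce $\delta := (a-b)-(c-d)$ and record the elementary identity $\delta = (a-c)-(b-d)$, so that one and the same number $\delta$ governs both pairs of arguments. Since $t\mapsto J_r(t)$ is strictly increasing for every $r>1$, both $J_p(a-c)-J_p(b-d)$ and $J_{\gamma+1}(a-b)-J_{\gamma+1}(c-d)$ carry the sign of $\delta$; hence the left-hand side of \eqref{erik} is nonnegative and equals $|J_p(a-c)-J_p(b-d)|\,|J_{\gamma+1}(a-b)-J_{\gamma+1}(c-d)|$. The case $\delta=0$ being trivial, I would assume $\delta\neq0$ from now on.

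For the first factor I would use $J_p(x)-J_p(y)=(p-1)\int_y^x|\tau|^{p-2}\,d\tau$: thus $|J_p(a-c)-J_p(b-d)|$ is $(p-1)$ times the integral of $|\tau|^{p-2}$ over the interval of length $|\delta|$ with endpoints $b-d$ and $a-c$. Since $p\ge2$, the integrand is nondecreasing in $|\tau|$, so this integral is least when the interval is centered at the origin, which gives $|J_p(a-c)-J_p(b-d)|\ge 2^{2-p}|\delta|^{p-1}$ (any of the usual monotonicity lower bounds of this type will do equally well). For the second factor, taking first $\gamma>1$, I would apply Lemma~\ref{lem:ineq} with the roles $p\mapsto\gamma+1\ (>2)$ and $q\mapsto p\ (\ge1)$, evaluated at the points $a-b$ and $c-d$ in place of $a$ and $b$; this yields
$$
J_p\big((a-b)-(c-d)\big)\,\big(J_{\gamma+1}(a-b)-J_{\gamma+1}(c-d)\big)\ \ge\ \gamma\Big(\frac{p}{\gamma-1+p}\Big)^p\,\Big||a-b|^{\frac{\gamma-1}{p}}(a-b)-|c-d|^{\frac{\gamma-1}{p}}(c-d)\Big|^p,
$$
whose left-hand side equals $|\delta|^{p-1}\,|J_{\gamma+1}(a-b)-J_{\gamma+1}(c-d)|$ by the sign remark. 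In the borderline case $\gamma=1$ the second factor is merely $\delta$, and \eqref{erik} already follows from the first-factor bound, both sides being multiples of $|\delta|^p$.

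Chaining the first-factor bound with Lemma~\ref{lem:ineq} (applied to the remaining product $|\delta|^{p-1}|J_{\gamma+1}(a-b)-J_{\gamma+1}(c-d)|$) bounds the left-hand side of \eqref{erik} below by $2^{2-p}\,\gamma\big(\tfrac{p}{\gamma-1+p}\big)^p$ times $\big||a-b|^{\frac{\gamma-1}{p}}(a-b)-|c-d|^{\frac{\gamma-1}{p}}(c-d)\big|^p$; since $2^{2-p}=\tfrac{2}{2^{p-1}}\ge\tfrac{1}{3\cdot2^{p-1}}$, this is at least $C(p,\gamma)$ times that quantity, which is exactly \eqref{erik}. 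The step I expect to be the crux is the opening one: observing that the two arguments of $J_p$ and the two arguments of $J_{\gamma+1}$ differ by the \emph{same} quantity $\delta=a-b-c+d$, which is what makes the product in \eqref{erik} a product of two nonnegative increments and so legitimizes the factor-by-factor estimate; thereafter the choice $q=p$ in Lemma~\ref{lem:ineq} is precisely what turns $|J_p(\delta)|\cdot|J_{\gamma+1}\text{-increment}|$ into the $p$-th power of the $\tfrac{\gamma-1}{p}$-power difference. The hypothesis $p\ge2$ is used only in the first-factor estimate; for $1<p<2$ the integrand $|\tau|^{p-2}$ is largest near $0$ and that bound changes shape.
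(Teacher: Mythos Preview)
Your proof is correct and follows the same strategy as the paper's reference: split the product into two nonnegative factors via the common increment $\delta=(a-b)-(c-d)=(a-c)-(b-d)$, bound the first factor by a $|t-s|^{p-1}$-type monotonicity estimate for $J_p$, and bound the second factor by Lemma~\ref{lem:ineq} with the substitution $p\mapsto\gamma+1$, $q\mapsto p$. The only difference is cosmetic: the paper invokes the inequality $|J_p(t)-J_p(s)|\ge\tfrac{1}{3\cdot 2^{p-1}}|t-s|^{p-1}$ (from \cite{Lin14}), whereas your centered-interval argument gives the sharper constant $2^{2-p}$, which you then correctly relax to $\tfrac{1}{3\cdot 2^{p-1}}$ to match the stated $C(p,\gamma)$; your separate handling of the borderline $\gamma=1$ (where Lemma~\ref{lem:ineq} is not directly applicable since it assumes a strict inequality) is also appropriate.
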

This is Lemma A.5 in \cite{BLS}. If we track the constants carefully we see that 
$$
C(p,\gamma)=\frac{\gamma}{3\cdot 2^{p-1}}\left(\frac{p}{\gamma-1+p}\right)^p.
$$ \\
Note that there is an exponent 2 which should be $q$ in the proof of this in \cite{BLS}. Here we have only used the estimated
$$
|J_p(t)-J_p(s)|\geq \frac{1}{3\cdot 2^{p-1}}|t-s|^{p-1}, 
$$
which follows from Lemma 2 in \cite{Lin14}.\\

\begin{lem}\label{lem:ineqM}
Let $p\ge 2$, $\gamma\ge 1$ and $a,b,c,d\in\mathbb{R}$. Then
\begin{equation}
\label{erikM}
\begin{split}
\Big(J_p(a-c)-J_p(b-d)\Big)&\Big(J^M_{\gamma+1}(a-b)-J^M_{\gamma+1}(c-d)\Big)\\
&\ge C\, \Big||a-b|^\frac{\gamma-1}{p}\,(a-b)_M-|c-d|^\frac{\gamma-1}{p}\,(c-d)_M\Big|^p,
\end{split}
\end{equation}
where
$$
C(p,\gamma)=\frac{\gamma}{3\cdot 2^{p-1}}\left(\frac{p}{\gamma-1+p}\right)^p.
$$ 
\end{lem}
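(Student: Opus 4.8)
The plan is to deduce Lemma \ref{lem:ineqM} from Lemma \ref{lem:ineq} in the same way Lemma \ref{lem:ineq2} is, the only new ingredient being that one feeds the \emph{truncated} arguments into Lemma \ref{lem:ineq}. First I would set $A=a-c$, $B=b-d$, $X=a-b$, $Y=c-d$ and record the algebraic identity $A-B=X-Y$. If $A=B$ then $X=Y$, hence $X_M=Y_M$, and both sides of \eqref{erikM} vanish; otherwise, after possibly replacing $(a,b,c,d)$ by $(b,a,d,c)$ — a substitution that changes the sign of each of $J_p(a-c)-J_p(b-d)$, $J^M_{\gamma+1}(a-b)-J^M_{\gamma+1}(c-d)$ and of the expression inside the absolute value on the right, hence leaves \eqref{erikM} intact — I may assume $A>B$, equivalently $X>Y$.

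The key point is that $t\mapsto t_M$ is nondecreasing and $1$-Lipschitz, so $0\le X_M-Y_M\le X-Y=A-B$. Since $J_p$ is increasing, $J_p(A)>J_p(B)$, and the elementary bound $|J_p(t)-J_p(s)|\ge (3\cdot 2^{p-1})^{-1}|t-s|^{p-1}$ (Lemma 2 of \cite{Lin14}), together with $p-1\ge 1$, then gives
\[
J_p(A)-J_p(B)\ \ge\ \frac{(A-B)^{p-1}}{3\cdot 2^{p-1}}\ \ge\ \frac{(X_M-Y_M)^{p-1}}{3\cdot 2^{p-1}}\ =\ \frac{J_p(X_M-Y_M)}{3\cdot 2^{p-1}}\, ,
\]
where the last equality uses $X_M-Y_M\ge 0$. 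Multiplying this by the nonnegative number $J^M_{\gamma+1}(a-b)-J^M_{\gamma+1}(c-d)=J_{\gamma+1}(X_M)-J_{\gamma+1}(Y_M)$ yields
\[
\Big(J_p(a-c)-J_p(b-d)\Big)\Big(J^M_{\gamma+1}(a-b)-J^M_{\gamma+1}(c-d)\Big)\ \ge\ \frac{J_p(X_M-Y_M)\big(J_{\gamma+1}(X_M)-J_{\gamma+1}(Y_M)\big)}{3\cdot 2^{p-1}}\, .
\]

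To close the argument for $\gamma>1$ I would apply Lemma \ref{lem:ineq} to the pair $(X_M,Y_M)$, with $\gamma+1$ now playing the role of the exponent $p$ in that lemma (so its hypothesis becomes $\gamma+1>2$) and $p$ playing the role of $q$, obtaining
\[
J_p(X_M-Y_M)\big(J_{\gamma+1}(X_M)-J_{\gamma+1}(Y_M)\big)\ \ge\ \gamma\left(\frac{p}{\gamma-1+p}\right)^p\Big||X_M|^{\frac{\gamma-1}{p}}X_M-|Y_M|^{\frac{\gamma-1}{p}}Y_M\Big|^p\, ;
\]
combined with the previous display this is exactly the claimed inequality, with $C(p,\gamma)=\gamma\,(3\cdot 2^{p-1})^{-1}\big(p/(\gamma-1+p)\big)^p$. (As in the application to Proposition \ref{Prop:Degenerate}, the right-hand side of \eqref{erikM} is here to be read with base $|(a-b)_M|$ rather than $|a-b|$.) The endpoint $\gamma=1$ must be treated separately, since Lemma \ref{lem:ineq} needs the first exponent strictly above $2$, but there it is immediate: $J^M_2(t)=t_M$, the right-hand side of \eqref{erikM} reduces to $(3\cdot 2^{p-1})^{-1}|X_M-Y_M|^p$, and $J_p(X_M-Y_M)\big(J_2(X_M)-J_2(Y_M)\big)=(X_M-Y_M)^p$, so the conclusion is already contained in the last display of the previous paragraph. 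I expect the only genuinely delicate point to be this interface: the factor $J_p(a-c)-J_p(b-d)$ is built from the \emph{untruncated} differences while the conclusion involves truncated quantities, and it is precisely the identity $A-B=X-Y$ together with the $1$-Lipschitz property of $t\mapsto t_M$ that lets one pass from $A-B$ to $X_M-Y_M$ without reversing the inequality.
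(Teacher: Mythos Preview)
Your argument is correct (for the intended statement with truncated base $|(a-b)_M|$ on the right-hand side, which is indeed how the lemma is applied in Proposition~\ref{Prop:Degenerate} and how the paper's own proof reads), but the route differs from the paper's. The paper argues by a case analysis on the position of $a-b$ and $c-d$ relative to the truncation window $[-M,M]$: in each nontrivial case it replaces the untruncated quantity by its boundary value (e.g.\ $a\mapsto M+b$ when $a-b>M$), uses the monotonicity of $J_p$ to justify this replacement, and then invokes the untruncated Lemma~\ref{lem:ineq2} on the modified quadruple. Your approach instead observes once and for all that the identity $(a-c)-(b-d)=(a-b)-(c-d)$ combined with the $1$-Lipschitz, nondecreasing property of $t\mapsto t_M$ gives $0\le X_M-Y_M\le A-B$, which lets you feed the truncated pair $(X_M,Y_M)$ directly into Lemma~\ref{lem:ineq} without splitting into cases. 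This is cleaner and more transparent about \emph{why} truncation does not spoil the inequality; the price is the separate treatment of the endpoint $\gamma=1$ (where Lemma~\ref{lem:ineq} is unavailable), which you handle correctly. The paper's case analysis avoids that endpoint issue since Lemma~\ref{lem:ineq2} is stated for $\gamma\ge 1$, but at the cost of checking several configurations by hand.
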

\begin{proof} By symmetry, it is enough to treat the following cases:\\
\noindent {\bf Case 1: } $|c-d|\leq M, a-b>M$. Then the LHS becomes
$$
\Big(J_p(a-c)-J_p(b-d)\Big)\Big(J_{\gamma+1}(M)-J_{\gamma+1}(c-d)\Big).
$$
Note that in this case, both factors are non-negative. Also, since $a>M+b$ we can bound this from below by
$$
\Big(J_p(M+b-c)-J_p(b-d)\Big)\Big(J_{\gamma+1}(M)-J_{\gamma+1}(c-d)\Big)\geq  C\, \Big||M|^\frac{\gamma-1}{p}\,M-|c-d|^\frac{\gamma-1}{p}\,(c-d)_M\Big|^p,
$$
by Lemma \ref{lem:ineq2}. This proves the inequality in this case.

\noindent {\bf Case 2: } $|c-d|\leq M, a-b<-M$. Then the LHS becomes
$$
\Big(J_p(a-c)-J_p(b-d)\Big)\Big(J_{\gamma+1}(-M)-J_{\gamma+1}(c-d)\Big).
$$
Note that in this case, both factors are non-positive. Since $a>-M+b$ we can bound this from below by
$$
\Big(J_p(-M+b-c)-J_p(b-d)\Big)\Big(J_{\gamma+1}(-M)-J_{\gamma+1}(c-d)\Big)\geq  C\, \Big||M|^\frac{\gamma-1}{p}\,(-M)-|c-d|^\frac{\gamma-1}{p}\,(c-d)_M\Big|^p,
$$
by Lemma \ref{lem:ineq2}. This proves the inequality in this case.

\noindent {\bf Case 3: } $c-d<- M, a-b> M$. Then the LHS becomes
$$
\Big(J_p(a-c)+J_p(d-b)\Big)\Big(J_{\gamma+1}(M)+J_{\gamma+1}(M)\Big).
$$
Using that $a>b+M$ and $d>c+M$ we obtain the lower bound
$$
\Big(J_p(M+b-c)+J_p(M+c-b)\Big)\Big(J_{\gamma+1}(M)+J_{\gamma+1}(M)\Big) \geq  C\, \Big||M|^\frac{\gamma-1}{p}\,M+|M|^\frac{\gamma-1}{p}\,M\Big|^p,
$$
by Lemma \ref{lem:ineq2}. This proves the inequality in this case.
\end{proof}

\begin{lem} \label{lemma:simple inequality}
Let $p>1$ and $a,b \in \R^n$. Then
\[
\left||a|^{p-2}a-|b|^{p-2}b\right| \leq c\,(|b|+|a-b|)^{p-2}|a-b|,
\]
where $c$ depends only on $p$.
\end{lem}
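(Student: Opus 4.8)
The plan is to reduce everything to a one–dimensional integral along the segment joining $b$ to $a$. Write $g(\xi)=|\xi|^{p-2}\xi$ for $\xi\in\R^n$ and put $R=|b|+|a-b|$; the case $a=b$ is trivial, so assume $a\neq b$. The target then follows from the two assertions
\[
|g(a)-g(b)|\le \max(1,p-1)\,|a-b|\;\mathcal I,\qquad \mathcal I:=\int_0^1 \big|(1-t)\,b+t\,a\big|^{p-2}\,dt\le C(p)\,R^{p-2},
\]
which combine to give the statement with $c=c(p)=\max(1,p-1)\,C(p)$.

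For the first assertion I would use that $g$ is continuous on $\R^n$ (here $p-1>0$ is what makes $g$ continuous at the origin) and is $C^1$ away from the origin, with $Dg(\xi)=|\xi|^{p-2}\,\mathrm{Id}+(p-2)\,|\xi|^{p-4}\,\xi\otimes\xi$; its eigenvalues are $|\xi|^{p-2}$ (on $\xi^{\perp}$) and $(p-1)|\xi|^{p-2}$ (along $\xi$), so $\|Dg(\xi)\|\le \max(1,p-1)\,|\xi|^{p-2}$. The segment $\xi(t)=(1-t)b+ta$ meets the origin in at most one point $t_0$, and there $\xi(t)=(t-t_0)(a-b)$, so $|\xi(t)|=|t-t_0|\,|a-b|$; hence $|Dg(\xi(t))(a-b)|$ is dominated by a constant times $|t-t_0|^{p-2}$, which is integrable exactly because $p-2>-1$. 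Therefore $t\mapsto g(\xi(t))$ is absolutely continuous on $[0,1]$ and the fundamental theorem of calculus gives $g(a)-g(b)=\int_0^1 Dg(\xi(t))(a-b)\,dt$, yielding the first assertion.

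For the bound on $\mathcal I$ I distinguish $p\ge 2$ from $1<p<2$. If $p\ge 2$ then $p-2\ge 0$ and $|\xi(t)|\le (1-t)|b|+t|a|\le \max(|a|,|b|)\le R$, so $\mathcal I\le R^{p-2}$. If $1<p<2$: when $|a-b|\le |b|/2$ one has $|\xi(t)|\ge |b|-|a-b|\ge |b|/2\ge R/3$ (using $R\le \tfrac32|b|$), and since $p-2<0$ this gives $\mathcal I\le 3^{2-p}R^{p-2}$; when $|a-b|>|b|/2$, writing $\xi(t)=b+\sigma\,\mathbf e$ with $\mathbf e=(a-b)/|a-b|$ and $\sigma=t|a-b|$ gives $|\xi(t)|^2=(\sigma-\sigma_*)^2+\delta^2\ge(\sigma-\sigma_*)^2$ for suitable $\sigma_*\in\R$ and $\delta\ge0$, so
\[
\mathcal I=\frac1{|a-b|}\int_0^{|a-b|}\big((\sigma-\sigma_*)^2+\delta^2\big)^{\frac{p-2}{2}}\,d\sigma\le\frac1{|a-b|}\int_0^{|a-b|}|\sigma-\sigma_*|^{p-2}\,d\sigma\le\frac{2^{2-p}}{p-1}\,|a-b|^{p-2},
\]
where the last step uses that the integral of the symmetric–decreasing function $\sigma\mapsto|\sigma-\sigma_*|^{p-2}$ over an interval of length $|a-b|$ is maximal when the interval is centered at $\sigma_*$; and since $|a-b|>|b|/2$ forces $|a-b|>R/3$, this is $\le \tfrac{6^{2-p}}{p-1}R^{p-2}$. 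Hence $C(p)=\max\big(1,\,3^{2-p},\,\tfrac{6^{2-p}}{p-1}\big)$ works in all cases.

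I expect the only genuinely delicate point to be the singular regime $1<p<2$: one must justify the fundamental theorem of calculus across the single zero of $\xi(t)$ — which is precisely where the hypothesis $p>1$ is used — and then dominate $\int_0^1|\xi(t)|^{p-2}\,dt$ by $(|b|+|a-b|)^{p-2}$, which the elementary line–geometry computation above handles after splitting off the size of $|a-b|$ relative to $|b|$. The case $p\ge2$, by contrast, is essentially immediate.
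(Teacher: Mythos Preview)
Your argument is correct. The mean-value representation $g(a)-g(b)=\int_0^1 Dg(\xi(t))(a-b)\,dt$ is justified by the integrability of $t\mapsto|\xi(t)|^{p-2}$ near the (at most one) zero of $\xi$, which uses precisely $p>1$; the eigenvalue bound $\|Dg(\xi)\|\le \max(1,p-1)\,|\xi|^{p-2}$ is right; and your case split for bounding $\mathcal I$ is clean, with the rearrangement step in the singular sub-case handled correctly (the integrand $|\sigma-\sigma_*|^{p-2}$ is indeed symmetric and radially non-increasing about $\sigma_*$ when $p<2$, so the integral over any interval of given length is maximized when centered at $\sigma_*$).

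As for comparison with the paper: there is nothing to compare. The paper does not prove this lemma at all but simply cites it as Lemma~3.4 in \cite{KKL}. Your self-contained argument is therefore strictly more than what the paper provides, and makes the note independent of that reference for this step.
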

This is Lemma 3.4 in \cite{KKL}.

\begin{lem} \label{lemma:sing_ineq_1}
Let $2>p>1$ and $a,b \in \R^n$. Then
\[
(p-1)\frac{|a-b|^2}{(|a|+|b|)^{2-p}} \leq \left(|a|^{p-2}a-|b|^{p-2}b\right)\cdot (a-b).
\]
\end{lem}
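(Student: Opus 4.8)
The plan is to reduce the claimed monotonicity estimate to a pointwise bound along the segment joining $b$ to $a$, using the fundamental theorem of calculus for the vector field $\xi\mapsto|\xi|^{p-2}\xi$. Set $\gamma(t)=(1-t)b+ta$ for $t\in[0,1]$, so that $\gamma'(t)=a-b$, and write
\[
|a|^{p-2}a-|b|^{p-2}b=\int_0^1\frac{d}{dt}\Big(|\gamma(t)|^{p-2}\gamma(t)\Big)\,dt .
\]
This identity is valid away from the at most one value of $t$ for which $\gamma(t)=0$ (which occurs only when $a$ and $b$ point in opposite directions); near such a point $|\gamma(t)|$ vanishes linearly, and since $p-1>0$ the resulting singularity $|\gamma(t)|^{p-2}$ is integrable, so the representation causes no difficulty.

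Next I would compute the integrand after pairing with $a-b$. Differentiating $|\gamma|^{p-2}\gamma$ and taking the inner product with $\gamma'=a-b$ gives
\[
\frac{d}{dt}\Big(|\gamma|^{p-2}\gamma\Big)\cdot(a-b)=|\gamma|^{p-2}|a-b|^2+(p-2)\,|\gamma|^{p-4}\big(\gamma\cdot(a-b)\big)^2 .
\]
The key observation is that $p-2<0$: combining the Cauchy--Schwarz bound $\big(\gamma\cdot(a-b)\big)^2\le|\gamma|^2|a-b|^2$ with the fact that multiplication by the negative number $p-2$ reverses the inequality, one gets
\[
\frac{d}{dt}\Big(|\gamma|^{p-2}\gamma\Big)\cdot(a-b)\ge |\gamma|^{p-2}|a-b|^2+(p-2)|\gamma|^{p-2}|a-b|^2=(p-1)\,|\gamma|^{p-2}|a-b|^2 .
\]
Integrating in $t$ yields $\big(|a|^{p-2}a-|b|^{p-2}b\big)\cdot(a-b)\ge (p-1)\,|a-b|^2\int_0^1|\gamma(t)|^{p-2}\,dt$.

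Finally I would bound the remaining integral from below. Since $\gamma(t)$ is a convex combination of $a$ and $b$, the triangle inequality gives $|\gamma(t)|\le(1-t)|b|+t|a|\le|a|+|b|$ for every $t\in[0,1]$, and because $p-2<0$ this forces $|\gamma(t)|^{p-2}\ge(|a|+|b|)^{p-2}$, hence $\int_0^1|\gamma(t)|^{p-2}\,dt\ge(|a|+|b|)^{p-2}$. Combining this with the previous display gives exactly
\[
\big(|a|^{p-2}a-|b|^{p-2}b\big)\cdot(a-b)\ge (p-1)\,\frac{|a-b|^2}{(|a|+|b|)^{2-p}} .
\]
The only point needing attention is the integrability of the integrand at a zero of $\gamma$, which is harmless as noted; everything else is a direct computation. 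If one prefers to bypass even this minor issue, one can first establish the inequality when the open segment between $a$ and $b$ avoids the origin and then recover the general case by continuity, since both sides are continuous functions of $(a,b)$.
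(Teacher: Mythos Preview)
Your argument is correct. The integral representation via the fundamental theorem of calculus, the pointwise lower bound coming from Cauchy--Schwarz together with $p-2<0$, and the final estimate $|\gamma(t)|\le |a|+|b|$ are all handled cleanly; the remark on integrability when the segment passes through the origin (linear vanishing of $|\gamma(t)|$, exponent $p-2>-1$) is exactly what is needed, and the continuity fallback covers any residual worry.

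By way of comparison: the paper does not actually supply a proof of this lemma. It simply records the inequality and cites it as inequality~(2.3) in \cite{JL09}. Your write-up therefore provides a self-contained proof where the paper defers to the literature. The method you use---differentiating $\xi\mapsto|\xi|^{p-2}\xi$ along the segment and bounding the Hessian-type term---is the standard route to such monotonicity estimates for the $p$-Laplace vector field, so it is presumably close in spirit to what one would find in the cited reference; the value added is that your version is explicit and requires no outside lookup. One small cosmetic point: you may wish to dispose of the degenerate case $a=b=0$ (where the left-hand side is formally $0/0$) with a word, though this is of course trivial.
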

This is inequality $(2.3)$ in \cite{JL09}.

\begin{lem} \label{lemma:sing_ineq_2}
Let $\gamma\geq 2$, $2>p>1$ and $a,b,c,d \in \R$. Then
\[
\begin{split}
    \big(J_\gamma(a-b)&-J_\gamma(c-d)\big)\big(J_p(a-c)-J_p(b-d)\big) \\
    &\geq 4\frac{(\gamma-1)(p-1)}{\gamma^2}\left||a-b|^{\frac{\gamma-2}{2}}(a-b)-|c-d|^{\frac{\gamma-2}{2}}(c-d)\right|^2\big(|a-c|+|b-d|\big)^{p-2}.
\end{split}
\]
\end{lem}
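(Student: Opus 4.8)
The plan is to factor the singular factor $J_p(a-c)-J_p(b-d)$ as a nonnegative multiple of the difference $(a-c)-(b-d)=(a-b)-(c-d)$, bound that multiple from below using the singular monotonicity estimate Lemma~\ref{lemma:sing_ineq_1}, and then treat the remaining factor $\big(J_\gamma(a-b)-J_\gamma(c-d)\big)\big((a-b)-(c-d)\big)$ with the degenerate-type inequality Lemma~\ref{lem:ineq}.

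First I would set $A=a-b$ and $B=c-d$, so that $(a-c)-(b-d)=A-B$, and dispose of the degenerate case $A=B$ (which in particular includes $a=c,\ b=d$): there both sides of the asserted inequality vanish. Assuming $A\neq B$, since $J_p$ is strictly increasing the quantity $\kappa:=\big(J_p(a-c)-J_p(b-d)\big)/(A-B)$ is well defined and $\ge 0$, and $J_p(a-c)-J_p(b-d)=\kappa\,(A-B)$. Note also that $|a-c|+|b-d|>0$ in this case, since $a=c$ and $b=d$ would force $A=B$. Applying Lemma~\ref{lemma:sing_ineq_1} to the pair $(a-c,\,b-d)$ gives
$$
\kappa(A-B)^2=\big(J_p(a-c)-J_p(b-d)\big)\big((a-c)-(b-d)\big)\ \ge\ (p-1)\,\frac{|A-B|^2}{(|a-c|+|b-d|)^{2-p}},
$$
and dividing by $(A-B)^2>0$ yields $\kappa\ge (p-1)(|a-c|+|b-d|)^{p-2}$.

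Next I would invoke Lemma~\ref{lem:ineq} with its exponents taken to be $\gamma$ and $q=2$ (legitimate for $\gamma>2$, and holding as an identity when $\gamma=2$ since $J_2(t)=t$), which, together with $J_2(A-B)=A-B$, gives
$$
\big(J_\gamma(A)-J_\gamma(B)\big)(A-B)\ \ge\ \frac{4(\gamma-1)}{\gamma^2}\,\Big||A|^{\frac{\gamma-2}{2}}A-|B|^{\frac{\gamma-2}{2}}B\Big|^2 .
$$
Since $J_\gamma$ is increasing, $\big(J_\gamma(A)-J_\gamma(B)\big)(A-B)\ge0$, so multiplying this inequality by $\kappa\ge(p-1)(|a-c|+|b-d|)^{p-2}\ge0$ and using $J_p(a-c)-J_p(b-d)=\kappa(A-B)$ gives
\begin{align*}
\big(J_\gamma(A)-J_\gamma(B)\big)\big(J_p(a-c)-J_p(b-d)\big)&=\kappa\,\big(J_\gamma(A)-J_\gamma(B)\big)(A-B)\\
&\ge \frac{4(\gamma-1)(p-1)}{\gamma^2}\,\Big||A|^{\frac{\gamma-2}{2}}A-|B|^{\frac{\gamma-2}{2}}B\Big|^2\big(|a-c|+|b-d|\big)^{p-2},
\end{align*}
which is exactly the assertion after substituting back $A=a-b$ and $B=c-d$.

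I do not expect a real obstacle: the factorization is the heart of the matter. The only care needed is the bookkeeping of the degenerate cases $A=B$ and $a=c,\,b=d$ (where the claimed right-hand side must be read as $0$, since it carries the factor $\big||A|^{\frac{\gamma-2}{2}}A-|B|^{\frac{\gamma-2}{2}}B\big|^2$, which then vanishes), together with the remark that Lemma~\ref{lem:ineq}, although stated for exponent strictly greater than $2$, persists — as an equality — at the endpoint $\gamma=2$.
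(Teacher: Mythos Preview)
Your proof is correct and follows exactly the route the paper indicates: the paper's proof is the single sentence ``The proof is just a combination of Lemma~\ref{lemma:sing_ineq_1} and Lemma~\ref{lem:ineq},'' and you have written out precisely that combination, applying Lemma~\ref{lemma:sing_ineq_1} to the pair $(a-c,b-d)$ to extract the singular weight $(p-1)(|a-c|+|b-d|)^{p-2}$ and then Lemma~\ref{lem:ineq} with exponents $(\gamma,2)$ to handle $(J_\gamma(A)-J_\gamma(B))(A-B)$. Your bookkeeping of the degenerate case $A=B$ and the endpoint $\gamma=2$ is also correct.
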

\begin{proof}
The proof is just a combination of Lemma \ref{lemma:sing_ineq_1} and Lemma \ref{lem:ineq}.
\end{proof}

\begin{lem} \label{lemma:Msing_ineq_2}
Let $\gamma \geq 2$, $2>p>1$ and $a,b,c,d \in \R$. Then
\[
\begin{split}
    \big(J^M_\gamma(a-b)&-J^M_\gamma(c-d)\big)\big(J_p(a-c)-J_p(b-d)\big) \\
    &\geq 4\frac{(\gamma-1)(p-1)}{\gamma^2}\left||a-b|^{\frac{\gamma-2}{2}}(a-b)_M-|c-d|^{\frac{\gamma-2}{2}}(c-d)_M\right|^2\big(|a-c|+|b-d|\big)^{p-2}.
\end{split}
\]
\end{lem}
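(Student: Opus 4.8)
The plan is to mimic the proof of Lemma~\ref{lemma:sing_ineq_2}---which combines Lemmas~\ref{lemma:sing_ineq_1} and~\ref{lem:ineq}---with the $M$-truncation inserted, in the same spirit in which Lemma~\ref{lem:ineqM} is obtained from Lemma~\ref{lem:ineq2}. Throughout, $(a-b)_M$ and $(c-d)_M$ denote the $M$-truncations, so that by definition $J^M_\gamma(a-b)=J_\gamma\big((a-b)_M\big)$, and the right-hand side of the asserted inequality is read with the truncation applied throughout, i.e.\ with the function $t\mapsto |t_M|^{\frac{\gamma-2}{2}}t_M$ (this is the way it is used in the proof of Proposition~\ref{Prop:Singular}). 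The starting point is the algebraic identity that the two increments appearing in the statement coincide: with $e:=a-b-c+d$ one has $e=(a-b)-(c-d)=(a-c)-(b-d)$.

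First I would apply Lemma~\ref{lem:ineq} with exponent $q=2$ and with $\gamma$ in the role of ``$p$'' to the two \emph{truncated} numbers $(a-b)_M$ and $(c-d)_M$ (for $\gamma=2$ this step is the trivial identity $J_2=\mathrm{id}$ and the inequality below is an equality). Since $J_2$ is the identity this yields
\[
\big((a-b)_M-(c-d)_M\big)\,\Big(J^M_\gamma(a-b)-J^M_\gamma(c-d)\Big)\ \ge\ \frac{4(\gamma-1)}{\gamma^2}\,\Big|\,|(a-b)_M|^{\frac{\gamma-2}{2}}(a-b)_M-|(c-d)_M|^{\frac{\gamma-2}{2}}(c-d)_M\,\Big|^2 .
\]
Next I would apply Lemma~\ref{lemma:sing_ineq_1} to $a-c$ and $b-d$, which, using $(a-c)-(b-d)=e$, gives
\[
e\,\big(J_p(a-c)-J_p(b-d)\big)\ \ge\ (p-1)\,|e|^2\,\big(|a-c|+|b-d|\big)^{p-2}.
\]

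It remains to glue the two estimates along the common increment $e$. Because $J_\gamma$, $J_p$ and $t\mapsto t_M$ are all nondecreasing, the factor $J^M_\gamma(a-b)-J^M_\gamma(c-d)$ has the same sign as $(a-b)_M-(c-d)_M$, the factor $J_p(a-c)-J_p(b-d)$ has the same sign as $e$, and $(a-b)_M-(c-d)_M$ has the same sign as $e$ whenever it does not vanish. Hence, if $(a-b)_M\ne(c-d)_M$, the left-hand side of the lemma equals the product of the absolute values of its two factors; dividing the first displayed inequality by $|(a-b)_M-(c-d)_M|$, the second by $|e|$, and multiplying, one obtains the claim as soon as one knows $|(a-b)_M-(c-d)_M|\le|e|$---which is exactly the fact that $t\mapsto t_M$ is $1$-Lipschitz. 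If instead $(a-b)_M=(c-d)_M$, then the right-hand side vanishes (it depends only on the truncated values) while the left-hand side is nonnegative, so the inequality is trivial.

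The only step requiring genuine care---and the reason one cannot simply ``replace $a-b$ by $(a-b)_M$ everywhere'' in Lemma~\ref{lemma:sing_ineq_2}---is the possibility that the truncated increment $(a-b)_M-(c-d)_M$ vanishes while $e\ne0$; the argument above handles this by noting that in precisely that case the right-hand side of the lemma also vanishes, so nothing is lost. Everything else is routine sign bookkeeping, and no pointwise inequality beyond Lemmas~\ref{lem:ineq} and~\ref{lemma:sing_ineq_1} is needed. Alternatively, one could run the case analysis from the proof of Lemma~\ref{lem:ineqM}, splitting according to whether $a-b$ and $c-d$ lie in $[-M,M]$ and using monotonicity in $a$ (respectively $d$) to clamp the out-of-range argument; this also works but is longer.
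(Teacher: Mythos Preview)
Your argument is correct and cleaner than the paper's. The paper proves this lemma by the very case analysis you mention at the end: splitting into the situations $|c-d|\le M,\ a-b>M$; $|c-d|\le M,\ a-b<-M$; and $c-d<-M,\ a-b>M$ (with the remaining cases covered by symmetry), and in each case using monotonicity of $J_p$ in its first argument to clamp the out-of-range variable back to the threshold before invoking Lemma~\ref{lemma:sing_ineq_2}. Your route instead applies Lemma~\ref{lem:ineq} directly to the truncated pair $(a-b)_M,(c-d)_M$, applies Lemma~\ref{lemma:sing_ineq_1} to $a-c,b-d$, and then glues the two via the sign coherence and the $1$-Lipschitz bound $|(a-b)_M-(c-d)_M|\le|(a-b)-(c-d)|$. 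This avoids the case split entirely and makes transparent why truncation costs nothing: the only place the full increment $e$ enters the right-hand side is through the factor $|a-c|+|b-d|$, which is untouched. Your reading of the right-hand side as built from $t\mapsto|t_M|^{(\gamma-2)/2}t_M$ is also the correct one, consistent with how the lemma is invoked in Proposition~\ref{Prop:Singular}.
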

\begin{proof} By symmetry, it is enough to treat the following cases:\\
\noindent {\bf Case 1: } $|c-d|\leq M, a-b>M$. Then the LHS becomes
$$
\Big(J_p(a-c)-J_p(b-d)\Big)\Big(J_{\gamma}(M)-J_{\gamma}(c-d)\Big).
$$
Note that in this case, both factors are non-negative. Also, since $a>M+b$ we can bound this from below by
\[
\begin{split}
\Big(J_p(M+b-c)&-J_p(b-d)\Big)\Big(J_{\gamma}(M)-J_{\gamma}(c-d)\Big)\\ \geq  &4\frac{(\gamma-1)(p-1)}{\gamma^2}\left||M|^{\frac{\gamma-2}{2}}M-|c-d|^{\frac{\gamma-2}{2}}(c-d)_M\right|^2\big(|a-c|+|b-d|\big)^{p-2},
\end{split}
\]
by Lemma \ref{lemma:sing_ineq_2}. This proves the inequality in this case.

\noindent {\bf Case 2: } $|c-d|\leq M, a-b<-M$. Then the LHS becomes
$$
\Big(J_p(a-c)-J_p(b-d)\Big)\Big(J_{\gamma}(-M)-J_{\gamma}(c-d)\Big).
$$
Note that in this case, both factors are non-positive. Since $a>-M+b$ we can bound this from below by
\[
\begin{split}
\Big(J_p(-M+b-c)&-J_p(b-d)\Big)\Big(J_{\gamma}(-M)-J_{\gamma}(c-d)\Big)\\ &\geq  4\frac{(\gamma-1)(p-1)}{\gamma^2}\left||M|^{\frac{\gamma-2}{2}}(-M)-|c-d|^{\frac{\gamma-2}{2}}(c-d)_M\right|^2\big(|a-c|+|b-d|\big)^{p-2},
\end{split}
\]
by Lemma \ref{lemma:sing_ineq_2}. This proves the inequality in this case.

\noindent {\bf Case 3: } $c-d<- M, a-b> M$. Then the LHS becomes
$$
\Big(J_p(a-c)+J_p(d-b)\Big)\Big(J_{\gamma}(M)+J_{\gamma}(M)\Big).
$$
Using that $a>b+M$ and $d>c+M$ we obtain the lower bound
\[
\begin{split}
\Big(J_p(M+b-c)&+J_p(M+c-b)\Big)\Big(J_{\gamma}(M)+J_{\gamma}(M)\Big)\\ &\geq 4\frac{(\gamma-1)(p-1)}{\gamma^2}\left||M|^{\frac{\gamma-2}{2}}M+|c-d|^{\frac{\gamma-2}{2}}M\right|^2\big(|a-c|+|b-d|\big)^{p-2}
\end{split}
\]
by Lemma \ref{lemma:sing_ineq_2}. This proves the inequality in this case.
\end{proof}

\bibliographystyle{amsrefs}
\bibliography{refnonlocal}

 \bigskip
\paragraph{Acknowledgments:} Erik Lindgren was supported by the Swedish Research Council, 2017-03736. We thank the referee for carefully reviewing this work and for coming with several good points on how to improve our manuscript.
\bigskip

\noindent {\textsf{Feng Li\\  Department of Mathematics\\ Uppsala University\\ Box 480\\
751 06 Uppsala, Sweden}  \\
\textsf{e-mail}: feng.li@math.uu.se\\

\noindent {\textsf{Erik Lindgren\\  Department of Mathematics\\ Uppsala University\\ Box 480\\
751 06 Uppsala, Sweden}  \\
\textsf{e-mail}: erik.lindgren@math.uu.se\\

\end{document}